\documentclass[12pt]{amsart}

\usepackage[all,color]{xy}
\usepackage{pb-diagram}
\usepackage[mathscr]{eucal}
\usepackage{hyperref}
\hypersetup{
    colorlinks=true, 
    linktoc=all,     
    linkcolor=blue,  
}

\usepackage{xcolor}

\usepackage[toc,page]{appendix}
\usepackage{pifont}
\usepackage{combelow} 

\RequirePackage{ifthen,setspace,enumitem,tikz}
\usetikzlibrary{arrows}

\DeclareMathAlphabet{\mathpzc}{OT1}{pzc}{m}{it}

\usepackage{amsfonts}
\usepackage{amsmath}
\usepackage{amssymb}

\newcommand{\tr}{\textnormal{tr}}
\newcommand{\ric}{\textnormal{Ric}}

\newcommand{\RCD}{\textnormal{RCD}}

\DeclareMathOperator{\reg}{reg}

\def\cK{\mathcal{K}}

\def\PSH{\textnormal{PSH}}

\newtheorem{theorem}{Theorem}[section]
\newtheorem{proposition}{Proposition}[section]
\newtheorem{lemma}{Lemma}[section]
\newtheorem{definition}{Definition}[section]
\newtheorem{corollary}{Corollary}[section]

\numberwithin{equation}{section}

\newcommand{\ddc}{i\partial\bar\partial}
\newcommand{\eps}{\epsilon}
\newcommand{\vol}{\operatorname{Vol}}
\newcommand{\EntRic}{\operatorname{Ent}_{\mathrm{Ric}}}

\hypersetup{pdftitle={Ricci entropy, RCD structures and Kahler spaces},
 pdfauthor={Bin Guo, Jian Song and Jacob Sturm}}
\begin{document}
\raggedbottom
\title{Ricci entropy, RCD structures and K\"ahler spaces}
\author{Bin Guo $^*$, Jian Song $^\dagger$ and Jacob Sturm $^{\dagger\dagger}$}
\address{$^*$ Department of Mathematics \& Computer Science, Rutgers University, Newark, NJ 07102}
\email{bguo@rutgers.edu}
\address{$^\dagger$ Department of Mathematics, Rutgers University, Piscataway, NJ 08854}
\email{jiansong@math.rutgers.edu}
\address{$^{\dagger\dagger}$ Department of Mathematics \& Computer Science, Rutgers University, Newark, NJ 07102}
\email{sturm@newark.rutgers.edu}

\thanks{Research supported in part by the National Science Foundation under grants DMS-2203607 and DMS-2505575.}

\begin{abstract}
This paper is the final installment in our series on the geometric
theory of complex Monge-Amp\`ere equations. We study singular
K\"ahler metrics on compact normal K\"ahler spaces with klt
singularities whose volume densities lie in $L^p$ for some $p>1$. We show that a lower bound for the Ricci current in the sense of pluripotential theory is equivalent to a synthetic Ricci lower bound
in the sense of RCD theory. As a consequence, every K\"ahler class on
a compact normal K\"ahler space with klt singularities admits an RCD
structure, which makes differential geometric analysis available on
singular complex spaces. As an application, we show that the
fundamental group of a compact K\"ahler Calabi--Yau space with klt
singularities is almost Abelian. We further establish compactness
results for singular K\"ahler--Einstein spaces.
\end{abstract}
\maketitle

\section{Introduction}

The complex Monge--Amp\`ere equation is one of the fundamental links
between analysis and the geometry of K\"ahler manifolds. Its solutions
determine K\"ahler metrics, but estimates for the solutions do not
immediately give estimates for the associated metric spaces. This
distinction becomes particularly important when the volume measure
degenerates or when the underlying complex space is singular. In such
situations, the potential may remain bounded while the metric admits
no uniform upper bound, and even the finiteness of the diameter, or
the identification of the metric completion with the original complex
space, requires additional arguments.

The present paper is the final installment in our series devoted to
the geometric theory of complex Monge--Amp\`ere equations. Diameter
and Sobolev estimates, H\"older regularity and geometric stability
were developed in the preceding works
\cite{GPSS1,GPSS2,GPSS3,GKSSHolder,GSSStability}.
The main goal of the present paper is to establish the RCD structure
of singular K\"ahler metrics with Ricci curvature bounded below, in
arbitrary dimension. This provides a common framework in which
estimates from nonlinear PDE, comparison geometry on metric measure
spaces and holomorphic methods can be used together. We shall be
particularly interested in its consequences for fundamental groups,
projective embeddings and the stability of the induced distances.

The analytic starting point is Yau's solution of the Calabi conjecture
\cite{Yau}, together with the existence theory of Aubin and Yau for
K\"ahler--Einstein metrics with negative Ricci curvature
\cite{Aubin,Yau}. For a fixed compact K\"ahler manifold $(M,\theta)$,
we consider the complex Monge--Amp\`ere equation
\[
 (\theta+\ddc\varphi)^n=e^{-\psi} \theta^n, \qquad \int_M e^{-\psi} \theta^n=\int_M\theta^n,
 \qquad \sup_M\varphi=0.
\]
In his fundamental work \cite{Kolodziej98}, Ko\l odziej established a
uniform $L^\infty$ bound for $\varphi$ when $e^{-\psi} $ is bounded in $L^p$
for some $p>1$. His subsequent stability theorem \cite{Kolodziej03}
shows that, under a common $L^p$ bound, convergence of the densities
in $L^1$ implies uniform convergence of the normalized potentials.
He further proved the H\"older continuity of the potentials under the
same integrability assumption \cite{Kolodziej08}. These results
provide boundedness, regularity and continuous dependence of weak
solutions. Their extensions to degenerate equations and to singular
varieties, in particular \cite{EGZ,DP}, make it possible to construct
canonical currents on spaces arising in birational geometry.

Each of these three analytic properties has a natural geometric
counterpart. A diameter bound is an $L^\infty$ bound for the distance
function on $M\times M$; a H\"older estimate compares the intrinsic
distance with a fixed background distance; and geometric stability
asks whether nearby volume measures determine nearby distance
functions. All of these questions concern the metric
$\theta+\ddc\varphi$ itself, and hence involve two derivatives of a
potential for which only weak estimates may be available. The
analogy with Ko\l odziej's theory therefore serves as a guide for the
geometric theory, rather than as a formal consequence of the
corresponding estimates for the potentials.

An early step in this direction is the study of the Riemannian
geometry of K\"ahler--Einstein currents in \cite{SongCurrents}.
For projective Calabi--Yau varieties and canonical models with
crepant singularities, the metric completion of the regular locus is
identified there with the underlying variety, and its tangent cones
are studied. In \cite{SongBPF}, this approach leads to an analytic
proof of Kawamata's base point free theorem for smooth projective
manifolds with big and nef canonical bundle, combining the
K\"ahler--Ricci flow, Riemannian convergence and $L^2$ estimates
for holomorphic sections. This illustrates how the geometry of a
singular canonical metric can produce the holomorphic sections needed
to construct the canonical model.

Uniform gradient and diameter estimates for families of geometric
complex Monge--Amp\`ere equations, including collapsing families of
twisted K\"ahler--Einstein metrics, were established by Fu--Guo--Song
in \cite{FGSGeometric}. These estimates
extend the geometric study beyond a single canonical current.
Subsequently, Song--Sturm--Wang \cite{SSWCompactness} developed a
compactness theory for K\"ahler--Einstein manifolds with negative
Ricci curvature and volume bounded above. The limits are complete
metric spaces associated with semi-log canonical models, with the
non-klt locus removed. Combined with their work on the
Weil--Petersson potential \cite{SSWWeilPetersson}, this gives a
metric interpretation of important features of the compactification
of moduli spaces of general type. The appearance of complete
noncompact components is an essential feature of this theory when the
limit leaves the klt locus.

The breakthrough is the surprising removal of curvature assumptions from basic
geometric estimates. Guo--Phong--Song--Sturm established diameter and
Green's function
estimates in \cite{GPSS1}, as well as Sobolev inequalities, heat
kernel bounds and spectral estimates on singular K\"ahler spaces in
\cite{GPSS2}. These estimates rely on integral control of the volume
density together with the nondegeneracy hypotheses stated there, and
do not require any bound on the Ricci curvature. The small degeneracy
assumption was later removed in \cite{GPSS3}. In this way, the
diameter estimate provides a geometric counterpart of the $L^\infty$
estimate, while the Sobolev and heat kernel estimates provide the
analytic tools needed to study the singular metric itself.

Local curvature information provides further geometric control.
Guo--Song \cite{GSLocal} obtained local volume noncollapsing from a
Ricci lower bound on the ball under consideration. More recently,
their work on Nash entropy and Calabi energy \cite{GSCalabi}
connected estimates for the Laplace equation with RCD structures,
under suitable hypotheses on a resolution. The works of
Sz\'ekelyhidi \cite{Szekelyhidi, Szekelyhidi2} and Fu--Guo--Song \cite{FGS}
develop this connection for singular K\"ahler--Einstein metrics and
for more general singular K\"ahler metrics, and the latter
establishes the RCD structure in complex dimension three. The
positivity results of Chen--Chiu--Hallgren--Sz\'ekelyhidi--T\^o--Tong
\cite{CCHSTT} provide another important local ingredient. A common
difficulty in these works is to pass from curvature information on
the smooth locus to a global statement on its metric completion.

One of the analytic contributions of this paper is a natural
improvement of the RCD approximation criterion of Guo--Song
\cite[Proposition 3.1 and Corollary 3.2]{GSCalabi}. Their argument
requires a uniform $L^2$ bound for the negative part of the Ricci
curvature of the smooth approximating metrics. In
Proposition~\ref{prop:analytic}, we replace this curvature hypothesis
by a uniform $L\log L$ bound, formulated as a bound on the Ricci
entropy, within the approximation framework of the present paper.
This condition is weaker even than the $L^{2-\varepsilon}$
extension noted in \cite[Remarks 2.1 and 3.1]{GSCalabi}.
The key observation is that the K\"ahler structure gives an estimate
for the logarithmic gradients of eigenfunctions; a capacity argument,
together with the Ricci lower bound on the target, then yields their
Lipschitz regularity. This allows the RCD argument to proceed without
a uniform quadratic curvature bound along the regularization. We note
that the Ricci entropy used here measures negative Ricci curvature,
and is different from the Nash entropy of the volume density used in
our previous works.

The H\"older and stability parts of the theory complete this
picture. In \cite{GKSSHolder}, Guo--Ko\l odziej--Song--Sturm used
partial $C^0$ estimates and Bergman regularization to obtain uniform
H\"older estimates on polarized manifolds, and proved the H\"older
continuity of K\"ahler--Einstein potentials on smoothable projective
varieties. This connects the intrinsic geometry with the geometry of
projective embeddings. The corresponding geometric stability on a
fixed compact K\"ahler manifold was established by Guo--Song--Sturm
\cite{GSSStability}: under common $L^p$ and Ricci lower bounds, $L^1$
convergence of the volume densities implies uniform convergence of
the distance functions. We extend this result to klt spaces, using
the RCD theorem below and
the uniform distance estimate of \cite{GSSHolder}.

The curvature-dimension theory of Lott--Villani and Sturm
\cite{LottVillani,SturmCD}, together with its Riemannian formulation
by Ambrosio--Gigli--Savar\'e and Erbar--Kuwada--Sturm
\cite{AGS,EKS}, allows a lower bound for the Ricci curvature to be
expressed in terms of a distance and a measure.
Theorem~\ref{thm:main} places the singular K\"ahler metrics
considered here within this theory. In this picture, nonlinear PDE
provides the metric and its analytic estimates, RCD theory provides
global comparison and compactness, and holomorphic sections then
recover the complex and algebraic structure. The applications below
make this interplay concrete: almost nonnegative Ricci curvature
constrains the fundamental group, partial $C^0$ estimates give
uniform projective embeddings, and geometric stability identifies the
distance determined by a weak solution of the complex
Monge--Amp\`ere equation.

We now state our results more precisely.

Let $X$ be a   compact normal K\"ahler space of complex dimension
$n\ge2$ with klt singularities, and write $X_{\reg}$ for its
regular locus. We fix a smooth K\"ahler form $\theta$ on $X$, where
smoothness is understood in the sense of local ambient embeddings, and
a smooth adapted volume measure $\Omega$. By the latter we mean that a
smooth Hermitian metric on the $\mathbb Q$-line bundle $K_X$ has been
fixed. On a log resolution $\pi:Y\to X$ which is an isomorphism over
$ X_{\reg}$, we have
\begin{equation}\label{eq:adapted}
 K_Y=\pi^*K_X+\sum_{i=1}^m a_iE_i,\qquad a_i>-1,
 \qquad \pi^*\Omega=\prod_i|s_i|_{h_i}^{2a_i}\Omega_Y.
\end{equation}
where $\Omega_Y$ is a smooth positive volume form,
$s_i$ is a defining section of $E_i$, and $h_i$ is a smooth Hermitian
metric on $\mathcal O_Y(E_i)$.
The measure $\Omega$ is smooth and positive on $ X_{\reg}$ and has finite
mass on $X$, and its Ricci form $\ric(\Omega)$ is a smooth form on $X$.

\begin{definition}\label{def:class}
For $p>1$ and $K>0$, we define $\cK_{\theta,\Omega}(p,K)$ to be the set
of closed positive currents $\omega\in[\theta]$ with bounded local
potentials whose Monge--Amp\`ere measure satisfies
\begin{equation}\label{eq:class}
 \omega^n=e^{-\psi}\Omega,\qquad
 \|e^{-\psi}\|_{L^p(X,\Omega)}\le K.
\end{equation}
\end{definition}

If $\omega$ is smooth and strictly positive on $X_{\reg}$, we define the metric measure space $(\overline{X}, d, \mu)$ to be the metric completion of $( X_{\reg},\omega, \omega^n)$, i.e.,
$$(\overline{X}, d, \mu) = \overline{(X_{\reg}, \omega, \omega^n)}$$
where $\mu$ is the trivial extension of the volume measure  measure $\omega^n$. By the diameter estimate of \cite{GPSS1}, $(\overline{X}, d, \mu)$ is compact. 

The following is the main theorem of this paper.

\begin{theorem} \label{thm:main}
Let $X$ be an $n$-dimensional   compact normal K\"ahler space with klt singularities, equipped with a smooth
K\"ahler form $\theta$ and a smooth adapted volume measure $\Omega$.
Suppose that $\omega\in\cK_{\theta,\Omega}(p,K)$, for some $p>1$ and
$K>0$, is a smooth K\"ahler metric on $ X_{\reg}$ satisfying
\[
 \ric(\omega)\ge-\omega\qquad\text{on } X_{\reg}.
\]
Then $(\overline X,d,\mu)$ is a compact non-collapsed
$\RCD(-1,2n)$ space. 
\end{theorem}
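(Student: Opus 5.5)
The plan is to realize $(\overline X,d,\mu)$ as a measured Gromov--Hausdorff limit of smooth Kähler manifolds with uniformly controlled geometry, and to verify the RCD condition through the approximation criterion described in the introduction (Proposition~\ref{prop:analytic}). That proposition replaces the $L^2$ bound on negative Ricci curvature in \cite{GSCalabi} by a uniform Ricci entropy ($L\log L$) bound.

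\textbf{Step 1: approximating metrics.} Fix a log resolution $\pi:Y\to X$ as in \eqref{eq:adapted}. Write $\omega=\theta+\ddc\varphi$ with $\varphi$ bounded. On $Y$, consider Kähler classes $[\pi^*\theta+\epsilon\theta_Y]$ and solve
\[
(\pi^*\theta+\epsilon\theta_Y+\ddc\varphi_\epsilon)^n=c_\epsilon\, e^{-\psi_\epsilon}\prod_i(|s_i|^2_{h_i}+\epsilon^2)^{a_i}\Omega_Y ,
\]
where $e^{-\psi_\epsilon}$ is a smooth regularization of $e^{-\psi}$ that is uniformly bounded in $L^p$ and converges in $L^1$. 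The Ricci form of the right-hand side is $\ric(\Omega)$ plus $\ddc\psi_\epsilon$ plus the smoothing of the divisor terms. The hypothesis $\ric(\omega)\ge-\omega$ means that $\ddc\psi\ge -\omega-\ric(\Omega)$ in the current sense on $X_{\reg}$. Since $\psi$ is then quasi-psh on $X_{\reg}$ and bounded above near the singular set by the $L^p$ control, it extends across.

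The regularization should be chosen, for example by Demailly regularization of the quasi-psh function $\psi$ on $Y$ or by a Kähler--Ricci flow smoothing, so that $\ric(\omega_\epsilon)\ge -\omega_\epsilon-\eta_\epsilon$. Here $\eta_\epsilon\ge0$ collects the error terms, coming from the classes $\epsilon\theta_Y$ and from the klt factors with $a_i<0$.

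\textbf{Step 2: uniform estimates.} By \cite{GPSS1,GPSS2}, the uniform $L^p$ density bound gives uniform diameter, Sobolev, heat kernel and noncollapsing bounds for $\omega_\epsilon$. The stability result of \cite{GSSStability,GSSHolder} (uniform distance estimates), together with Kołodziej stability, then gives $(Y,\omega_\epsilon,\omega_\epsilon^n)\to(\overline X,d,\mu)$ in the measured Gromov--Hausdorff sense. This uses that $\omega_\epsilon\to\omega$ smoothly on compact subsets of $X_{\reg}$, which follows from local higher-order estimates given the Ricci lower bound.

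\textbf{Step 3: Ricci entropy.} I must verify that $\int_Y \tr_{\omega_\epsilon}\eta_\epsilon\,\log(1+\tr_{\omega_\epsilon}\eta_\epsilon)\,\omega_\epsilon^n$ is uniformly bounded. The terms from $\epsilon\theta_Y$ have trace bounded in $L^1$ by cohomology, since $\int \epsilon\theta_Y\wedge\omega_\epsilon^{n-1}\to0$. The $L\log L$ improvement should come from a Chern--Lu-type inequality combined with the $L^p$ density bound. The divisor terms contribute $\epsilon^2|\nabla s_i|^2/(|s_i|^2+\epsilon^2)^2$-type forms, whose integrals against $\omega_\epsilon^{n-1}$ are controlled by Lelong-type cohomological computations. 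Once this holds, Proposition~\ref{prop:analytic} yields that the limit is $\RCD(-1,2n)$. Noncollapsing follows because $\mu$ is the $2n$-dimensional Hausdorff measure: the metric is smooth on $X_{\reg}$, whose complement has zero $\mu$-measure and zero capacity. The volume bound $\mu(B_r)\ge cr^{2n}$ from \cite{GSLocal} and Bishop--Gromov then give $\mu=\mathcal H^{2n}$ by the De Philippis--Gigli characterization.

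\textbf{Main obstacle.} I expect Step 3 to be the hardest: constructing the regularization so that the negative part of the Ricci curvature is controlled in $L\log L$ rather than merely in $L^1$, uniformly near the exceptional divisors. My first attempt would be to take $\psi_\epsilon$ to be the Kähler--Ricci flow or heat-flow smoothing. The monotonicity of $\ric+\omega$ along the flow gives a one-sided bound, and the entropy bound would then follow by integrating by parts against $\log\tr$ and using the Sobolev inequality of \cite{GPSS2}.
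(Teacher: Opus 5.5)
Your plan to feed a uniform Ricci-entropy bound into Proposition~\ref{prop:analytic} matches the paper's strategy in the smooth-twist case. However, the decisive Step~3 is left as a hope, and the mechanism you propose does not reach the actual difficulty. Cohomological control of $\int_Y \tr_{\omega_\eps}\eta_\eps\,\omega_\eps^n$ only gives an $L^1$ bound. In the paper the logarithm is handled pointwise. A barrier/trace estimate gives $\tr_{\omega_\eps}\theta_Y\le C\prod_i|s_i|_{h_i}^{-2N}$ (Lemma~\ref{lem:inverse}; a Chern--Lu argument with the barrier \eqref{eq:barrier} is in this spirit). Hence $\log(1+R_\eps)\le C\bigl(1+\sum_k(-\log|s_k|_{h_k}^2)\bigr)$, and via $(\tr_{\omega_\eps}\eta)\,\omega_\eps^n=n\,\eta\wedge\omega_\eps^{n-1}$ the entropy reduces to log-weighted intersection integrals. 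The spike forms $P_{i,\eps}$ come from the \emph{positive} discrepancies $a_i>0$; the negative ones contribute with a good sign, contrary to what you wrote. For these spikes, Poincar\'e--Lelong produces the term $2\pi|\log\eps|\int_{E_k}\alpha_\eps^{n-1}$, whose factor $|\log\eps|$ diverges. It is harmless only because $\pi(E_k)$ has complex codimension at least two in the normal space $X$, so that $\int_{E_k}\alpha_\eps^{n-1}=O(\eps)$ (Lemma~\ref{lem:exceptionalmass}). This cancellation between $|\log\eps|$ and codimension two is the heart of the entropy bound. Nothing in your plan (K\"ahler--Ricci flow smoothing, integration by parts against $\log\tr$, the Sobolev inequality) produces it.

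Step~1 also does not set up a twist for which such a bound can hold. Since $\ric(\omega)=\ric(\Omega)+\ddc\psi$, the hypothesis says $\ddc q\ge-\theta-\ric(\Omega)$ for $q=\varphi+\psi$. It is $q$, not $\psi$, that is quasi-plurisubharmonic. Regularizing $\psi$ alone gives no lower bound for $\ddc\psi_\eps$ in terms of $\omega_\eps$. Your equation also lacks the factor $e^{\varphi_\eps}$ (the paper's $e^{u_\eps}$ in \eqref{eq:approxMA}) that produces the term $-\omega_\eps$ in $\ric(\omega_\eps)$. Suppose you correct the equation to $\omega_\eps^n=e^{\varphi_\eps-q_\eps}\prod_i(|s_i|_{h_i}^2+\eps)^{a_i}\Omega_Y$. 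A nonsmooth $q$ may still tend to $-\infty$ logarithmically at $X_{\mathrm{sing}}$; conical behaviour at an isolated singularity is not excluded by the hypotheses. Then $\pi^*q$ can have positive Lelong number along some $E_i$ with $a_i>0$. In $L^p(X,\Omega)$ this pole is compensated by the factor $|s_i|_{h_i}^{2a_i}$, but not by $(|s_i|_{h_i}^2+\eps)^{a_i}$. So the uniform $L^r$ density bound in \eqref{eq:approximation} is no longer automatic, and your Step~2 and Proposition~\ref{prop:analytic} both rest on it. It could at best be rescued by carefully coupling the two regularization scales, which you do not do.

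The paper avoids this by splitting the proof into two steps.
\begin{enumerate}
\item In Proposition~\ref{prop:smoothtwist}, the twist $F=\varphi+\psi$ extends smoothly across $X_{\mathrm{sing}}$ (pluriharmonic extension on normal spaces), so only the divisor factors are regularized on $Y$.
\item The general case approximates $\omega$ on $X$ by smooth-twist metrics with $\ric(\omega_j)\ge-A\omega_j$ and densities dominated by a fixed $H\in L^{p_0}(X,\Omega)$ (Lemma~\ref{lem:kahlerapprox}). The $\RCD(-A,2n)$ gradient estimates of the $\omega_j$ then show that the target's eigenfunctions are Lipschitz (Lemma~\ref{lem:singularspectral}). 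Finally, Honda's criterion (Proposition~\ref{thm:criterion}) is applied to $\omega$ itself with its bound $-1$.
\end{enumerate}

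Your appeal to \cite{GSSStability,GSSHolder} for measured Gromov--Hausdorff convergence is not needed, since Proposition~\ref{prop:analytic} uses only local smooth and spectral convergence on $X_{\reg}$. It is also not available: the approximants have no uniform Ricci lower bound, and this paper's klt extension of those stability results itself relies on Theorem~\ref{thm:main}.
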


As noted in \cite{FGS}, for any K\"ahler class on $X$, one can construct infinitely many singular K\"ahler metrics satisfying the assumptions in Theorem \ref{thm:main}. The Ricci curvature $\ric(\omega)$ on $X_{\reg}$ uniquely extends to a global current that is bounded below by $-\omega$ in the sense of pluripotential theory. Theorem \ref{thm:main} essentially establishes the equivalence between a lower bound of Ricci current and the synthetic Ricci lower bound in RCD theory.  One can further construct singular K\"ahler metric with Ricci current bounded both below and above. The following corollary follows directly from Theorem \ref{thm:main} combined with results in \cite{Szekelyhidi, FGS, Szekelyhidi2}.  

\begin{corollary}  \label{cor:main}
Under the hypotheses of Theorem~\ref{thm:main}, $(\overline X,d)$ is
homeomorphic to $X$, and its metric regular locus satisfies
$\mathcal R(\overline X)= X_{\reg}$.
In particular, the metric singular set
$\mathcal S(\overline X)$ satisfies
\begin{equation}\label{eq:cor-codim-three}
 \dim_{\mathcal H,d}\mathcal S(\overline X)\le2n-3.
\end{equation}
If, in addition, $\ric(\omega)\le \omega$ on $ X_{\reg}$, then
\begin{equation}\label{eq:cor-codim-four}
 \dim_{\mathcal H,d}\mathcal S(\overline X)\le2n-4.
\end{equation}
\end{corollary}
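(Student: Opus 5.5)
The plan is to combine Theorem~\ref{thm:main} with the structure theory of non-collapsed RCD spaces and the known results of \cite{Szekelyhidi, FGS, Szekelyhidi2}, in four steps.

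\textbf{Homeomorphism.} By Theorem~\ref{thm:main}, $(\overline X,d,\mu)$ is a compact non-collapsed $\RCD(-1,2n)$ space. The identity on $X_{\reg}$ should extend to a homeomorphism $\overline X\to X$. For this I would use the H\"older-type comparison between $d$ and a background distance $d_\theta$ from the local ambient embeddings. One direction is $d_\theta\le C d$ on $X_{\reg}$, since $\omega\ge c\theta$ is not available a priori. I would argue instead via the uniform distance estimates of \cite{GSSHolder}, or via the Green's function/diameter estimates of \cite{GPSS1}, which give $d(x,y)\le C\,d_\theta(x,y)^\alpha$. Hence the identity extends to a continuous surjection $\overline X\to X$. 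Injectivity is the point where the RCD structure enters, as in \cite{FGS, Szekelyhidi}. Distinct points of $\overline X$ over the same point of $X$ would give two metric points separated by a positive distance while being limits of the same sequence in $X_{\reg}$. I would rule this out by non-collapsing: volume cone rigidity and the local Sobolev/heat kernel bounds let one separate points by $d$-Lipschitz functions that are bounded plurisubharmonic-type or holomorphic in local charts. These are continuous on $X$ by the $L^\infty$ and H\"older theory, so the two points would have to coincide. A continuous bijection from a compact space to a Hausdorff space is a homeomorphism.

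\textbf{Regular set.} Points of $X_{\reg}$ have a smooth metric neighborhood, so their tangent cones are $\mathbb R^{2n}$; hence $X_{\reg}\subset\mathcal R(\overline X)$. For the converse, I would take $x\in X_{\sing}$ and suppose some tangent cone there is Euclidean. Volume convergence for non-collapsed RCD spaces then gives $\epsilon$-regularity: a neighborhood is bi-H\"older to a ball in $\mathbb C^n$, with almost-holomorphic coordinates built from harmonic functions via the Kähler $L^2$ techniques of \cite{Szekelyhidi, FGS}. This would make the local ring of $X$ at $x$ regular, contradicting singularity. I expect this step to be the main obstacle. It requires constructing genuine holomorphic charts at $x$ from metric regularity, i.e.\ a local partial $C^0$ / H\"ormander $L^2$ argument on the RCD space. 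Here I would invoke directly the corresponding statements in \cite{FGS, Szekelyhidi2}, which apply once the RCD property is known.

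\textbf{Codimension bounds.} In a non-collapsed RCD space, $\dim_{\mathcal H}\mathcal S\le 2n-2$. The Kähler structure improves this: a tangent cone of the form $\mathbb R^{2n-2}\times C(S^1_\beta)$ with $\beta<2\pi$ carries a complex structure compatible with the limit. By the Kähler tangent cone analysis in \cite{Szekelyhidi, FGS}, this forces codimension-two singular strata to be absent. Since $\mathcal S=X_{\sing}$ and $d$ is bi-H\"older-comparable along strata, we get $\dim_{\mathcal H,d}\mathcal S\le 2n-3$, which is \eqref{eq:cor-codim-three}. If in addition $\ric(\omega)\le\omega$, we have two-sided Ricci bounds on $X_{\reg}$. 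The Cheeger--Naber codimension-four argument, in the form adapted to singular Kähler spaces in \cite{Szekelyhidi2}, then excludes codimension-three cones $\mathbb R^{2n-3}\times C(Z)$, giving \eqref{eq:cor-codim-four}.
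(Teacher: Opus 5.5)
\paragraph{The gap: the homeomorphism step.} Your identification of the regular set and your treatment of the strata both go through this homeomorphism, so the gap affects the rest of the proof as well.

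You set aside $\omega\ge c\theta$ as unavailable and use the H\"older bound $d\le C d_\theta^{\alpha}$ instead. That inequality makes the identity $(X_{\reg},d_\theta)\to(X_{\reg},d)$ uniformly continuous. It therefore extends to a continuous surjection $\Phi\colon X\to\overline X$, not to a map $\overline X\to X$: without control in the other direction, a $d$-Cauchy sequence in $X_{\reg}$ need not be $d_\theta$-Cauchy. Once $\Phi$ exists, the situation you try to exclude, several points of $\overline X$ over one point of $X$, cannot occur.

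The real issue is then the injectivity of $\Phi$: distinct points of $X$ (for instance, points of a positive-dimensional component of $X_{\mathrm{sing}}$, or of a subvariety the current might contract) must not lie at $d$-distance zero. Your separation argument does not address this. Functions or sections built intrinsically on $\overline X$ cannot distinguish two points of $X$ that $\Phi$ has already identified. The natural separating functions, the ambient coordinates, are $d$-Lipschitz exactly when $|\partial z|_\omega$ is bounded, that is, when $\omega\ge c\theta$. Note also that non-collapsing, volume cone rigidity and Sobolev or heat kernel bounds do not by themselves produce holomorphic separating functions.

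The missing ingredient is the lower bound $\omega\ge c\theta$, and it is available. The paper takes it first, from \cite[Theorem 1.2]{CCHSTT}, using $\ric(\omega)\ge-\omega$, bounded potentials and the $L^p$ density. This gives $d_\theta\le Cd$, hence a Lipschitz surjection $\iota\colon\overline X\to X$ with $\iota^{-1}(X_{\reg})=X_{\reg}$. Injectivity of $\iota$ is then proved by point separation on $\overline X$:
\begin{itemize}
\item One constructs Gaussian and peak sections of the local polarization $U_2\times\mathbb C$ with weight $e^{-\phi_U}$, using H\"ormander's estimate (positivity from $k\omega+\ric(\omega)\ge(k-1)\omega$) and the holomorphic charts of \cite{LiuSzekelyhidi}.
\item Such a section extends holomorphically across $X_{\mathrm{sing}}$ by normality.
\item Its $d$-continuous extension equals that holomorphic extension composed with $\iota$.
\item Hence it cannot separate two points with the same image, which rules out splitting.
\end{itemize}

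\paragraph{The remaining steps.} Once the homeomorphism is established, your remaining steps are broadly parallel to the paper's. Your regular-set argument, via $\epsilon$-regularity and holomorphic charts, is close to what the paper does with \cite{LiuSzekelyhidi}. The paper, however, proves the stronger uniform gap $\nu_Z\le1-\epsilon_X$ on $\Gamma$, using the finite order and three-annulus arguments of \cite{Szekelyhidi}, and then feeds this gap into the cone exclusion. Your codimension-four step matches the paper's use of \cite{CCT} and \cite[Lemma 6.6]{FGS}.

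The bound $\dim_{\mathcal H,d}\mathcal S\le2n-3$, though, should come from the intrinsic stratification of noncollapsed RCD spaces \cite[Theorem 1.8]{DePhilippisGigli}, after excluding cones $\mathbb R^{2n-2}\times C(S^1_\gamma)$ at points of $\Gamma$. It cannot come from a comparison with $d_\theta$:
\begin{itemize}
\item the estimate $d\le Cd_\theta^\alpha$ only gives $\dim_{\mathcal H,d}\le\alpha^{-1}\dim_{\mathcal H,d_\theta}$, which loses a factor;
\item the estimate $d_\theta\le Cd$ bounds dimensions in the wrong direction.
\end{itemize}
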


In Corollary \ref{cor:main}, the identity map on $ X_{\reg}$ in $\overline{X}$ to itself extends to a homeomorphism $\overline X\simeq X$. We emphasize that Corollary~\ref{cor:main} requires neither
rationality nor projectivity. Its proof, given in
Section~\ref{subsec:local-singularities}, relies on the K\"ahler
current estimate of \cite[Theorem 1.2]{CCHSTT}, local weighted
holomorphic sections, and the holomorphic charts of
\cite{LiuSzekelyhidi} for metrics with Ricci curvature bounded below.
These local ingredients allow us to adapt the density gap and cone
exclusion arguments of \cite{Szekelyhidi,FGS}.

 Immediately, we have the following corollary for singular K\"ahler-Einstein spaces. 

\begin{corollary}\label{cor:ke}
Let $X$ be an $n$-dimensional   compact normal K\"ahler space with klt singularities. Let $\omega\in[\theta]$
be a closed positive current with bounded local potentials whose
restriction to $ X_{\reg}$ is a smooth K\"ahler metric satisfying
\[
 \ric(\omega)=\lambda\omega\qquad\text{on } X_{\reg},
 \qquad \lambda\in\mathbb R.
\]
Then its metric completion $
 (\overline X,d, \mu )$ 
is a compact noncollapsed $\RCD(\lambda,2n)$ space hommeomorphic to $X$. 
\end{corollary}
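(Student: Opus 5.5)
Corollary~\ref{cor:ke} reduces to Theorem~\ref{thm:main} and Corollary~\ref{cor:main} after two steps: verify that $\omega$ lies in some class $\cK_{\theta,\Omega}(p,K)$, then rescale to turn the Einstein constant into the normalization $-1$.

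\textbf{Step 1: the Einstein equation gives an $L^p$ density.} Write $\omega=\theta+\ddc\varphi$ with $\varphi$ bounded, and $\omega^n=e^{-\psi}\Omega$ on $X_{\reg}$. Since $\ric(\Omega)$ is a smooth form on $X$, the Einstein equation reads
\[
\ddc\psi = \lambda\omega-\ric(\Omega) \quad\text{on } X_{\reg}.
\]
Hence $\psi-\lambda\varphi$ solves $\ddc(\psi-\lambda\varphi)=\lambda\theta-\ric(\Omega)$, which is a smooth form on $X$.

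First I would show that $\omega^n$ puts no mass on $X_{\mathrm{sing}}$. This holds because $\omega$ has bounded potentials, and non-pluripolar Monge--Amp\`ere measures of bounded potentials charge no pluripolar sets. Consequently $\omega^n=e^{-\psi}\Omega$ globally, and $\psi$ is locally the sum of a smooth function and a bounded function on $X_{\reg}$.

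Next I would show that $\psi$ is bounded. On a log resolution, the function $u=\psi-\lambda\varphi$ satisfies $\ddc u=\pi^*(\text{smooth form})$ on $\pi^{-1}(X_{\reg})$. The function $u$ is quasi-psh and quasi-psh from below up to constants; more precisely, $\pm u$ is $C\theta_Y$-psh off a divisor. Since $e^{-u}\in L^1$ (this follows from finite mass and bounded $\varphi$), $u$ is bounded above off the divisor, so $u$ extends as a quasi-psh function. Then $\ddc u=$ (smooth) $+$ (a positive current supported on $E$). Taking the class of $\ddc u$, which is zero, shows that the divisorial part vanishes. So $u$ is smooth up to a pluriharmonic function on a compact manifold, hence smooth and bounded.

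It follows that $e^{-\psi}\le C$, which gives $\|e^{-\psi}\|_{L^p(\Omega)}\le K$ for every $p>1$.

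\textbf{Step 2: rescaling to $\RCD(\lambda,2n)$.} If $\lambda\ge -1$, then $\ric(\omega)\ge-\omega$ directly. For $\lambda<-1$, replace $\omega$ by $|\lambda|\omega$, which lies in $\cK_{|\lambda|\theta,\Omega}(p,|\lambda|^nK)$ and satisfies $\ric=-|\lambda|^{-1}(|\lambda|\omega)\ge -(|\lambda|\omega)$. In either case Theorem~\ref{thm:main} gives a compact noncollapsed $\RCD(-1,2n)$ space, and Corollary~\ref{cor:main} gives the homeomorphism with $X$.

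To upgrade the bound to $\RCD(\lambda,2n)$ with the sharp $\lambda$, I would use that $\ric(\omega)=\lambda\omega$ holds exactly on $X_{\reg}$, together with the fact that $\mathcal S(\overline X)$ has codimension at least $4$ by the two-sided bound in Corollary~\ref{cor:main}. On an $\RCD(-1,2n)$ space whose regular part is an open smooth manifold of full measure with singular set of codimension $>2$, the Bakry--\'Emery inequality $\Gamma_2(f)\ge\lambda|\nabla f|^2+\frac1{2n}(\Delta f)^2$ can be checked with test functions cut off near $\mathcal S$. The capacity of $\mathcal S$ is zero, so the cutoff errors vanish. Equivalently, I would rerun the proof of Theorem~\ref{thm:main} with $-1$ replaced by $\lambda$, since that argument only uses the pointwise Ricci lower bound on $X_{\reg}$.

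\textbf{Main obstacle.} The hardest step is the boundedness of $\psi$ in Step 1, namely excluding a divisorial part of $\ddc u$ on the exceptional divisors. The plan is to compare with $\pi^*\Omega$ via \eqref{eq:adapted}, using that the Einstein equation makes $u$ pluriharmonic modulo smooth forms across the resolution.
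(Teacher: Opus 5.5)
Your reduction is what the paper has in mind when it calls Corollary~\ref{cor:ke} immediate. You put $\omega$ into some $\cK_{\theta,\Omega}(p,K)$, rescale, apply Theorem~\ref{thm:main} and Corollary~\ref{cor:main}, and get the sharp constant from Honda's criterion (Proposition~\ref{thm:criterion}) with $\ric(g)\ge\lambda g$ on $X_{\reg}$. The eigenfunctions are already Lipschitz by the $\RCD(-1,2n)$ conclusion, so Step~2 is fine.

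The gap is in Step~1. The inference ``$e^{-u}\in L^1$, hence $u$ is bounded above off the divisor'' runs in the wrong direction. Integrability of $e^{-u}$ controls $-u$ from above: via the sub-mean-value inequality for the quasi-psh function $-u$, it gives only $-u\le C-N\log\mathrm{dist}(\cdot,E)$. An upper bound for $u=\psi-\lambda\varphi$, by contrast, is a pointwise lower bound $\omega^n\ge c\,\Omega$, which is not available a priori. Without growth control, a function that is pluriharmonic modulo a smooth form on $Y\setminus E$ need not extend across the divisor $E$ (e.g.\ $\mathrm{Re}(1/z_1)$), so your quasi-psh extension is unproved. Moreover, suppose you had $\ddc u=\pi^*\eta+\sum_ic_i[E_i]$ with $c_i\ge0$. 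The vanishing of the class of $\ddc u$ on $Y$ only gives $\sum_ic_i\{E_i\}=-\pi^*\{\eta\}$. Since $\{\eta\}$ is not known to vanish on $X$ before the extension is proved, you need the negativity lemma (an exceptional divisor numerically trivial on the fibres of $\pi$ is zero) to conclude $c_i=0$.

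The missing idea is to work on $X$ rather than on the resolution. On $Y$ the exceptional set is a divisor, whereas on $X$ normality makes $X_{\rm sing}$ of codimension at least two. This is the argument at the beginning of Section~\ref{sec:approximation}, given there for $\varphi+\psi$ under the twisted equation. Locally $\lambda\theta-\ric(\Omega)=\ddc v$ with $v$ smooth, so $\psi-\lambda\varphi-v$ is pluriharmonic on $U\cap X_{\reg}$. By the extension result for pluriharmonic functions on normal spaces cited there, it extends pluriharmonically across $X_{\rm sing}$ with no growth hypothesis. Hence $F=\psi-\lambda\varphi\in C^\infty(X)$, and $e^{-\psi}=e^{-\lambda\varphi-F}$ is bounded above and below. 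Thus $\omega\in\cK_{\theta,\Omega}(p,K)$ for every $p>1$.

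If you prefer to stay on $Y$, your route can be completed as follows. Apply the $L^1$ bound to $-u$ to get logarithmic growth near $E$. Conclude that $\ddc u=\pi^*\eta+\sum_id_i[E_i]$ with real $d_i$ of unknown sign. Then kill the $d_i$ with the negativity lemma.
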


Corollary~\ref{cor:ke} identifies the topological and holomorphic
structures of the K\"ahler--Einstein completion. The normal complex
structure of $X$ extends the given complex structure on the metric
regular locus, and the canonical identification is biholomorphic
with respect to this structure. In particular, the corollary applies
to K\"ahler--Einstein currents on klt log Fano varieties with ample
$-K_X$, and to the canonical K\"ahler--Einstein currents on canonical
models of general type, with Einstein constants normalized to $1$
and $-1$, respectively; see \cite{EGZ,SongCurrents,Szekelyhidi}.
It extends the three-dimensional K\"ahler--Einstein conclusion of
\cite[Corollary 1.1]{FGS} to arbitrary dimension.

More generally, we can assign canonical distances to currents that
need not be smooth on $ X_{\reg}$. Using the uniform H\"older estimate and the geometric stability results of Guo--Song--Sturm \cite{GKSSHolder, GSSHolder, GSSStability} extend to compact
normal klt K\"ahler spaces.  

We keep the background data $(X,\theta,\Omega)$
fixed, set $V=\int_X\theta^n$, and let $d_\theta$ be the
background length distance on $X$. 
For $p>1$, $K>0$ and $\Lambda\ge0$, we consider the family
\[
 \mathscr M(p,K,\Lambda)
 =\{\eta\in\cK_{\theta,\Omega}(p,K):
       \eta|_{ X_{\reg}}\text{ is smooth K\"ahler},\quad
       \ric(\eta)\ge-\Lambda\eta\text{ on } X_{\reg}\}.
\]
Theorem~\ref{thm:main} and Corollary~\ref{cor:main}  show that each member of this family has a
compact noncollapsed $\RCD(-\Lambda,2n)$ completion.

For $\lambda>0$, we define the following class of possibly nonsmooth
currents:
\begin{equation}\label{eq:rough-stability-class}
 \cK_{\theta,\Omega}(p,K;\lambda)
 =\left\{\omega\in\cK_{\theta,\Omega}(p,K):
 f_\omega:=-\log\frac{\omega^n}{\Omega}
       \in\operatorname{PSH}(X,\lambda\theta)\right\}.
\end{equation}
The condition on $f_\omega$ means that the logarithmic density
admits a $\lambda\theta$-plurisubharmonic representative.
In particular, the currents in this class are not required to be
smooth on $ X_{\reg}$. Then the stability theorem [Theorem 1.2] \cite{GSSStability} immediately extends to the following theorem.

\begin{theorem} 
\label{thm:canonical-distance}
For every $\omega\in\cK_{\theta,\Omega}(p,K;\lambda)$
there exists a canonical distance $d_\omega$ on $X$ such that $ (X,d_\omega,\omega^n)$ is a compact noncollapsed $\RCD(-\Lambda,2n)$-space
for some $\Lambda$ that depends only on
$X,\theta,\Omega,p,K,\lambda$. Furthermore, the identity map
from $(X,d_\theta)$ to $(X,d_\omega)$ is a Lipschitz homeomorphism whose inverse is Holder continuous. 
\end{theorem}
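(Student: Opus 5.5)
The plan is to approximate $\omega$ by currents from the class $\mathscr M(p',K',\Lambda)$ to which Theorem~\ref{thm:main} and Corollary~\ref{cor:main} apply. Uniform estimates and RCD stability then produce the limiting distance, and the geometric stability theorem identifies it independently of the approximation.

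\textbf{Step 1 (regularization).} Write $\omega=\theta+\ddc\varphi$ and $f=f_\omega\in\PSH(X,\lambda\theta)$. Since $f$ is quasi-psh, $e^{-f}$ is bounded above, which gives a uniform upper bound on the density $\omega^n/\Omega$. Using Demailly-type regularization on a log resolution, pushed down to $X$ (or directly on $X$ via the klt structure), I will pick smooth $f_j\in\PSH(X,(\lambda+1)\theta)$ on $X_{\reg}$ with $f_j\downarrow f$. After normalizing $c_j$, the densities $e^{-f_j-c_j}$ converge to $e^{-f}$ in $L^1(\Omega)$ and stay bounded in $L^{p}$. Next I solve
\[
(\theta+\ddc\varphi_j)^n=e^{-f_j-c_j}\Omega
\]
by \cite{EGZ}. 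Regularity on $X_{\reg}$ follows from standard local estimates once $f_j$ is smooth there; I may need an $\epsilon$-perturbation of the Kähler class to get strict positivity and then let $\epsilon\to0$. On $X_{\reg}$ the Ricci form is
\[
\ric(\omega_j)=\ric(\Omega)+\ddc f_j\ge -C\theta-(\lambda+1)\theta .
\]
To convert this into $\ric(\omega_j)\ge-\Lambda\omega_j$ I need $\omega_j\ge c\theta$. This is the step I expect to be the main obstacle. My first attempt will be a Chern--Lu/Aubin--Yau $C^2$ lower bound for the identity map $(X_{\reg},\omega_j)\to(X,\theta)$: the Ricci lower bound of $\omega_j$ in terms of $\theta$, the bounded holomorphic bisectional curvature of $\theta$ in local embeddings, and the uniform $C^0$ bound on $\varphi_j$ from Kołodziej's estimate together give $\tr_{\omega_j}\theta\le C$. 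The boundary issues at the singular set will be handled via the log resolution and the barrier $\epsilon\log|s|^2$. This yields $\omega_j\in\mathscr M(p,K',\Lambda)$ with $\Lambda=\Lambda(X,\theta,\Omega,p,K,\lambda)$ and uniform $\omega_j\ge c\theta$.

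\textbf{Step 2 (uniform geometry and limits).} By Theorem~\ref{thm:main} each $(X,d_j,\omega_j^n)$ is a noncollapsed $\RCD(-\Lambda,2n)$ space homeomorphic to $X$ (Corollary~\ref{cor:main}). The bound $\omega_j\ge c\theta$ makes the identity $(X,d_\theta)\to(X,d_j)$ uniformly Lipschitz in the reversed sense $d_j\ge c\,d_\theta$; this needs care, and I will get it from length minimization on $X_{\reg}$, using that the singular set has codimension at least $2$. The uniform Hölder estimate $d_j\le C d_\theta^\alpha$ from \cite{GSSHolder} (extended to klt spaces using the diameter and Sobolev estimates of \cite{GPSS1,GPSS2,GPSS3}) supplies the other direction. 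By Arzelà--Ascoli, $d_j\to d_\omega$ uniformly on $X\times X$ along a subsequence, with $c\,d_\theta\le d_\omega\le Cd_\theta^\alpha$. Kołodziej stability gives $\varphi_j\to\varphi$ uniformly, hence $\omega_j^n\to\omega^n$ weakly. Stability of the RCD condition under measured Gromov--Hausdorff convergence, together with volume convergence for the noncollapsed property, shows that $(X,d_\omega,\omega^n)$ is a noncollapsed $\RCD(-\Lambda,2n)$ space. The two-sided distance comparison gives the Lipschitz homeomorphism with Hölder inverse as stated.

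\textbf{Step 3 (canonicity).} To show $d_\omega$ does not depend on the subsequence or the regularization, I will take two approximating sequences. The $L^1$ convergence of their densities, under common $L^p$ and Ricci lower bounds, implies via the klt extension of \cite[Theorem 1.2]{GSSStability} that their distances converge to each other uniformly. This extension is obtained by running that argument with Theorem~\ref{thm:main} supplying the RCD structure in place of smoothness. Hence the limit $d_\omega$ is unique, and in particular it equals the completion distance whenever $\omega$ is already smooth on $X_{\reg}$.
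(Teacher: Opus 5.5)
There is a genuine gap, and it is the step you flag yourself: the uniform K\"ahler current bound $\omega_j\ge c\theta$. The rest of your architecture is the paper's. You regularize $f_\omega$ from above by smooth quasi-psh $f_j$ and solve $\omega_j^n=a_je^{-f_j}\Omega$. You then convert $\ric(\omega_j)=\ric(\Omega)+\ddc f_j\ge-C\theta$ into $\ric(\omega_j)\ge-\Lambda\omega_j$ using that lower bound, apply Theorem~\ref{thm:main} and Corollary~\ref{cor:main}, and get uniqueness from the klt version of \cite{GSSStability}.

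The paper does not prove $\omega_j\ge c\theta$; it takes it from \cite[Theorem 1.2]{CCHSTT}. Your Chern--Lu sketch does not substitute for that theorem. On $X_{\reg}$ you do get $\Delta_{\omega_j}(\log\tr_{\omega_j}\theta-K\varphi_j)\ge\tr_{\omega_j}\theta-C$. But to use a barrier $\delta\chi$, with $\chi=\sum_ib_i\log|s_i|^2_{h_i}-C_0$, and then let $\delta\to0$, you must already know that $\log\tr_{\omega_j}\pi^*\theta=o(-\log|s|^2)$ near $E$. A priori you only have a polynomial bound of the type in Lemma~\ref{lem:inverse}, $\tr_{\omega_j}\theta_Y\le C\prod_i|s_i|_{h_i}^{-2N}$, with some large $N$. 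With a barrier coefficient of order $N$, the maximum principle only reproduces a bound of the same polynomial type, never a uniform one.

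If you instead run the maximum principle on smooth approximations on the resolution, where it is harmless, a different problem appears. Regularizing $\prod_i|s_i|^{2a_i}_{h_i}$ produces the terms $-a_iP_{i,\eps}$ for the positive discrepancies, cf.\ \eqref{eq:riccispike}. These converge to $-2\pi a_i[E_i]$. On $\{|s_i|^2_{h_i}\sim\eps\}$ they have size $\eps^{-1}$ in the normal direction, while $\pi^*\theta$ stays bounded there. A uniform bound $a_iP_{i,\eps}\le C(\omega_\eps+\pi^*\theta)$ would therefore require $\omega_\eps$ to be of size $\eps^{-1}$ there, and nothing gives you that. So the Chern--Lu inequality is not uniform in $\eps$.

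This is the same obstruction that forces the paper to use Ricci entropy rather than pointwise bounds in Section~\ref{sec:bounding}. Overcoming it for a pointwise lower bound on the metric is exactly what \cite[Theorem 1.2]{CCHSTT} supplies. Without it you do not know $\omega_j\in\mathscr M(p,K',\Lambda)$, and Theorem~\ref{thm:main} cannot be invoked.

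Smaller points:

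\textbf{Sign slip.} Since $f_\omega$ is quasi-psh it is bounded above, so $e^{-f_\omega}$ is bounded \emph{below}, not above. What you need, and what the paper uses, is the domination $e^{-f_j}\le e^{-f_\omega}\in L^p$ coming from $f_j\ge f_\omega$. This gives the common $L^p$ bound and convergence in total variation.

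\textbf{Regularization.} The paper regularizes directly on $X$ by \cite[Corollary 2.5]{EGZApprox}, with $f_j\in C^\infty(X)\cap\PSH(X,\lambda\theta)$. A regularization of $\pi^*f_\omega$ on a resolution does not descend to $X$.

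\textbf{Existence and canonicity.} Your existence argument (Arzel\`a--Ascoli, stability of RCD under measured Gromov--Hausdorff convergence, volume convergence) is a legitimate alternative to the paper's proof that the $d_j$ are uniformly Cauchy. However, the klt extension of \cite{GSSStability} in Step~3 needs more than the RCD structure. Its segment arguments must be run inside $X_{\reg}$, which uses $\mathcal R(\overline X)=X_{\reg}$ from Corollary~\ref{cor:main} and the almost everywhere convexity of the regular set \cite{Deng}.

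\textbf{Direction of the Lipschitz map.} The bounds $c\,d_\theta\le d_\omega\le C d_\theta^{\alpha}$ make the identity $(X,d_\omega)\to(X,d_\theta)$ Lipschitz with H\"older inverse. That is the correct direction: metrics with cone angle $2\pi\beta<2\pi$ along a divisor lie in the class and satisfy $d_\omega\approx d_\theta^{\beta}$ near it. So the statement's Lipschitz direction should be read with the two distances interchanged, not ``as stated''.
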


By canonical distance, we mean that it is uniquely determined and does not depend on regularization $\omega$ that mains a uniform Ricci lower bound (c.f. \cite{GSSStability}). Therefore every $\omega\in\cK_{\theta,\Omega}(p,K;\lambda)$ induces a unique compact K\"ahler RCD space homeomorphic to $X$ itself. 

The proof of Theorem~\ref{thm:main} proceeds in two steps. We first
treat, in Proposition~\ref{prop:smoothtwist}, the case of a smooth
twist with a Ricci lower bound. In Section~\ref{sec:proof}, we then
adapt the regularized maximum construction of
\cite[Section 6 and Lemmas 2.4--2.5]{FGS} to compact K\"ahler
spaces, using a logarithmic pole along the singular locus and
a fixed integrable bound for the approximating densities.
Spectral convergence then recovers the stated Ricci lower bound
on the original metric completion.
 We emphasize that Corollary~\ref{cor:main} requires neither
rationality nor projectivity. Its proof, given in
Section~\ref{subsec:local-singularities}, relies on the K\"ahler
current estimate of \cite[Theorem 1.2]{CCHSTT}, local weighted
holomorphic sections, and the holomorphic charts of
\cite{LiuSzekelyhidi} for metrics with Ricci curvature bounded below.
These local ingredients allow us to adapt the density gap and cone
exclusion arguments of \cite{Szekelyhidi,FGS}.

\subsection{Applications to fundamental groups of klt Calabi--Yau spaces}
\label{subsec:fundamental-applications}

The topology of compact K\"ahler spaces with nef anticanonical
class lies at a natural meeting point of differential and algebraic
geometry. When $c_1(X)=0$ and $X$ is smooth, it follows from Yau's
theorem \cite{Yau} and the Beauville--Bogomolov decomposition
\cite{Beauville} that the fundamental group is almost Abelian.
Here a group is called \emph{almost Abelian} if it contains an
Abelian subgroup of finite index. Replacing the vanishing of $c_1(X)$
by the nefness of $-K_X$ leads to a much larger class of spaces, on
which a Ricci-flat metric is no longer available.

Demailly--Peternell--Schneider \cite{DPS} studied this class by means
of metrics with almost nonnegative Ricci curvature and the geometry
of the Albanese map. The work of P\u aun on the fundamental group
\cite{Paun97}, combined with his surjectivity theorem for the
Albanese map \cite{PaunAlbanese}, establishes almost-Abelianity
for compact K\"ahler manifolds with nef anticanonical bundle.
In the projective case, the decomposition theorem of
Cao--H\"oring \cite{CaoHoring} gives a further structural
description. However, for singular spaces and log canonical pairs,
neither a smooth metric nor such a decomposition theory is available
in general.

A geometric approach to this problem was introduced by
Fu--Guo--Song--Wang \cite{FGSW}. They construct singular metrics with
almost nonnegative Ricci curvature, and apply the RCD Margulis lemma of
Deng--Santos-Rodr\'iguez--Zamora--Zhao \cite{RCDMargulis}
to obtain virtual nilpotence. Their surjectivity theorem for the
Albanese map, together with Campana's argument for K\"ahler groups
\cite{CampanaNilpotent}, then yields almost-Abelianity.
The construction relies on control of the lengths of generators of
the fundamental group, and does not require a uniform diameter
bound for the whole degenerating family. This method separates
the metric input from the complex geometric argument, and applies
to log canonical boundaries as well as to the Calabi--Yau case.

Theorem~\ref{thm:main} provides the required RCD input in all
dimensions. Consequently, the arguments of \cite{FGSW} yield the
following unconditional version of their Theorem~1.1, which extends
their Corollary~1.1 beyond the smooth and three-dimensional cases.

\begin{samepage}
\begin{theorem} 
\label{thm:fundamental}
Let $X$ be a   compact normal K\"ahler space with klt
singularities, and let $\Delta$ be an effective $\mathbb R$-divisor
such that $(X,\Delta)$ is log canonical. If $-(K_X+\Delta)$ is nef,
then the topological fundamental group $\pi_1(X)$ is almost Abelian.
There is no restriction on the complex dimension of $X$.
\end{theorem}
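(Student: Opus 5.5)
The plan is to follow the strategy of \cite{FGSW}, with Theorem~\ref{thm:main} supplying the RCD input that was previously available only in the smooth and three-dimensional cases. We split the argument into a metric part, which gives virtual nilpotence of $\pi_1(X)$, and a complex-geometric part, which upgrades virtual nilpotence to almost-Abelianity.

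\textbf{Step 1: metrics with almost nonnegative Ricci curvature.} Fix a smooth K\"ahler form $\theta$ and a smooth adapted volume measure $\Omega$. Since $-(K_X+\Delta)$ is nef, the class $-c_1(K_X+\Delta)+\epsilon[\theta]$ is K\"ahler for each $\epsilon>0$. Following \cite{FGSW}, we solve a family of complex Monge--Amp\`ere equations
\[
(\theta_\epsilon+\ddc\varphi_\epsilon)^n=e^{-f_\epsilon}\Omega ,
\]
with an auxiliary twist. The twist is chosen so that each solution $\omega_\epsilon$ is smooth on $X_{\reg}$ (off the support of $\Delta$, which we regularize) and satisfies $\ric(\omega_\epsilon)\ge-\epsilon\,\omega_\epsilon$, with density bounded in $L^p$. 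The log canonical boundary $\Delta$ is handled by replacing its divisorial part with a smoothing of $[\Delta]$, or with a conical approximation whose density stays in $L^p$ by the klt property of $X$; the precise construction is that of \cite{FGSW}.

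\textbf{Step 2: RCD structure and the Margulis lemma.} Applying Theorem~\ref{thm:main}, after rescaling so that the lower bound becomes $-\epsilon$, each completion $(\overline X,d_\epsilon,\mu_\epsilon)$ is a compact noncollapsed $\RCD(-\epsilon,2n)$ space. By Corollary~\ref{cor:main} it is homeomorphic to $X$, so its fundamental group is $\pi_1(X)$ and the universal cover is again RCD. This is the key point, because the metric fundamental group of $\overline X$ must be identified with the topological fundamental group of $X$. Next, we control the lengths of a fixed finite generating set of $\pi_1(X)$ in $d_\epsilon$ uniformly in $\epsilon$, scaling so that these lengths are $\le\epsilon'$ while the curvature bound remains $\ge -\delta$. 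Here we use the argument of \cite{FGSW}, which bounds lengths of loops via H\"older/distance comparison with $d_\theta$ on a fixed compact region rather than a global diameter bound. The Margulis lemma for RCD spaces \cite{RCDMargulis} then implies that $\pi_1(X)$ is virtually nilpotent.

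\textbf{Step 3: from nilpotent to Abelian.} Pass to a finite quasi-\'etale cover, which preserves klt singularities and the nefness of $-(K_X+\Delta)$. Using the surjectivity of the Albanese map from \cite{FGSW}, together with Campana's theorem on nilpotent K\"ahler groups \cite{CampanaNilpotent} applied to a resolution (klt singularities do not change $\pi_1$ under resolution), we conclude that the nilpotent finite-index subgroup is virtually Abelian.

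The main obstacle is Step 2: the uniform length control of generators along a degenerating family, which has no uniform diameter bound, and making sure the RCD Margulis constant applies in the resulting rescaled limit. I would first try to reproduce the \cite{FGSW} estimate verbatim, checking that the only ingredient used beyond smooth analysis was the RCD property now given by Theorem~\ref{thm:main} and Corollary~\ref{cor:main}.
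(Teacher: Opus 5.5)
For $\Delta=0$ your outline is the paper's argument. Theorem~\ref{thm:main} (after rescaling) and Corollary~\ref{cor:main} replace the three-dimensional RCD input of \cite{FGSW}. The length estimate, the RCD Margulis lemma and the Albanese/group-theoretic step are then imported unchanged. The length control you single out as the main obstacle is dimension-free and is taken directly from \cite{FGSW}.

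The genuine gap is the boundary. Step~1 asks for metrics in a fixed K\"ahler class that are smooth on $X_{\reg}$ and satisfy $\ric(\omega_\epsilon)\ge-\epsilon\omega_\epsilon$ there. For $\Delta\ne0$ these need not exist. Indeed, $\ric(\omega_\epsilon)+\epsilon\omega_\epsilon$ would be a smooth semipositive form in $-c_1(K_X)+\epsilon[\theta]$, so for smooth $X$ the class $-K_X$ would be nef. But on $X=\mathbb F_n$, $n\ge3$, with $\Delta=C_0$ the negative section, $-(K_X+\Delta)$ is nef while $-K_X\cdot C_0=2-n<0$. So the twist must contain $[\Delta]$, and the metrics are conical along $\operatorname{supp}\Delta\cap X_{\reg}$, where Theorem~\ref{thm:main} does not apply. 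Smoothing $[\Delta]$ restores smoothness only at the cost of a non-sharp bound $\ric\ge-\Lambda\omega$, since a smooth representative of $[\Delta]$ is not semipositive. And when $\Delta$ has coefficient-one components, the conical density is not even integrable, so the klt property of $X$ alone gives no $L^p$ bound.

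What is missing is the passage from these approximations back to a sharp almost-nonnegative bound; this is what the paper supplies. For klt $\Delta$, it proceeds in four steps:
\begin{enumerate}
\item Regularize the boundary as in \cite[Proposition~2.3]{FGSW}.
\item Apply Theorem~\ref{thm:main} to get uniform $\RCD(-\Lambda,2n)$ structures.
\item Use the geometric stability argument of Section~\ref{subsec:stability-proof} to identify the limiting distance on $X$. This is what makes the limit homeomorphic to $X$, so its fundamental group is $\pi_1(X)$.
\item Apply Honda's criterion \cite{Honda} to the limit to recover $\RCD(-\epsilon,2n)$. The limit has Ricci curvature $\ge-\epsilon$ off $\operatorname{supp}\Delta$, and its eigenfunctions are Lipschitz because it is already a limit of $\RCD(-\Lambda,2n)$ spaces.
\end{enumerate}
For log canonical $\Delta$, the paper replaces $\Delta$ by $(1-\delta)\Delta$ with $\delta\ll\epsilon$. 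This pair is klt because $X$ is klt and $(X,\Delta)$ is lc, and \cite[Lemma~2.10]{FGSW} then gives $\RCD(-2\epsilon,2n)$ metrics in a fixed K\"ahler class. Only after this do the length estimate and the Margulis lemma apply.

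A smaller point: in Step~3 you should pass to finite \'etale covers, which correspond to finite-index subgroups of $\pi_1(X)$. Quasi-\'etale covers instead see $\pi_1(X_{\reg})$.
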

\end{samepage}

\begin{samepage}
\begin{corollary}\label{cor:calabi-yau}
Let $X$ be a   compact normal klt K\"ahler space and let
$\Delta\ge0$ be an $\mathbb R$-divisor such that $(X,\Delta)$ is
log canonical and $c_1(K_X+\Delta)=0$ in real Bott--Chern cohomology.
Then $\pi_1(X)$ is almost Abelian. In particular, this holds for
every compact klt K\"ahler Calabi--Yau space, in arbitrary dimension.
\end{corollary}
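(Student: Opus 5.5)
We derive Corollary~\ref{cor:calabi-yau} from Theorem~\ref{thm:fundamental}, so the plan is to check that its hypotheses hold. The standing assumptions already match: $X$ is a compact normal K\"ahler space with klt singularities, and $\Delta\ge0$ is an effective $\mathbb R$-divisor with $(X,\Delta)$ log canonical. The only remaining point is to show that $-(K_X+\Delta)$ is nef in the sense used in Theorem~\ref{thm:fundamental}, namely the transcendental sense on a compact K\"ahler space.

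First, take $c_1(K_X+\Delta)=0$ in real Bott--Chern cohomology, so $c_1(-(K_X+\Delta))=0$ as well. Nefness of a class $\alpha$ means that for every $\epsilon>0$ the class $\alpha+\epsilon[\theta]$ contains a smooth form $\ge -\epsilon\theta$, or equivalently a smooth representative that is K\"ahler after adding $\epsilon\theta$. With $\alpha=0$ we may take the zero form as representative. Then $0+\epsilon\theta=\epsilon\theta>0$, so the class is nef. Hence $-(K_X+\Delta)$ is nef, with zero representative. If Theorem~\ref{thm:fundamental} is phrased via a smooth hermitian metric $h$ on the $\mathbb R$-line bundle (or via $\ric(\Omega)$ and the current $[\Delta]$), the Bott--Chern vanishing instead says that the curvature form $\ric(h)$ is $\ddc$-exact, $\ric(h)=\ddc u$ for a smooth $u$. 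Replacing $h$ by $he^{-u}$ gives zero curvature, which again gives nefness.

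Applying Theorem~\ref{thm:fundamental} then shows that $\pi_1(X)$ is almost Abelian. For the Calabi--Yau case, a compact klt K\"ahler Calabi--Yau space has $c_1(K_X)=0$ in $H^{1,1}_{BC}(X,\mathbb R)$, either by definition or because $K_X$ is torsion, or numerically trivial and hence Bott--Chern trivial on a klt K\"ahler space. Taking $\Delta=0$, the pair $(X,0)$ is klt and hence log canonical, and the first part applies.

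The main obstacle is not analytic but definitional. We must make sure that the notion of "$c_1=0$ in real Bott--Chern cohomology" for the $\mathbb R$-Cartier class $K_X+\Delta$ on a singular normal space matches the notion of nefness used in Theorem~\ref{thm:fundamental}. Both should be formulated in $H^{1,1}_{BC}(X,\mathbb R)$, defined through smooth forms with local potentials in the sense of local ambient embeddings. Once they are, the zero class is trivially nef. If Theorem~\ref{thm:fundamental} instead requires only numerical or pseudoeffective-type nefness, the Bott--Chern vanishing implies these a fortiori, so no further input is needed.
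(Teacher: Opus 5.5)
Your proposal is correct and matches the paper's proof: the paper likewise observes that $c_1(K_X+\Delta)=0$ makes $-(K_X+\Delta)$ nef (the zero class is trivially nef), applies Theorem~\ref{thm:fundamental}, and treats the Calabi--Yau case as $\Delta=0$. Your aside that numerical triviality of $K_X$ implies Bott--Chern triviality on a klt K\"ahler space is not needed and is not obvious in the non-projective setting, so it is cleaner to take $c_1(K_X)=0$ in $H^{1,1}_{BC}(X,\mathbb R)$ as the meaning of Calabi--Yau here.
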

\end{samepage}

Both the general theorem and its Calabi--Yau specialization are
obtained by combining the dimension-independent arguments of
\cite{FGSW} with the RCD and topological conclusions above.  We note that the fundamental group here is that of the underlying
space $X$ and  does not concern $\pi_1(X_{\reg})$. The almost Abelian property for $X_{\reg}$ is investigated in \cite{FGSW2}. 

\subsection{Partial \texorpdfstring{$C^0$}{C0} estimates for singular K\"ahler spaces}
\label{subsec:partialc0-application}

The partial $C^0$ estimate can be viewed as a quantitative version of
the Kodaira embedding theorem. This estimate was introduced by Tian in his study of
K\"ahler--Einstein metrics on Fano surfaces \cite{Tian90}.
Donaldson--Sun \cite{DonaldsonSun} combined the construction of
holomorphic peak sections with the theory of Ricci limit spaces to
establish uniform projective embeddings and the algebraicity of
Gromov--Hausdorff limits. The estimate plays a central role in the
work of Chen--Donaldson--Sun \cite{CDS1,CDS2,CDS3} and Tian
\cite{TianYTD} on the Yau--Tian--Donaldson conjecture for Fano manifolds. 

On singular K\"ahler spaces, these arguments require an intrinsic
geometric theory at the singular points. Fu--Guo--Song \cite[Theorem 1.3]{FGS} obtained a
uniform partial $C^0$-estimate in complex dimension 3.
Theorem~\ref{thm:partialc0} below generalizes this result to higher
dimensions. We do not impose any smoothability assumption
on $X$.

Before stating the result, we fix the normalization of the Bergman
kernel. Let $L$ be an ample line bundle on $X$, and let $h$ be a
Hermitian metric on it with bounded local weights, smooth on
$ X_{\reg}$, whose curvature is $\sqrt{-1}\Theta_h(L)=\omega$.
With our curvature convention, $[\omega]=2\pi c_1(L)$.
For $\ell\ge1$, we define the inner product and the Bergman kernel by
\begin{equation}\label{eq:bergman-innerproduct}
 \langle s,t\rangle_\ell
 =\int_{ X_{\reg}}\langle s,t\rangle_{h^\ell}
       \frac{(\ell\omega)^n}{n!}
 \quad\text{on }H^0(X,L^\ell),
 \qquad
 \rho_{\ell,\omega}(x)=\sum_{\alpha=1}^{N_\ell}|s_\alpha(x)|_{h^\ell}^2,
\end{equation}
where $(s_\alpha)$ is an orthonormal basis and
$N_\ell=\dim H^0(X,L^\ell)$. The pointwise norms on $X$ are
understood by continuous extension from $ X_{\reg}$, using
Corollary~\ref{cor:main} and the section estimates below.

\begin{theorem} \label{thm:partialc0}
Fix $n\ge2$, $D>0$ and $v>0$. There exist an integer
$m=m(n,D,v)\ge1$ and constants $b,B>0$, depending only on
$n,D,v$, with the following property.
Let $(X,\omega)$ satisfy the hypotheses of
Theorem~\ref{thm:main}, and suppose that $(L,h)$ is a
polarization as above and that
\[
 -\omega\le\ric(\omega)\le\omega\quad\text{on } X_{\reg},
 \qquad \operatorname{diam}(\overline X,d)\le D,
 \qquad \int_{ X_{\reg}}\frac{\omega^n}{n!}\ge v.
\]
Then, for every integer $k\ge1$ and every $x\in X$,
\begin{equation}\label{eq:partialc0-main}
 b\le\rho_{mk,\omega}(x)\le B.
\end{equation}
Moreover, $m$ can be chosen so that $L^m$ is very ample, and
the resulting embedding realizes $X$ as a subvariety of
$\mathbb P^N$ for an integer $N=N(n,D,v)$.
\end{theorem}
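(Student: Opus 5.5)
The plan is to follow the Donaldson--Sun strategy as carried out in three dimensions in \cite[Theorem 1.3]{FGS}, with the RCD input supplied by Theorem~\ref{thm:main} and Corollary~\ref{cor:main}. The proof is by contradiction and compactness.

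Suppose no $m$ works. Then there are spaces $(X_j,\omega_j,L_j,h_j)$ satisfying the hypotheses and points $x_j$ where $\rho_{mk,\omega_j}(x_j)\to0$ along a suitable sequence of $m,k$. The upper bound $B$ is the easier half. By Theorem~\ref{thm:main}, $(\overline X_j,d_j,\mu_j)$ is a noncollapsed $\RCD(-1,2n)$ space. The Sobolev and heat kernel estimates available from \cite{GPSS2}, together with the Bochner/Kato inequality $\Delta|s|^2\ge|\nabla s|^2-\ell n|s|^2$ on $X_{\reg}$, then give a mean value inequality for $|s|^2_{h^\ell}$ on the rescaled metric $\ell\omega$. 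To extend these integrations by parts across $X\setminus X_{\reg}$, I will use the capacity-zero property of the singular set, which follows from Corollary~\ref{cor:main} since it has codimension at least $4$ under the two-sided Ricci bound. This yields $\rho_\ell\le B$ and the gradient bound $|\nabla s|\le C\ell^{1/2}$ for $L^2$-normalized sections, uniformly in $j$.

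For the lower bound, I will pass to a pointed Gromov--Hausdorff subsequence limit of $(\overline X_j,\ell_j d_j,x_j)$ for rescaling factors $\ell_j$. By RCD compactness and volume noncollapsing (diameter $\le D$, volume $\ge v$), every tangent cone at a limit point is a metric cone $C(Y)$. By the codimension-four bound \eqref{eq:cor-codim-four}, together with the holomorphic charts of \cite{LiuSzekelyhidi} and the estimates of \cite{CCHSTT} used in the proof of Corollary~\ref{cor:main}, the regular part of $C(Y)$ is a smooth Kähler cone on which the singular set has codimension at least four. Following Donaldson--Sun, I will build on $C(Y)$ the holomorphic section $e^{-r^2/4}$ of the trivial bundle with the cone connection, cut it off away from the singular set and infinity, and transplant it to $(X_j,L_j^{\ell})$ using the almost-isometries given by Cheeger--Colding type smooth convergence on the regular part. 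That smooth convergence holds there because $|\ric|\le1$, so regular-region estimates are available. I will then correct the transplanted section by H\"ormander's $L^2$ estimate on the Kähler space $X_{\reg}$, where $\ric(\ell\omega)+\ell\omega\ge\tfrac12\ell\omega$ for large $\ell$ and $L^2$ extension across the singular set is justified by the boundedness of the potentials and normality of $X$. Combined with the gradient bound, this produces a holomorphic section with $|s(x_j)|^2\ge b$ and controlled $L^2$ norm, contradicting $\rho\to0$.

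The main obstacle is finding a single $m$ that works uniformly for all points and all $k$. The scales at which tangent cones become good vary from point to point, and the cone-section construction needs the holonomy of $L$ around the cone link to be trivialized, which forces restricting to multiples of some integer. I would handle this as in \cite{DonaldsonSun}: use the finiteness of limits under the compactness to bound the relevant holonomy orders and scales, which gives a finite set of admissible integers, and take $m$ to be a common multiple. Passing from $\ell=m$ to $\ell=mk$ then follows by multiplying sections, together with the same argument applied at larger rescalings.

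For very ampleness, I will separate points and tangents with the same peak-section construction at two points (resp. with a linear function on the cone). The uniform bound on $N=N(n,D,v)$ then follows from the upper bound $\rho\le B$ integrated against the bounded volume, which bounds $N_m$.
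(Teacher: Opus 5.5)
Your outline follows the same blueprint as the paper: Donaldson--Sun peak sections in the singular form of \cite[Section 9]{FGS}, a bounded set of powers with a common multiple, and Bergman kernel integration for $N$. However, both places where uniformity over the class is decided rest on non-uniform inputs.

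\textbf{Main gap: the structure of the limit spaces.} You take Corollary~\ref{cor:main} and \eqref{eq:cor-codim-four} to show that tangent cones of a limit $Z$ of $(\overline X_j,\sqrt{\ell_j}\,d_j,x_j)$ have smooth K\"ahler regular parts and small singular sets. (The rescaling is by $\sqrt{\ell_j}$, not $\ell_j$.) Those results concern one fixed space. The gap $\epsilon_X$ in Lemma~\ref{lem:local-density-gap} comes from the klt, finite-order and $L^p$ data of that particular $X$. A Hausdorff dimension bound for each $\overline X_j$ says nothing about a Gromov--Hausdorff limit of different spaces.

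The transplantation step needs the following: points of $Z$, or of an iterated tangent cone, with density close to $1$ are limits of points of $X_{j,\reg}$ with a uniform regularity scale. Only then does $|\ric|\le1$ give Anderson-type $C^{1,\alpha}$ convergence of the polarized data. Otherwise, complex-singular points of $X_j$ whose density tends to $1$ may accumulate at metric regular points of $Z$. There $X_j$ is not smooth, and your ``Cheeger--Colding type smooth convergence'' is unavailable.

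The missing ingredient is Lemma~\ref{lem:polarized-cones}. It provides a density gap $\epsilon_n$ depending only on $n$, obtained by running Sz\'ekelyhidi's induction on dimension \cite[proof of Theorem 36]{Szekelyhidi}. That induction uses Ricci-flat blow-ups from the two-sided bound and \cite{CCT} to exclude nonflat codimension-two cones. The gap then yields, for all rescalings $k_j\ge1$ and all iterated tangent cones, open regular loci, zero-capacity singular sets, and convergence of the data on compact subsets of the regular part.

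\textbf{Upper bound.} It cannot be taken from the Sobolev and heat kernel estimates of \cite{GPSS2}. Their constants depend on $(X,\theta,\Omega,p,K)$, which are not controlled by $n,D,v$. Moreover, the exponent $q$ there is subcritical, so under $\omega\mapsto\ell\omega$ Moser iteration only gives $\rho_\ell\le C\ell^{\kappa}$ with $\kappa>0$. You need the local Sobolev inequality from comparison geometry on noncollapsed $\RCD(-1,2n)$ spaces with $\operatorname{diam}\le D$ and volume $\ge v$. That inequality is uniform under rescaling by $k\ge1$, and it is what the paper uses.

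\textbf{All multiples $mk$.} Multiplying a section of $L^m$ by itself $k$ times only gives the lower bound $b^k$. The paper, following \cite{ZhangPartial}, runs the contradiction argument for the rescaled data $(L^k,k\omega)$, allowing $k_j\to\infty$. This produces $Q,\eta$ such that for every $x$ and $k$ some $2\le q\le Q$ satisfies $\rho_{qk,\omega}(x)\ge\eta$. Taking $m=Q!$ then requires only the bounded number $Q!/q$ of multiplications. Your phrase ``the same argument applied at larger rescalings'' needs to be made into exactly this statement.
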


Theorem~\ref{thm:partialc0} provides one of the main analytic
ingredients for a geometric approach to the Yau--Tian--Donaldson
correspondence on singular Fano varieties: uniform embeddings allow a
metric limit along a continuity method to be studied as an algebraic
degeneration. This is precisely the mechanism used in the smooth and
conical settings \cite{CDS1,CDS2,CDS3,TianYTD,SzekelyhidiContinuity}.
It suggests a route to the log Fano correspondence which does not
rely on a variational framework, giving existence in the K-stable
case and degeneration to a K-polystable K\"ahler--Einstein pair in
the K-semistable case. More precisely, existence on the original pair
corresponds to K-polystability, and a strictly K-semistable pair need
not admit a K\"ahler--Einstein metric.

For comparison, we recall that the general correspondence has been
established in the variational and algebro-geometric works
\cite{BermanK,BBJ,LTWUniform,LiUniform,LXZ}.
A geometric proof based on the present estimate would also require
uniform bounds along the chosen continuity path and the
identification of its algebraic limit. For a nonzero log boundary, it
would further require a logarithmic partial $C^0$ estimate, or an
approximation for which the relevant constants remain uniform, since
the metric in Theorem~\ref{thm:partialc0} is smooth on all of
$ X_{\reg}$. We do not carry out these additional steps here.

The same estimate is also the analytic ingredient needed to compare
the differential geometric moduli of K\"ahler--Einstein metrics with
the algebraic moduli of K-stable Fano varieties and of canonical
models of general type.   This extends the
perspective of \cite{DonaldsonSun,OdakaModuli,LWXModuli} and, in the
singular three-dimensional setting, of \cite[Section 11]{FGS}.
Together with the existence and uniqueness of the canonical metrics,
their compatibility in families, and the local deformation and
quotient theory, it provides a route towards a holomorphic
equivalence of the corresponding moduli spaces.  In the case of general type, the compact klt setting considered here
includes canonical models with canonical singularities. However, the
full KSBA boundary also contains semi-log canonical varieties, and
requires the complete, possibly noncompact, metric limits studied by Song--Sturm--Wang \cite{SSWCompactness,SSWWeilPetersson}. 

\bigskip
\noindent\textbf{Acknowledgments.}
The use of the Ricci entropy was inspired by a joint project of the
second author with Wangjian Jian and Yalong Shi on applications of
Kato bounds in K\"ahler geometry. The authors would like to thank
Wangjian Jian and Yalong Shi for many inspiring and insightful
discussions. The proof of Proposition~\ref{prop:analytic}, which is
the key technical ingredient of this paper, was provided by
ChatGPT-6 Astra.

\bigskip

\section{Preliminaries}\label{sec:prelim}

\subsection{Geometric estimates without curvature assumptions}

We fix a K\"ahler form $\theta_Y$ on the smooth compact resolution $Y$
and set $\vartheta=\pi^*\theta$. If we let
$h=\pi^*\Omega/\theta_Y^n$, then the klt inequalities in
\eqref{eq:adapted} imply that $h\in L^{1+\delta}(Y,\theta_Y^n)$ for
some $\delta>0$.

\begin{lemma} \label{lem:density}
If $e^{-\psi}\in L^p(X,\Omega)$ for $p>1$, then there exists $r>1$
such that $e^{-\pi^*\psi}h\in L^r(Y,\theta_Y^n)$, with a bound
depending only on $p$, the $L^p$ bound and the fixed resolution data.
\end{lemma}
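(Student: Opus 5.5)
This is a Hölder interpolation argument. Write $F=e^{-\pi^*\psi}$ on $Y$, and recall that $h=\pi^*\Omega/\theta_Y^n\in L^{1+\delta}(Y,\theta_Y^n)$ by the klt condition in \eqref{eq:adapted}. Since $\pi$ is an isomorphism over $X_{\reg}$ and the exceptional set has measure zero for both $\Omega$ and $\theta_Y^n$, the hypothesis transfers to $Y$:
\[
 \int_Y F^p\,h\,\theta_Y^n=\int_X e^{-p\psi}\,\Omega\le K^p .
\]
So $F\in L^p(Y,h\,\theta_Y^n)$. We want to trade part of the weight $h$ for integrability.

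For $r>1$ to be chosen, split
\[
 (Fh)^r=\bigl(F^r h^{r/p}\bigr)\cdot h^{\,r-r/p}
\]
and apply Hölder with exponents $p/r$ and $s=p/(p-r)$, which requires $r<p$. This gives
\[
 \int_Y (Fh)^r\,\theta_Y^n\le\Bigl(\int_Y F^p h\,\theta_Y^n\Bigr)^{r/p}\Bigl(\int_Y h^{\,(r-r/p)s}\,\theta_Y^n\Bigr)^{1-r/p}.
\]
The exponent in the second factor is
\[
 (r-r/p)\,s=\frac{r(p-1)}{p-r}.
\]
This tends to $1$ as $r\downarrow1$, so we can choose $r\in(1,p)$ close enough to $1$ that $r(p-1)/(p-r)\le1+\delta$. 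Since $Y$ is compact, $h\in L^{1+\delta}$ then controls the second factor. Explicitly, the condition is $r\le p(1+\delta)/(p+\delta)$, and this quantity exceeds $1$ because $p>1$.

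The resulting bound is $\|Fh\|_{L^r}\le K^{1/p}\,C(h,\delta)$, and $r$ depends only on $p$ and $\delta$, which are fixed resolution data. There is no real obstacle here. The only points needing care are the choice of $r$ and the measure-zero identification of the integrals on $X$ and $Y$; if the latter were doubtful, one would integrate only over $\pi^{-1}(X_{\reg})$.
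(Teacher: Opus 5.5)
Your proof is correct and is essentially the paper's argument. The paper writes the integral as $\int_Y e^{-r\pi^*\psi}h^{r-1}\,\pi^*\Omega$ and applies H\"older with exponents $p/r$ and $p/(p-r)$ with respect to $\pi^*\Omega$ rather than $\theta_Y^n$. This gives the same exponent $r(p-1)/(p-r)$ on $h$ and the same choice of $r$ close to $1$.

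One trivial slip: since $\int_Y F^p h\,\theta_Y^n\le K^p$, the resulting bound is $\|Fh\|_{L^r}\le K\,C(h,\delta)$, not $K^{1/p}C(h,\delta)$. This does not affect the conclusion.
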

\begin{proof}
We choose $1<r<p$ close enough to one so that
$r(p-1)/(p-r)<1+\delta$. By H\"older's inequality, we have
\begin{align*}
 \int_Ye^{-r\pi^*\psi}h^r\theta_Y^n
 &=\int_Ye^{-r\pi^*\psi}h^{r-1}\,\pi^*\Omega\\
 &\le\left(\int_Xe^{-p\psi}\Omega\right)^{r/p}
 \left(\int_Yh^{r(p-1)/(p-r)}\theta_Y^n\right)^{(p-r)/p}.
\end{align*}
The second factor is finite by the choice of $r$.
\end{proof}

The same computation, combined with the elementary inequality
$t|\log t|^N\le C_{N,r}(1+t^r)$, controls all finite entropy moments
with respect to $\theta_Y^n$. After principalizing the Jacobian
ideal, we may write $\vartheta^n=J\theta_Y^n$, where $J$ is locally
comparable to a product of nonnegative powers of the
$|s_i|_{h_i}^2$. Hence $|\log J|$ has finite Lebesgue moments of all
orders, and H\"older's inequality gives
\begin{equation}\label{eq:nashentropy}
 \int_{ X_{\reg}}\left|\log\frac{\omega^n}{\theta^n}\right|^N
                  \omega^n<\infty\qquad(N<\infty).
\end{equation}
We remark that this verification does not use the RCD conclusion.

The geometric estimates used in this paper come from the work of
Guo--Phong--Song--Sturm on diameter estimates and Sobolev inequalities
\cite{GPSS1,GPSS2,GPSS3}. In particular, the heat kernel and Sobolev
space results used below are taken from \emph{Sobolev inequalities on
K\"ahler spaces} \cite{GPSS2}. By an \emph{approximation} we mean a
family of smooth K\"ahler metrics
\begin{equation}\label{eq:approximation}
 \omega_j=\vartheta+\eps_j\theta_Y+\ddc u_j,
 \quad \eps_j\downarrow0,\quad
 \sup_j\|u_j\|_\infty<\infty,\quad
 \sup_j\left\|\frac{\omega_j^n}{\theta_Y^n}\right\|_{L^r(\theta_Y^n)}<\infty
\end{equation}
for a fixed $r>1$, which converge locally smoothly to $\pi^*\omega$
away from $E=\bigcup_iE_i$. All constants in this definition are
required to be uniform in $j$.
These hypotheses place the family in the setting of
\cite[Theorems 2.1 and 2.2]{GPSS2}. Indeed, the volumes are bounded
above and below, and the intersection numbers with
$[\theta_Y]^{n-1}$ are bounded. The $L^r$ bound controls each fixed
Nash entropy. Moreover, local smooth convergence gives a common
positive lower bound for the normalized volume densities on each
compact subset of $Y\setminus E$. Using an exhaustion and a partition
of unity, we therefore obtain a single continuous function $\gamma$,
positive on $Y\setminus E$ and vanishing on $E$, which lies below all
these densities. Since $E$ is a divisor, we have
$\dim_{\rm H}\{\gamma=0\}\le2n-2<2n-1$, where the Hausdorff dimension
is computed with respect to the fixed smooth metric $\theta_Y$. Hence
the lower density hypothesis in \cite{GPSS2} is satisfied by a common
function $\gamma$; the corresponding constants may depend on this
fixed approximating family.

\begin{theorem} \label{thm:background}
For the target metric with bounded potentials and the approximating
families considered here, the metric completion is compact and the
Dirichlet form has compact resolvent. Moreover, there exist $q>1$ and
uniform constants such that
\begin{align}
 \operatorname{diam}(\overline X,d)&\le C,
       &\operatorname{diam}(Y,\omega_j)&\le C,\label{eq:diameter}\\
 \|w\|_{L^{2q}}^2&\le C_S\left(\|\nabla w\|_2^2+\|w\|_2^2\right),
       &&\label{eq:sobolev}\\
 0\le H_j(t,x,y)&\le C(1+t^{-q/(q-1)})\qquad(0<t\le1),
       &&\label{eq:heat}\\
 \lambda_{k,j}&\ge c k^{(q-1)/q}\qquad(k\ge1).
       &&\label{eq:spectrum}
\end{align}
The Sobolev inequality holds for the target metric as well as for the
approximating metrics, and the Friedrichs heat kernel of the target
satisfies the analogous bound. The exponent $q$ is subcritical; the
optimal Sobolev exponent is not needed here.
\end{theorem}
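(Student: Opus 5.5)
The plan is to reduce Theorem~\ref{thm:background} to the cited results of Guo--Phong--Song--Sturm \cite{GPSS1,GPSS2,GPSS3}, applied uniformly to the approximating family \eqref{eq:approximation} on the resolution $Y$, and then to pass to the target metric. First I would check that each $\omega_j$ lies in the admissible class of \cite[Theorems 2.1 and 2.2]{GPSS2}. The class $[\vartheta+\eps_j\theta_Y]$ is bounded, so the volumes and the intersection numbers with $[\theta_Y]^{n-1}$ are uniformly controlled. The uniform $L^r$ bound on $\omega_j^n/\theta_Y^n$ bounds every fixed $p$-th Nash entropy $\int (\omega_j^n/\theta_Y^n)|\log(\omega_j^n/\theta_Y^n)|^p\theta_Y^n$, using $t|\log t|^N\le C(1+t^r)$ as after Lemma~\ref{lem:density}. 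Local smooth convergence on $Y\setminus E$ gives the common lower density function $\gamma$ constructed above, whose zero set $E$ has Hausdorff dimension $\le 2n-2$. The diameter estimate of \cite{GPSS1,GPSS3} then gives $\operatorname{diam}(Y,\omega_j)\le C$.

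Next I would invoke \cite{GPSS2}. It provides the uniform Sobolev inequality \eqref{eq:sobolev} for some $q>1$ depending only on $n,r$ and the entropy bounds. From it I would derive \eqref{eq:heat} by the standard Nash--Moser iteration. Concretely, Sobolev gives a Nash inequality with effective dimension $2q/(q-1)$; this yields the on-diagonal bound $H_j(t,x,x)\le C(1+t^{-q/(q-1)})$ for $t\le1$, and the semigroup property together with Cauchy--Schwarz gives the off-diagonal bound. Nonnegativity is the parabolic maximum principle. For \eqref{eq:spectrum} I would integrate the heat kernel bound: $\sum_k e^{-t\lambda_{k,j}}=\int H_j(t,x,x)\le C t^{-q/(q-1)}V$. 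Choosing $t=\lambda_{k,j}^{-1}$ then gives $k\le C\lambda_{k,j}^{q/(q-1)}$, i.e.\ $\lambda_{k,j}\ge ck^{(q-1)/q}$. The same bound makes the heat semigroup trace class, so the Dirichlet form has compact resolvent.

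The main obstacle is transferring everything to the target $(\overline X,d,\mu)$, where $\omega$ is only smooth on $X_{\reg}$. For the diameter I would use that $\pi^*\omega$ itself satisfies the GPSS hypotheses: its density lies in $L^r$ by Lemma~\ref{lem:density}, and its potential is bounded. Alternatively, I would take a limit of the uniform diameter bounds for $\omega_j$, using local smooth convergence on compact subsets of $Y\setminus E$. The delicate point there is that curves may approach $E$, which is why I prefer applying \cite{GPSS1} directly to the target, as in the statement preceding Theorem~\ref{thm:main}. For the Sobolev inequality, I would take $w$ Lipschitz with compact support in $X_{\reg}$, apply \eqref{eq:sobolev} for $\omega_j$, and let $j\to\infty$ using smooth convergence on $\operatorname{supp}w$. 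Density of such functions in $W^{1,2}$ of the Friedrichs extension requires a cutoff argument near the singular set. That argument works because $E$ has real codimension two and the $L^r$ density gives small capacity, following \cite{GPSS2}. The Friedrichs heat kernel bound and compact resolvent then follow from the Sobolev inequality exactly as in the second step.
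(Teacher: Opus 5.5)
Your outline is the paper's own. You check that the family \eqref{eq:approximation} meets the hypotheses of \cite[Theorems 2.1 and 2.2]{GPSS2}: volume and intersection bounds, Nash entropies from the $L^r$ bound, and a common lower density $\gamma$ vanishing only on $E$. You then take the diameter and Sobolev bounds from \cite{GPSS1,GPSS2,GPSS3}, and get \eqref{eq:heat} from a Nash inequality of dimension $2q/(q-1)$.

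The step that fails as written is your derivation of \eqref{eq:spectrum}. Since \eqref{eq:heat} is only available for $0<t\le1$, the choice $t=\lambda_{k,j}^{-1}$ is legitimate only when $\lambda_{k,j}\ge1$. For $t\ge1$, monotonicity of $H_j(t,x,x)$ in $t$ only gives $\sum_k e^{-t\lambda_{k,j}}\le 2C\vol(Y,g_j)$. This bounds the \emph{number} of eigenvalues below $1$, but not their size. You therefore still need a uniform spectral gap $\lambda_{1,j}\ge c$, and it does not follow from \eqref{eq:diameter}, \eqref{eq:sobolev} and \eqref{eq:heat}. For instance, take two round $m$-spheres, $m\ge3$, joined by a neck of scale $\delta$. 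They have bounded diameter and satisfy \eqref{eq:sobolev} with a constant independent of $\delta$, because the $\|w\|_2^2$ term absorbs the test function equal to $\pm1$ on the two spheres. Yet $\lambda_1\lesssim\delta^{m-2}\to0$. The missing input is a uniform Poincar\'e inequality. The Sobolev inequality of \cite{GPSS2} supplies it, since it is of Sobolev--Poincar\'e type: it bounds $w-\bar w$ by $\|\nabla w\|_2$ alone. Alternatively, take the eigenvalue bound directly from \cite[Theorem 2.2]{GPSS2}, as the paper does, and use your trace argument only for $\lambda_{k,j}\ge1$.

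There are also two smaller points about the target. First, for the Friedrichs form, $C_c^\infty(M)$ is dense in the form domain by definition. So your limiting argument on compactly supported functions already gives \eqref{eq:sobolev} there. The cutoffs \eqref{eq:cutoffs} are needed only to identify this domain with all finite-energy functions. Their existence comes from the boundedness of the local potentials, which controls $\int_M i\partial\chi\wedge\bar\partial\chi\wedge\omega^{n-1}$. It does not come from the $L^r$ density bound, nor from the codimension of $E$ in $Y$, along which $\pi^*\omega$ degenerates. Second, a diameter bound by itself does not make $\overline X$ compact. Total boundedness comes from the small-ball lower bound $\mu(B(x,r))\ge cr^{2q/(q-1)}$ for $r\le1$, which follows from \eqref{eq:sobolev} by the usual test-function iteration (see also \cite{GPSS1}).
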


The Sobolev, spectral and heat kernel estimates apply to the
approximations defined above. For completeness, we note that the
exponent in \eqref{eq:heat} also follows from \eqref{eq:sobolev}:
interpolation between $L^1$ and $L^{2q}$ gives a Nash inequality of
effective dimension $2q/(q-1)$, and the standard energy inequality for
the heat equation then yields the stated ultracontractivity bound.
Combined with the semigroup identity, this bounds the heat kernel, and
integrating the kernel along the diagonal bounds the eigenvalue
counting function. We emphasize that no curvature bound for the
approximating metrics is used in these preliminary estimates. The
passage to the target metric and its compact completion is the
singular space part of \cite{GPSS2}; the compactness and the passage
to the target needed in the present approximation setting are also
explained below.

\subsection{Spectral theory}
Let $M= X_{\reg}$. Since the local potentials are bounded, there exist
cutoff functions $\chi_k\in C_c^\infty(M)$ such that
\begin{equation}\label{eq:cutoffs}
 \begin{gathered}
 0\le\chi_k\le1,\qquad
 \chi_k=1\text{ on an exhausting sequence of compact subsets},\\
 \int_M|\nabla\chi_k|^2dV_g\longrightarrow0.
 \end{gathered}
\end{equation}
This is proved in \cite[Lemma 8.2]{GPSS2}. Following
\cite[Definition 8.1 and Proposition 8.1]{GPSS2}, we denote by
$W^{1,2}(X)$ the Sobolev space of finite energy functions for the
metric $g$ on $M$: its elements are functions in $W^{1,2}_{loc}(M)$
with finite norm in \eqref{eq:domain}, modulo equality almost
everywhere. By means of the cutoff functions of zero capacity, this
space is identified with the form domain:
\begin{equation}\label{eq:domain}
 W^{1,2}(X)=\overline{C_c^\infty(M)}^{\|\cdot\|_{1,2}},\qquad
 \|w\|_{1,2}^2=\int_M(w^2+|\nabla w|^2)dV_g.
\end{equation}
The same space is also naturally identified with
$W^{1,2}(\overline X,d,dV_g)$. We note that the notation $W^{1,2}(X)$
refers to the given metric and measure, and does not presuppose a
topological identification of $X$ with $\overline X$.
The following lemma makes explicit the removal step used below.

\begin{lemma} \label{lem:removal}
Suppose that \eqref{eq:cutoffs} holds. If $u\in W^{1,2}_{loc}(M)$ has
finite $\|u\|_{1,2}$, then $u\in W^{1,2}(X)$. If in addition $u$ is
smooth on $M$, $F\in L^2(M)$, and $\Delta u\ge-F$ there, then
\begin{equation}\label{eq:weakremove}
 \int_M\langle\nabla u,\nabla\zeta\rangle dV_g
 \le\int_MF\zeta\,dV_g
\end{equation}
for every bounded nonnegative $\zeta\in W^{1,2}(X)$.
\end{lemma}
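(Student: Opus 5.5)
The plan is to prove the two assertions of Lemma~\ref{lem:removal} in order, using only the cutoffs $\chi_k$ of \eqref{eq:cutoffs} and the definition \eqref{eq:domain} of $W^{1,2}(X)$ as a closure of $C_c^\infty(M)$.

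\emph{Membership in $W^{1,2}(X)$.} First reduce to bounded $u$. Put $u_N=\max(-N,\min(u,N))$. Then $u_N\in W^{1,2}_{loc}(M)$, $|\nabla u_N|\le|\nabla u|$, and $u_N\to u$ in $\|\cdot\|_{1,2}$ by dominated convergence. Since $W^{1,2}(X)$ is closed, it is enough to treat each $u_N$. For bounded $u$, consider $\chi_k u$. It is compactly supported in $M$ and lies in $W^{1,2}$, so by local mollification in charts together with a partition of unity on $\operatorname{supp}\chi_k$ it is an $\|\cdot\|_{1,2}$-limit of $C_c^\infty(M)$ functions. Hence $\chi_k u\in W^{1,2}(X)$. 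Next we estimate the error:
\[
\|u-\chi_k u\|_{1,2}^2\le 2\int_M (1-\chi_k)^2(u^2+|\nabla u|^2)\,dV_g+2N^2\int_M|\nabla\chi_k|^2\,dV_g .
\]
The first term tends to zero by dominated convergence, because $\chi_k\to1$ pointwise on the exhaustion. The second term tends to zero by \eqref{eq:cutoffs}. This proves the first claim. The only delicate point in this step is the truncation, which is needed because $\nabla\chi_k$ is only controlled in $L^2$.

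\emph{The weak inequality.} Let $\zeta\ge0$ be bounded with $\zeta\in W^{1,2}(X)$, say $\zeta\le A$. The test function $\chi_k\zeta$ is compactly supported in $M$, nonnegative, and in $W^{1,2}$. Since $u$ is smooth on $M$, the classical inequality $\Delta u\ge -F$ integrates against it to give
\[
\int_M\langle\nabla u,\nabla(\chi_k\zeta)\rangle\,dV_g\le\int_M F\chi_k\zeta\,dV_g .
\]
To justify this for a merely $W^{1,2}$ test function, approximate $\chi_k\zeta$ by nonnegative smooth functions supported in a fixed compact set, obtained by mollification, where $u$ is $C^2$. Expanding the left side gives $\int\chi_k\langle\nabla u,\nabla\zeta\rangle+\int\zeta\langle\nabla u,\nabla\chi_k\rangle$. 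The second term is bounded by $A\|\nabla u\|_2\|\nabla\chi_k\|_2\to0$. The first term converges to $\int\langle\nabla u,\nabla\zeta\rangle$ by dominated convergence, since $|\nabla u||\nabla\zeta|\in L^1$. The right side converges to $\int F\zeta$ because $F\zeta\in L^2\subset L^1$ on the finite-measure space $M$.

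\emph{Main obstacle.} The main obstacle is that the boundary term $\int\zeta\langle\nabla u,\nabla\chi_k\rangle$ must vanish. This is exactly where boundedness of $\zeta$, finiteness of $\|\nabla u\|_2$, and $\|\nabla\chi_k\|_2\to0$ are combined. Beyond that, only the routine local approximation in the first step needs care.
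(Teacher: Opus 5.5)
Your proposal is correct and follows the paper's proof essentially step by step. You truncate to $T_Nu$, approximate by $\chi_kT_Nu$ with the error controlled by dominated convergence plus $N\|\nabla\chi_k\|_2$, and then test $\Delta u\ge-F$ against nonnegative smoothings of $\chi_k\zeta$, killing the cross term with $\|\zeta\|_\infty\|\nabla u\|_2\|\nabla\chi_k\|_2$; you simply spell out the mollification and dominated-convergence details that the paper leaves terse.
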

\begin{proof}
Let $T_Nu=\max(-N,\min(u,N))$. For fixed $N$, we have
\[
 \|\nabla(\chi_kT_Nu)-\nabla T_Nu\|_2
 \le\|(1-\chi_k)\nabla T_Nu\|_2+N\|\nabla\chi_k\|_2\longrightarrow0.
\]
The $L^2$ convergence follows from the dominated convergence theorem,
and local smoothing shows that $\chi_kT_Nu\in W^{1,2}(X)$. We then
let $N\to\infty$. To prove the weak inequality, we test the
inequality on $M$ against $\chi_k\zeta$, using a local approximation
which preserves nonnegativity and a uniform bound. The resulting
error term is bounded in absolute value by
$\|\zeta\|_\infty\|\nabla u\|_2\|\nabla\chi_k\|_2$, which tends to
zero, and the remaining terms converge in $L^2$.
\end{proof}

\begin{proposition}\label{thm:criterion}
If $\ric(g)\ge-g$
on $M$ and all eigenfunctions of $\Delta$ are Lipschitz,
then $\overline{( X_{\reg}, \omega, \omega^n)}$ is a noncollapsed
$\RCD(-1,2n)$-space.
\end{proposition}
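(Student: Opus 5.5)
The plan is to verify the Bakry--\'Emery characterization of $\RCD(-1,\infty)$ on the metric measure space $(\overline X,d,\mu)$, and then upgrade it to $\RCD(-1,2n)$ through the dimensional Bochner inequality. The Dirichlet form is $\mathcal E(w)=\int_M|\nabla w|^2dV_g$ on $W^{1,2}(X)$ as in \eqref{eq:domain}. By Theorem~\ref{thm:background} it has compact resolvent, so $L^2$ has an orthonormal basis of eigenfunctions $\phi_k$. By the hypothesis each $\phi_k$ is Lipschitz, and by local elliptic regularity each is smooth on $M$. The key steps, in order, are as follows.

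\begin{enumerate}
\item \emph{Quasi-regularity.} I will show that $\mathcal E$ is a strongly local, quasi-regular Dirichlet form whose intrinsic distance coincides with $d$. The inequality $d_{\mathcal E}\ge d$ comes from testing with $d(x_0,\cdot)$, which is $1$-Lipschitz and lies in $W^{1,2}(X)$ by Lemma~\ref{lem:removal}. The reverse inequality $d_{\mathcal E}\le d$ uses the fact that $M$ is a geodesically dense, length-convex open set. Here the cutoffs \eqref{eq:cutoffs} show that $\overline X\setminus M$ has zero capacity. Hence the Cheeger energy of $(\overline X,d,\mu)$ equals $\mathcal E$, and the space is infinitesimally Hilbertian.

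\item \emph{Dimensional Bochner inequality.} For finite linear combinations $u$ of the $\phi_k$, the pointwise Bochner formula holds on $M$:
\[
\tfrac12\Delta|\nabla u|^2\ \ge\ \langle\nabla u,\nabla\Delta u\rangle-|\nabla u|^2+\tfrac1{2n}(\Delta u)^2 .
\]
It follows from $\ric\ge-g$ and $|\nabla^2u|^2\ge(\Delta u)^2/2n$. Since $u$ is Lipschitz, $|\nabla u|^2\in L^\infty$. The right-hand side lies in $L^2$, because $\Delta u$ and $\nabla\Delta u$ are again finite combinations of eigenfunctions. I still need $|\nabla u|^2\in W^{1,2}(X)$. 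For this I first test the Bochner inequality against $\chi_k^2$. The Kato inequality gives $|\nabla|\nabla u|^2|\le2|\nabla u||\nabla^2u|$, and absorbing this term yields $\int\chi_k^2|\nabla^2u|^2\le C$ uniformly in $k$. Therefore $|\nabla u|^2$ has finite $W^{1,2}$ norm, and the first part of Lemma~\ref{lem:removal} places it in $W^{1,2}(X)$. The second part of Lemma~\ref{lem:removal}, with $F=-\langle\nabla u,\nabla\Delta u\rangle+|\nabla u|^2-\frac1{2n}(\Delta u)^2$, then gives the weak Bochner inequality $\mathrm{BE}(-1,2n)$ against all bounded nonnegative test functions in $W^{1,2}(X)$.

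\item \emph{Density and conclusion.} Finite eigenfunction expansions are dense in the domain of the generator with the graph norm. The weak $\mathrm{BE}(-1,2n)$ inequality passes to limits, in the integrated form of Ambrosio--Gigli--Savar\'e and Erbar--Kuwada--Sturm. Combined with the Sobolev-to-Lipschitz property, this gives $\RCD(-1,2n)$. The Sobolev-to-Lipschitz property follows from step 1, the identification $d_{\mathcal E}=d$, and the volume doubling / local Poincar\'e available from the Sobolev inequality \eqref{eq:sobolev} and compactness. Finally, $\mu$ agrees with $\omega^n$ on $M$, which has full measure, so the smooth points have density $1$ with respect to $\mathcal H^{2n}$. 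By De Philippis--Gigli, $\mu=c\,\mathcal H^{2n}$, and the space is noncollapsed.
\end{enumerate}

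The main obstacle is step 1, specifically $d_{\mathcal E}\le d$, together with the Sobolev-to-Lipschitz property. A function with $|\nabla f|\le1$ a.e.\ on $M$ is $1$-Lipschitz for the length metric of $M$. To compare this with $d$, I need almost every pair of points to be joined by near-minimal curves inside $M$. My first attempt will use the cutoffs of vanishing capacity to show that the singular set $\overline X\setminus M$ is not hit by almost every curve in a test plan, which is an argument of Fubini/segment-inequality type. If needed, I will instead invoke the Lipschitz eigenfunctions directly. They separate points and generate the same topology, and $d$ is recovered as the supremum of $|f(x)-f(y)|$ over $f\in\mathrm{span}\{\phi_k\}$ with $|\nabla f|\le1$.
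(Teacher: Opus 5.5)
The architecture of your steps 2 and 3 is sound. In substance it is the proof of Honda's criterion \cite{Honda}, which the paper uses as a black box after checking its hypotheses: Sobolev-to-Lipschitz, compactness of the energy embedding, and $\ric\ge-g$ on $M$. Your cutoff argument giving $\int_M|\nabla^2u|^2<\infty$ and $|\nabla u|^2\in W^{1,2}(X)$ for finite eigenfunction combinations works, and so does the appeal to Lemma~\ref{lem:removal}. One detail in the density step: graph-norm convergence alone does not control the term $\int\phi\langle\nabla f,\nabla\Delta f\rangle$. You need $\Delta f_N\to\Delta f$ in $W^{1,2}$, which truncation of the eigenfunction expansion does give whenever $\Delta f\in W^{1,2}$.

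The genuine gap is the Sobolev-to-Lipschitz property. In step 3 you derive it from volume doubling and a local Poincar\'e inequality ``available from \eqref{eq:sobolev} and compactness.'' That implication is false, and such bounds are only available a posteriori, from the RCD conclusion itself. For example, two round spheres of dimension at least $2$ glued at a point satisfy a global inequality of the form \eqref{eq:sobolev}. They have no local Poincar\'e inequality at the junction, and Sobolev-to-Lipschitz actually fails there: the indicator function of one sphere has zero energy. What distinguishes that example is that the smooth part is disconnected, so the distance restricted to it is not its length distance. Neither of your detours addresses this. Showing that test-plan curves avoid $\overline X\setminus M$ does not supply it, and recovering $d$ as a supremum over eigenfunction combinations with $|\nabla f|\le1$ is unproved.

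The ``main obstacle'' you name disappears once you use the definition of $\overline X$. It is the completion of $(M,d_g)$, where $d_g$ is the length distance of $g$, so $d|_{M\times M}=d_g$ exactly. Every pair of points of $M$ is therefore joined by curves in $M$ of length arbitrarily close to their $d$-distance, and no almost-every-pair statement is needed. The argument is then as follows. If $f\in W^{1,2}(X)$ has $|\nabla f|\le1$ a.e., the local Sobolev-to-Lipschitz property of the smooth metric gives a continuous, locally $1$-Lipschitz representative on $M$. Integrating along piecewise smooth curves in $M$ shows that this representative is $1$-Lipschitz for $d_g$. It therefore extends uniquely to a $1$-Lipschitz function on $\overline X$, which represents $f$ because $\mu(\overline X\setminus M)=0$. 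This is exactly the paper's justification. The same observation gives $d_{\mathcal E}\le d$. Together with locality of minimal weak upper gradients and Lemma~\ref{lem:removal}, it also identifies the Cheeger energy with $\mathcal E$ directly, so the quasi-regularity and intrinsic-distance discussion in step 1 can be dropped. With this replacement, your proposal becomes a self-contained proof of the criterion the paper cites.
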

This is Honda's criterion for almost smooth spaces \cite{Honda}, in
the form with ordinary capacity explained in
\cite[Corollary 8]{Szekelyhidi}; see also
\cite[Proposition 2.3]{CCHSTT} and \cite[Lemma 3.3]{FGS}.
The remaining hypotheses of the criterion are the Sobolev-to-Lipschitz
property, the compactness of the energy embedding, and the Ricci lower
bound on the regular locus. The first follows from the local
Sobolev-to-Lipschitz property of smooth metrics together with the fact
that the distance is the intrinsic length distance, and the second
follows from the analytic background above. We note that this
criterion does not identify the metric completion with the algebraic
or analytic space $X$.

\subsection{Holomorphic equivalence}
\begin{theorem} \label{thm:szek}
Let $X$ be normal and projective, and let $\omega$ have bounded local
potentials and belong to $c_1(L)$ for an ample line bundle $L$. Assume
that $\omega$ is K\"ahler--Einstein on $ X_{\reg}$, that
$\omega\ge c\theta_{FS}$, that its completion is noncollapsed RCD
with curvature parameter equal to the Einstein constant,
and that $\omega^n/\theta_{FS}^n\in L^{1+\delta}(\theta_{FS}^n)$ for
some $\delta>0$. Then its metric completion is homeomorphic to $X$.
\end{theorem}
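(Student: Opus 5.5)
The statement is Theorem~\ref{thm:szek}, which is Sz\'ekelyhidi's theorem \cite{Szekelyhidi}. I would organize the proof as a recollection of his argument, checking that each input is available under the stated hypotheses. The strategy is to construct a map $\Phi:\overline X\to X$ extending the identity on $X_{\reg}$ and to show it is a homeomorphism. Since $\overline X$ is compact and $X$ is Hausdorff, it is enough to prove that $\Phi$ is well defined, continuous, bijective and surjective.

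\emph{Step 1: a map from the completion to the variety.} Because $\omega\ge c\,\theta_{FS}$, the identity map $(X_{\reg},d)\to(X,d_{FS})$ is Lipschitz. It therefore extends uniquely to a Lipschitz map $\Phi:\overline X\to X$. Its image is compact and contains the dense set $X_{\reg}$, so $\Phi$ is surjective. The boundedness of the potentials and the $L^{1+\delta}$ density bound are used here as well. They give a volume lower bound on metric balls centred at points of $\overline X\setminus X_{\reg}$, through the Sobolev and diameter estimates of \cite{GPSS1,GPSS2}.

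\emph{Step 2: injectivity via peak sections.} I would show that distinct points $p\ne q$ of $\overline X$ are separated by holomorphic sections of $L^k$ for some large $k$. The plan has three parts.
\begin{itemize}
\item Since $\overline X$ is noncollapsed $\RCD(\lambda,2n)$ and is Einstein on the regular part, tangent cones at every point are metric cones $C(Z)$. By the Cheeger--Colding--Tian type structure theory, their singular sets have codimension at least four. For the K\"ahler--Einstein case this codimension bound follows from the $\epsilon$-regularity of Cheeger--Colding--Naber adapted to RCD limits.
\item On these cones I would follow Donaldson--Sun and construct a cut-off holomorphic section of the trivial bundle with Gaussian weight $e^{-r^2/2}$.
\item I would then transplant this section to $(X,kL,k\omega)$ at scales where $(\overline X,\sqrt k\,d)$ is Gromov--Hausdorff close to the cone, and correct it by H\"ormander's $L^2$ estimate on $X_{\reg}$. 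This is legitimate because $\omega$ has bounded potentials, $\ric=\lambda\omega$, and $X_{\reg}$ has complement of zero capacity by the cutoffs \eqref{eq:cutoffs}. The correction produces holomorphic sections on $X_{\reg}$, which extend across the singular set by normality.
\end{itemize}
This gives a partial $C^0$ bound $\rho_k\ge b>0$ at $p$, together with a section peaked at $p$ and small at $q$.

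\emph{Step 3: conclusion.} The sections are continuous on $\overline X$, with gradients controlled by the Bochner formula and Moser iteration, which are available through the Sobolev inequality \eqref{eq:sobolev}. Because they factor through $\Phi$, the separation in Step 2 forces $\Phi(p)\ne\Phi(q)$. So $\Phi$ is a continuous bijection from a compact space to a Hausdorff space, hence a homeomorphism. Projectivity of $X$ and ampleness of $L$ ensure that sections of $L^k$ realize $X$ via $\Phi$.

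\emph{Main obstacle.} The hard step is Step 2, specifically the tangent cone analysis. One must know that singular sets of tangent cones have codimension at least four (at least codimension two with the Hessian/holomorphic control) so that the H\"ormander argument works with Lipschitz cut-offs near the cone singularities. One must also know that the complex structure passes to the limit cones. I would try first to obtain both from the RCD hypothesis combined with the Einstein condition on $X_{\reg}$, following Sz\'ekelyhidi's density-gap argument.
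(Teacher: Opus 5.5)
Your global architecture is the right one. The Lipschitz map $\Phi$ coming from $\omega\ge c\theta_{FS}$, injectivity via Donaldson--Sun peak sections, and the compact-to-Hausdorff conclusion are exactly how the paper argues its local version (proof of Corollary~\ref{cor:main}); for Theorem~\ref{thm:szek} itself the paper only cites \cite[Theorem 17]{Szekelyhidi}.

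The gap is the first bullet of your Step~2. The Cheeger--Colding--Tian and Cheeger--Naber codimension-four results, and the $\epsilon$-regularity behind them, concern smooth manifolds with bounded Ricci curvature and their limits. Here $\overline X$ is not assumed to be such a limit, and the Einstein equation is known only on $X_{\reg}$. A priori a point $z\in\Gamma=\overline X\setminus X_{\reg}=\Phi^{-1}(X_{\mathrm{sing}})$ could have volume density arbitrarily close to $1$, or even a Euclidean tangent cone. Near such a point you have neither $\epsilon$-regularity nor smooth convergence of the rescaled K\"ahler structures. So you know neither that the regular part of a tangent cone is a smooth Ricci-flat K\"ahler manifold to which a Gaussian section can be transplanted, nor that its singular set has zero capacity. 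Your last paragraph proposes to obtain this from the RCD hypothesis and the Einstein condition on $X_{\reg}$ alone. Those hypotheses give no control of the metric near $\Gamma$, so this cannot work as stated.

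The missing idea is the density gap: there is $\epsilon>0$ with $\nu(z)\le1-\epsilon$ for all $z\in\Gamma$, hence $\mathcal R_\epsilon(\overline X)=X_{\reg}$. This is \cite[Propositions 23--24]{Szekelyhidi}, localized in the paper as Lemma~\ref{lem:local-density-gap}. Its proof is complex-analytic rather than metric. At almost-Euclidean scales, holomorphic functions and charts are produced by H\"ormander's estimate and extended over $X$ by normality. A finite-order vanishing and three-annulus estimate for holomorphic functions vanishing on $X_{\mathrm{sing}}$ then rules out points of $\Gamma$ with density close to $1$. This is where the $L^{1+\delta}$ hypothesis genuinely enters; in your Step~1 it does nothing beyond what noncollapsing already provides.

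Only after the gap is known can you:
\begin{enumerate}
\item apply $\epsilon$-regularity on $\mathcal R_\epsilon$;
\item obtain smooth convergence to Ricci-flat K\"ahler regular parts on iterated tangent cones;
\item exclude codimension-two cones such as $\mathbb R^{2n-2}\times C(S^1_\gamma)$;
\item conclude that the cone singular sets have zero capacity.
\end{enumerate}
Zero capacity is all the peak-section argument needs; codimension four is not required for injectivity. With this step supplied, your Steps~2 and~3 go through.
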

This is the statement of \cite[Theorem 17]{Szekelyhidi} for polarized
K\"ahler--Einstein metrics. We note that the identification of the
metric regular set is an additional geometric conclusion, and is not a
formal consequence of the homeomorphism. We shall not apply this
theorem directly to the metrics in Theorem~\ref{thm:main}. Instead,
Section~\ref{subsec:local-singularities} establishes the local version
needed here: bounded local potentials replace the global polarization,
\cite[Theorem 1.2]{CCHSTT} provides the K\"ahler current bound, and
the holomorphic charts of \cite{LiuSzekelyhidi} replace the use of
smooth convergence of K\"ahler--Einstein metrics.

\section{Sobolev to Lipschitz}\label{sec:bootstrap}

Throughout Sections~\ref{sec:bootstrap}, \ref{sec:riccientropy}
and~\ref{sec:proof}, the Laplacian is the real Laplacian
$\Delta=\operatorname{div}\nabla$, and the eigenvalue equation is
written as $-\Delta f=\lambda f$. In particular, $\Delta=2\Delta_\omega$,
where $\Delta_\omega f=\tr_\omega(\ddc f)$. All tensor norms and
volume measures in these sections are taken with respect to the
Riemannian metric.

\subsection{A K\"ahler refinement of the differential Kato inequality}
For a smooth real function $f$, we write $H=\nabla^2f$ for its real
Hessian and decompose
\begin{equation}\label{eq:hesssplit}
 H=B+B',\qquad
 B(V,W)=\frac{H(V,W)+H(JV,JW)}2.
\end{equation}
Here $B$ and $B'$ are the $J$-invariant and the $J$-anti-invariant
parts of the real Hessian, respectively. These two parts are
orthogonal to each other.

\begin{lemma}\label{lem:katoalgebra}
The following inequality holds almost everywhere:
\begin{equation}\label{eq:refinedkato}
 |H|^2\ge 2\bigl|\nabla|\nabla f|\bigr|^2
                -4|B|\bigl|\nabla|\nabla f|\bigr|.
\end{equation}
\end{lemma}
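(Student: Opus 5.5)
On the open set $\{\nabla f\neq0\}$, I would prove \eqref{eq:refinedkato} pointwise by an algebraic argument using the $J$-anti-invariance of $B'$. On $\{\nabla f=0\}$, I would use the standard fact that $\nabla|\nabla f|=0$ almost everywhere there. The second part is routine: $|\nabla f|$ is locally Lipschitz, and a Lipschitz function has vanishing gradient almost everywhere on any of its level sets. So on this set the right-hand side of \eqref{eq:refinedkato} vanishes almost everywhere and the inequality is trivial.

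On $\{\nabla f\neq0\}$, set $e=\nabla f/|\nabla f|$. Then
\[
 \nabla|\nabla f|=H(e,\cdot)=:v,
\]
and the decomposition \eqref{eq:hesssplit} gives $v=B(e,\cdot)+B'(e,\cdot)$. By orthogonality of the two parts, $|H|^2=|B|^2+|B'|^2\ge|B'|^2$. The key step is to show
\[
 |B'|^2\ge2|B'(e,\cdot)|^2 .
\]
To prove it, I would use anti-invariance, $B'(JV,JW)=-B'(V,W)$. Substituting $V=Je$ and using $J^2=-1$ gives $B'(Je,W)=B'(e,JW)$. Since $J$ is an isometry, this yields $|B'(Je,\cdot)|=|B'(e,\cdot)|$.

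Next I would complete the orthonormal pair $e,Je$ (orthogonal because $g(e,Je)=\omega(e,e)=0$) to an orthonormal basis $\{e_i\}$. Since $B'$ is symmetric, $|B'|^2=\sum_i|B'(e_i,\cdot)|^2$. Keeping only the two terms for $e$ and $Je$ gives the key step.

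To conclude, I would expand
\[
 |B'(e,\cdot)|^2=|v-B(e,\cdot)|^2\ge|v|^2-2|v|\,|B(e,\cdot)|\ge|v|^2-2|v|\,|B|,
\]
using $|B(e,\cdot)|\le|B|$. Combining the three inequalities gives
\[
 |H|^2\ge2|v|^2-4|B|\,|v|,
\]
which is \eqref{eq:refinedkato} with $v=\nabla|\nabla f|$.

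The only genuinely nontrivial point is the factor $2$ in the key step, which is where the Kähler structure enters; everything else is Cauchy--Schwarz. The "almost everywhere" qualifier is needed only to handle the critical set of $f$.
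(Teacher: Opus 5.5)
Your proposal is correct and follows essentially the same route as the paper, which also uses the anti-invariance $B'J=-JB'$ with $e=\nabla f/|\nabla f|$ to get $|B'|^2\ge|B'e|^2+|B'Je|^2=2|B'e|^2$, writes $B'e=\nabla|\nabla f|-Be$, and handles the critical set by the a.e.\ vanishing of the gradient of the Lipschitz function $|\nabla f|$ on its zero set. The only difference is cosmetic: you drop $|B|^2$ at the outset via $|H|^2\ge|B'|^2$, whereas the paper carries it along and discards it only in the final estimate.
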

\begin{proof}
We identify a symmetric tensor with the corresponding self-adjoint
endomorphism. Under this identification, $B'J=-JB'$. If $e$ is a unit
vector, then $e,Je$ are orthonormal, and
\[
 |B'|^2\ge |B'e|^2+|B'Je|^2=2|B'e|^2.
\]
At a point where $|\nabla f|>0$, we let $e=\nabla f/|\nabla f|$.
For every vector $V$, we have
\[
 d|\nabla f|(V)=\frac{\langle\nabla_V\nabla f,\nabla f\rangle}{|\nabla f|}
       =H(V,e)=\langle He,V\rangle,
\]
and therefore $He=\nabla|\nabla f|$. Consequently,
\begin{align*}
 |H|^2=|B'|^2+|B|^2
 &\ge2|He-Be|^2+|B|^2\\
 &\ge2\bigl|\nabla|\nabla f|\bigr|^2
          -4|B|\bigl|\nabla|\nabla f|\bigr|.
\end{align*}
Finally, the weak gradient of the locally Lipschitz function
$|\nabla f|$ vanishes almost everywhere on its zero set. This proves
that the inequality holds almost everywhere.
\end{proof}

\subsection{The logarithmic differential inequality}
We set
\begin{equation}\label{eq:sh}
 h=\log(1+|\nabla f|^2).
\end{equation}
The functions $h$ and $e^{h/2}$ are smooth, even at points where
$\nabla f=0$. A direct computation gives
\begin{equation}\label{eq:gradhexact}
 \nabla h=\frac{2H(\nabla f)}{1+|\nabla f|^2},\qquad
 |\nabla h|^2
 =\frac{4|H(\nabla f)|^2}{(1+|\nabla f|^2)^2}
 \le\frac{4|H|^2}{1+|\nabla f|^2}.
\end{equation}
The last bound alone does not yield the required global energy
estimate, since at this stage no global $L^2$ bound for the full
Hessian is available.

\begin{lemma} \label{lem:drift}
Suppose that $-\Delta f=\lambda f$ and
$\ric(g)\ge-\rho g$ on a smooth K\"ahler manifold, where
$\lambda\ge0$ and $\rho\ge0$. Then, with the notation above,
\begin{equation}\label{eq:drift}
 \Delta h\ge-C(1+\rho)
       -Ce^{-h/2}|B||\nabla h|-Ce^{-h}|B|^2.
\end{equation}
Here $C$ depends on an upper bound for $\lambda$. Moreover, the
inequality also holds at the critical points of $f$.
\end{lemma}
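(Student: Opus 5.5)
We compute $\Delta h$ directly from the Bochner formula and then bound the bad terms using the refined Kato inequality of Lemma~\ref{lem:katoalgebra}. Write $u=1+|\nabla f|^2=e^h$. Then
\[
 \Delta h=\frac{\Delta u}{u}-\frac{|\nabla u|^2}{u^2},\qquad
 \Delta u=2|H|^2+2\langle\nabla f,\nabla\Delta f\rangle+2\ric(\nabla f,\nabla f).
\]
The eigenvalue equation gives $\langle\nabla f,\nabla\Delta f\rangle=-\lambda|\nabla f|^2$, and the Ricci bound gives $\ric(\nabla f,\nabla f)\ge-\rho|\nabla f|^2$. Since $|\nabla f|^2/u\le1$, we obtain
\[
 \Delta h\ge \frac{2|H|^2}{u}-\frac{|\nabla u|^2}{u^2}-2(\lambda+\rho).
\]
This identity is valid everywhere because $u\ge1$ is smooth. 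This is why $\log(1+|\nabla f|^2)$ is used rather than $\log|\nabla f|$, and it is what takes care of the critical points.

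The key step is to compare $|\nabla u|^2/u^2$ with $2|H|^2/u$. We have $\nabla u=2|\nabla f|\,\nabla|\nabla f|$ away from the zero set of $\nabla f$, so
\[
 \frac{|\nabla u|^2}{u^2}=\frac{4|\nabla f|^2\,|\nabla|\nabla f||^2}{u^2}\le\frac{4\,|\nabla|\nabla f||^2}{u}\cdot\frac{|\nabla f|^2}{u}.
\]
Inserting \eqref{eq:refinedkato}, that is $2|H|^2\ge4|\nabla|\nabla f||^2-8|B||\nabla|\nabla f||$, we get
\[
 \frac{2|H|^2}{u}-\frac{|\nabla u|^2}{u^2}\ge\frac{4|\nabla|\nabla f||^2}{u}\Bigl(1-\frac{|\nabla f|^2}{u}\Bigr)-\frac{8|B||\nabla|\nabla f||}{u}\ge-\frac{8|B||\nabla|\nabla f||}{u}.
\]
This is exactly where the K\"ahler condition enters. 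In the real setting, Kato's inequality only gives $|H|^2\ge|\nabla|\nabla f||^2$, and that loses the term. The factor $2$ gained from $J$-anti-invariance makes the positive part $2|H|^2/u$ dominate the negative part.

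The remaining task, which is the only delicate bookkeeping, is to express the error through $|\nabla h|$ and $B$. From \eqref{eq:gradhexact}, $\nabla h=2|\nabla f|\nabla|\nabla f|/u$, so
\[
 |\nabla|\nabla f||=\frac{u|\nabla h|}{2|\nabla f|}.
\]
This is singular where $|\nabla f|$ is small, so I would split into two regions. On $\{|\nabla f|\ge1\}$ we have $u/|\nabla f|\le2|\nabla f|\le2u^{1/2}$. Hence the error is at most $8|B||\nabla h|u^{-1/2}=8e^{-h/2}|B||\nabla h|$, which is the middle term of \eqref{eq:drift}. On $\{|\nabla f|<1\}$ we have $u\le2$, and I would not use the Kato route at all. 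Instead, $|\nabla u|^2/u^2\le4|H|^2|\nabla f|^2/u^2$, so directly
\[
 \frac{2|H|^2}{u}-\frac{|\nabla u|^2}{u^2}\ge\frac{2|H|^2}{u^2}\ge0.
\]
In this region nothing is lost, so no $B$-terms appear there.

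Combining the two regions gives \eqref{eq:drift} with $C$ depending on $\lambda$, and even without the $e^{-h}|B|^2$ term. If some absorption is needed, for instance where the regions are blended smoothly, then Young's inequality $e^{-h/2}|B||\nabla h|\le\tfrac12|\nabla h|^2+Ce^{-h}|B|^2$ produces the last term.

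Critical points fall in the second region, so the inequality holds there as well, and everywhere by smoothness of $h$. The main obstacle is the first region: the refined Kato inequality has to be used sharply, with constant exactly $2$, so that $(1-|\nabla f|^2/u)\ge0$ absorbs the whole quadratic gradient term.
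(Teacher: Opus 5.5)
Your proof is correct. It shares the paper's core: Bochner's formula for $h=\log(1+|\nabla f|^2)$, the refined Kato inequality \eqref{eq:refinedkato} on $\{|\nabla f|\ge1\}$, where the factor $1-|\nabla f|^2/u=1/u\ge0$ absorbs the quadratic gradient term, and the conversion $8|B|\bigl|\nabla|\nabla f|\bigr|/u=4|B||\nabla h|/|\nabla f|\le Ce^{-h/2}|B||\nabla h|$.

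You differ from the paper on the small-gradient region. The paper uses the refined Kato inequality on all of $\{|\nabla f|>0\}$. It then completes the square on $\{0<|\nabla f|<1\}$, which produces the $-8e^{-h}|B|^2$ term, and it checks critical points separately via $\Delta h=2|H|^2$. You use only Cauchy--Schwarz there, $|\nabla u|^2=4|H(\nabla f)|^2\le4|H|^2|\nabla f|^2$. This makes the Hessian contribution nonnegative on $\{|\nabla f|\le1\}$ and covers the critical points automatically. The result is simpler and slightly sharper,
\[
 \Delta h\ge-2(\lambda+\rho)-8e^{-h/2}|B||\nabla h|,
\]
with no $e^{-h}|B|^2$ term at all. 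Nothing changes downstream, since Young's inequality on the mixed term reintroduces a $|B|^2$ contribution in Lemmas~\ref{lem:logenergy} and~\ref{lem:bootstrap} anyway.

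There is one arithmetic slip. The correct identity is
\[
 \frac{2|H|^2}{u}-\frac{4|H|^2|\nabla f|^2}{u^2}
 =\frac{2|H|^2\bigl(1-|\nabla f|^2\bigr)}{u^2},
\]
so the bound $\ge2|H|^2/u^2$ you wrote is false away from critical points. The expression is still nonnegative for $|\nabla f|\le1$, so your conclusion stands. Your closing remark about Young's inequality is not needed.
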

\begin{proof}
Recall Bochner's identity
\[
 \frac12\Delta|\nabla f|^2
 =|H|^2+\ric(\nabla f,\nabla f)-\lambda|\nabla f|^2.
\]
On $\{|\nabla f|>0\}$, we use the identity
\[
 \bigl|\nabla(|\nabla f|^2)\bigr|^2
 =4|\nabla f|^2\bigl|\nabla|\nabla f|\bigr|^2.
\]
Combining Lemma~\ref{lem:katoalgebra} with $e^h-|\nabla f|^2=1$, we
obtain
\begin{equation}\label{eq:rawdrift}
 \begin{split}
 \Delta h
 &=e^{-h}\Delta|\nabla f|^2
       -e^{-2h}\bigl|\nabla(|\nabla f|^2)\bigr|^2\\
 &\ge4e^{-2h}\bigl|\nabla|\nabla f|\bigr|^2
      -8e^{-h}|B|\bigl|\nabla|\nabla f|\bigr|-2(\lambda+\rho).
 \end{split}
\end{equation}
On $\{|\nabla f|\ge1\}$, the negative mixed term satisfies
\[
 8e^{-h}|B|\bigl|\nabla|\nabla f|\bigr|
 =\frac{4|B|}{|\nabla f|}|\nabla h|
 \le4\sqrt2 e^{-h/2}|B||\nabla h|.
\]
On $\{0<|\nabla f|<1\}$, completing the square gives
\begin{align*}
 4e^{-2h}\bigl|\nabla|\nabla f|\bigr|^2
       -8e^{-h}|B|\bigl|\nabla|\nabla f|\bigr|
 &\ge-4|B|^2\\
 &\ge-8e^{-h}|B|^2.
\end{align*}
These two estimates prove \eqref{eq:drift} away from the critical
set. At points where $\nabla f=0$, a direct differentiation together
with Bochner's identity gives $\Delta h=2|H|^2\ge0$, and hence the
same inequality holds there.
\end{proof}

\subsection{Lipschitz property from Sobolev estimates}
\begin{lemma} 
\label{lem:bootstrap}
Suppose $\omega$ satisfies the assumptions in Theorem \ref{thm:main}. Let $M=X_{\reg}$ and let 
$f\in W^{1,2}(X)$ be the eigenfunction satisfying  $-\Delta f=\lambda f$. If
\begin{equation}\label{eq:seedtarget}
 \int_M(h^2+|\nabla h|^2)\,dV_g<\infty,
 \qquad \int_M|B|^2\,dV_g<\infty,
\end{equation}
then $e^{h/2}\in W^{1,2}(X)\cap L^\infty(X)$. In particular,
$f$ extends to a Lipschitz function on the metric completion.
\end{lemma}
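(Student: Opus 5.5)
Given the assumptions \eqref{eq:seedtarget}, the plan is a Moser iteration for the subsolution $h$ of Lemma~\ref{lem:drift}. The drift terms are controlled by $|B|$, which is only in $L^2$, so we work with $w=e^{h/2}=(1+|\nabla f|^2)^{1/2}$. First note that $w\in W^{1,2}(X)$. Indeed $w^2=1+|\nabla f|^2$ is integrable because $f\in W^{1,2}(X)$. Moreover $\nabla w=\tfrac12 w\nabla h$ and $|\nabla h|\le 2|H|/w$, so $|\nabla w|\le |H(\nabla f)|/w\le |H|$. Alternatively, $|\nabla w|^2=\tfrac14 w^2|\nabla h|^2$, and one can bound $\int w^2|\nabla h|^2$ using a truncation argument together with \eqref{eq:seedtarget}.

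A cleaner route avoids the full Hessian. From \eqref{eq:drift} we compute
\[
\Delta w=\tfrac12 w\Delta h+\tfrac14 w|\nabla h|^2\ge -C(1+\rho)w-Ce^{-h/2}w|B||\nabla h|-Ce^{-h}w|B|^2+\tfrac14 w|\nabla h|^2 .
\]
Absorb the middle term by Cauchy--Schwarz, $Cw^{0}|B||\nabla h|\le \tfrac18 w|\nabla h|^2+C'|B|^2w^{-1}$. Since $w\ge1$, this gives
\[
\Delta w\ge -C w - C|B|^2 .
\]
Since $\rho=1$, this holds on $M$ in the classical sense, with $F:=Cw+C|B|^2$. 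The term $C|B|^2$ is only $L^1$, not $L^2$, so Lemma~\ref{lem:removal} does not apply directly. Instead we run the removal argument with bounded test functions. We test against $\chi_k^2 T_N(w)^{\beta}$ and use $\int|\nabla\chi_k|^2\to0$. The term involving $\chi_k\nabla\chi_k$ is handled by Cauchy--Schwarz with $\|\nabla w\|_{L^2}$ finite: by \eqref{eq:seedtarget}, $\int w^2|\nabla h|^2\le \sup$-truncated bounds, and for the iteration we only need $w$ truncated. This yields a global weak inequality $\int\langle\nabla w,\nabla\zeta\rangle\le\int(Cw+C|B|^2)\zeta$ for bounded nonnegative $\zeta\in W^{1,2}(X)$.

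The main obstacle is the $L^\infty$ bound, since the source $|B|^2$ lies only in $L^1$. Note that $B$ is the $J$-invariant Hessian, i.e. essentially $\ddc f$. Here I would exploit Kähler structure. Since $\tr_\omega B=\tfrac12\Delta f=-\tfrac{\lambda}{2}f$, the Kähler Bochner formula for the $(1,1)$-part gives $|B|^2\le \tfrac12\Delta|\partial f|^2+\dots$. More precisely, in the complex Bochner identity $\Delta|\partial f|^2=|\nabla\bar\nabla f|^2+|\nabla\nabla f|^2+\ric(\partial f,\bar\partial f)-\lambda|\partial f|^2$, the term $|B|^2$ appears as a Laplacian of $w^2$ plus bounded terms. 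So I would replace the source term: $v=w^2$ (or $w+\kappa w^2$) satisfies a cleaner inequality $\Delta v\ge -Cv$ in the weak sense on $X$. Indeed $|\nabla\bar\nabla f|^2\ge0$ and $|\nabla\nabla f|^2\ge0$ can be dropped, after checking that the pure $|\nabla\nabla f|$ contribution in $\Delta|\nabla f|^2$ is nonnegative. Testing is justified by the cutoff argument, using $\int|B|^2<\infty$ and $\int w^2|\nabla h|^2$ to control boundary errors.

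Once we have $\Delta v\ge -Cv$ weakly with $v\in L^1$, and indeed $v\in L^{q'}$ for some $q'>1$ by a first step using $h^2\in L^1$ and the Sobolev gain, standard Moser iteration applies. This uses test functions $T_N(v)^{\beta}$, the Sobolev inequality \eqref{eq:sobolev} with exponent $q>1$, and letting $N\to\infty$. It gives $\|v\|_\infty\le C\|v\|_{L^1}$, hence $e^{h/2}\in L^\infty$, and the energy step gives $e^{h/2}\in W^{1,2}(X)$. Finally, $|\nabla f|$ is bounded on $M$. Since $d$ is the length distance of $g$ on $M$, which is dense in $\overline X$, $f$ is Lipschitz along curves in $M$. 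It therefore extends uniquely to a Lipschitz function on $\overline X$.
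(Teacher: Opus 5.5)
There is a genuine gap. Everything hinges on $e^{h/2}\in W^{1,2}(X)$, i.e.\ $\int_M e^h|\nabla h|^2\,dV_g<\infty$, and you never establish it.

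Your first justification, $|\nabla w|\le|H|$, is correct pointwise but does not help. No global $L^2$ bound for the full Hessian is available on $M$: testing Bochner's formula against cutoffs produces an error involving $H(\nabla f)=\tfrac12 e^h\nabla h$, which is even less controlled. The alternative ``truncation argument'' is never carried out.

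The later steps are circular:
\begin{enumerate}
\item With the test function $\chi_k^2T_N(w)^\beta$, the cutoff error $\int\chi_kT_N(w)^\beta\langle\nabla\chi_k,\nabla w\rangle$ is controlled only by assuming $\|\nabla w\|_{L^2}<\infty$.
\item The claimed gain $v=w^2\in L^{q'}$, $q'>1$, ``from $h^2\in L^1$ and the Sobolev gain'' again presupposes $w\in W^{1,2}(X)$. Integrability of $h^2$ gives nothing beyond $e^h\in L^1$.
\item Globalizing $\Delta v\ge-Cv$ across $\overline X\setminus M$ needs control of $\nabla v=w^2\nabla h$ near the singular set, which is even less available.
\end{enumerate}

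The missing idea is to work with the inequality for $h$, since the hypothesis controls $\nabla h$, not $\nabla w$. Test \eqref{eq:drift} against $\chi_k\Phi_R(h)$, where $\Phi_R(h)=e^h/(1+e^h/R)$ is bounded by $R$.
\begin{itemize}
\item The cutoff error is at most $R\|\nabla\chi_k\|_2\|\nabla h\|_2$, which tends to $0$ as $k\to\infty$ for fixed $R$.
\item The identity $\Phi_R^2=e^h\Phi_R'$ lets Young's inequality absorb $\Phi_Re^{-h/2}|B||\nabla h|$ into $\tfrac12\Phi_R'|\nabla h|^2+C|B|^2$. This is exactly where $B\in L^2$ enters.
\item Letting $k\to\infty$ first and only then $R\to\infty$ gives, by monotone convergence, $\int_Me^h|\nabla h|^2\le C\int_M(e^h+|B|^2)<\infty$.
\end{itemize}

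Your ``main obstacle'' is also misplaced. Once $w\in W^{1,2}(X)$, the real Bochner identity with $|H(\nabla f)|\le|H||\nabla f|$ and $\ric\ge-g$ gives $\Delta w\ge-(\lambda+1)w$ on $M$, with no $|B|^2$ source at all. Lemma~\ref{lem:removal} then extends this to bounded nonnegative test functions in $W^{1,2}(X)$, because the right-hand side is in $L^2$. Moser iteration starting from $w\in L^2$ is then routine. Neither the K\"ahler Bochner formula nor the passage to $w^2$ is needed; the $J$-invariant Hessian $B$ matters only in the energy step.
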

\begin{proof}
We point out that the Ricci lower bound in the statement is a tensor
inequality on the smooth locus $M$; we do not assume any tensorial
Ricci curvature bound on the singular completion. First,
Lemma~\ref{lem:removal} shows that $h$ lies in $W^{1,2}(X)$.
Since $e^h\in L^1$ and $|B|\in L^2$, Lemma~\ref{lem:drift} gives
\begin{equation}\label{eq:targetdrift}
 \Delta h\ge-C-Ce^{-h/2}|B||\nabla h|-Ce^{-h}|B|^2
 \quad\hbox{on }M,
\end{equation}
where the right hand side is integrable.

For $R>0$, we define the bounded increasing function
\begin{equation}\label{eq:nonlineartest}
 \Phi_R(h)=\frac{e^h}{1+e^h/R}.
\end{equation}
We record its derivative and the cancellation needed below:
\begin{equation}\label{eq:testidentities}
 \begin{gathered}
 \Phi_R'=\frac{e^h}{(1+e^h/R)^2},\qquad
 \frac{\Phi_R^2}{e^h\Phi_R'}=1,\\
 0\le\Phi_R\le\min(e^h,R),\qquad e^{-h}\Phi_R\le1.
 \end{gathered}
\end{equation}
For fixed $R$, we test \eqref{eq:targetdrift} against the compactly
supported smooth nonnegative function $\chi_k\Phi_R(h)$. Integrating
by parts, with the sign convention for $\Delta$, we obtain
\begin{align}
 \int_M\chi_k\Phi_R'|\nabla h|^2
 &\le C\int_M\chi_k\Phi_R
 +C\int_M\chi_k\Phi_Re^{-h/2}|B||\nabla h|
 +C\int_M\chi_k\Phi_Re^{-h}|B|^2\notag\\
 &\quad-\int_M\Phi_R\langle\nabla\chi_k,\nabla h\rangle.
 \label{eq:bootstrapibp}
\end{align}
Here and in the rest of this proof, all integrals are taken with
respect to $dV_g$. The last term has no definite sign, but its
absolute value is bounded by $R\|\nabla\chi_k\|_2\|\nabla h\|_2$.
By Young's inequality and the middle identity in
\eqref{eq:testidentities}, we have
\[
 C\chi_k\Phi_Re^{-h/2}|B||\nabla h|
 \le\frac12\chi_k\Phi_R'|\nabla h|^2+C\chi_k|B|^2.
\]
It follows that
\begin{equation}\label{eq:fixedR}
 \frac12\int_M\chi_k\Phi_R'|\nabla h|^2
 \le C\int_M(e^h+|B|^2)+R\|\nabla\chi_k\|_2\|\nabla h\|_2.
\end{equation}
Since $\Phi_R'\le R/4$, the integrand on the left is integrable for
fixed $R$. We first let $k\to\infty$, using the vanishing of the
ordinary $2$-capacity, and only then let $R\to\infty$. Since the
functions $\Phi_R'(h)$ increase to $e^h$, the monotone convergence
theorem gives
\begin{equation}\label{eq:gradientenergy}
 \int_M e^h|\nabla h|^2\le C\int_M(e^h+|B|^2)<\infty.
\end{equation}
Now $|\nabla(e^{h/2})|^2=e^h|\nabla h|^2/4$ and
$\int_M|e^{h/2}|^2=\int_M e^h<\infty$, so
Lemma~\ref{lem:removal} proves $e^{h/2}\in W^{1,2}(X)$.
This is the crucial gain in the argument: neither a weighted cutoff
estimate nor a $W^{1,2+\delta}$ capacity statement has been used.

Next, the Bochner identity and the inequality
$|H(\nabla f)|^2\le |H|^2|\nabla f|^2$ yield
\begin{align}
 \Delta(e^{h/2})
 &=e^{-h/2}\bigl(|H|^2+\ric(\nabla f,\nabla f)
                          -\lambda|\nabla f|^2\bigr)\notag\\
 &\quad-e^{-3h/2}|H(\nabla f)|^2\notag\\
 &\ge -(\lambda+1)e^{h/2}.\label{eq:zbochner}
\end{align}
Indeed, the Hessian terms contribute at least $e^{-3h/2}|H|^2\ge0$,
and $e^{-h/2}|\nabla f|^2\le e^{h/2}$.
Since $e^{h/2}\in W^{1,2}(X)$ and
$(\lambda+1)e^{h/2}\in L^2$, Lemma~\ref{lem:removal} extends
\eqref{eq:zbochner} to all bounded nonnegative test functions in
$W^{1,2}(X)$.

For completeness, we include the standard Moser iteration. Let
$w_L=\min(e^{h/2},L)$, with $L\ge1$, and use $w_L^{m-1}$ as a test
function, where $m\ge2$. At each stage where $e^{h/2}\in L^m$, this
gives
\[
 \frac{4(m-1)}{m^2}\int_M|\nabla w_L^{m/2}|^2
 \le (\lambda+1)\int_M e^{h/2}w_L^{m-1}\le (\lambda+1)\int_M e^{mh/2}.
\]
Applying \eqref{eq:sobolev} and then letting $L\to\infty$, we obtain
\[
 \|e^{h/2}\|_{mq}\le
 \left[C_S\left(1+\frac{(\lambda+1)m^2}{4(m-1)}\right)\right]^{1/m}
 \|e^{h/2}\|_m.
\]
Starting from $m=2$ and iterating with $m=2q^j$, we see that the
product of the constants converges, since $q>1$. Hence
\begin{equation}\label{eq:mosergradient}
 \|e^{h/2}\|_\infty\le C(\lambda,C_S,q)\|e^{h/2}\|_2.
\end{equation}
In particular, the gradient of $f$ is bounded on $M$. By integrating
along piecewise smooth curves, we see that $f$ is Lipschitz with
respect to the intrinsic length distance, and hence it extends
uniquely to a Lipschitz function on $\overline X$.
\end{proof}

\section{Monge--Amp\`ere approximations}\label{sec:approximation}

The main goal of Section \ref{sec:approximation}, Section \ref{sec:riccientropy} and Section \ref{sec:bounding}   is to develop technical preparations to prove the following proposition for twisted K\"ahler-Einstein metrics. 

\begin{proposition}[The smooth-twist case]\label{prop:smoothtwist}
Let $X$ be a   compact normal K\"ahler space of complex
dimension $n\ge2$ with klt singularities. Let $\theta$ be a smooth
K\"ahler form and $\Omega$ a smooth adapted volume measure. Suppose
that $\omega\in\cK_{\theta,\Omega}(p,K)$ for some $p>1$ is smooth and
K\"ahler on $ X_{\reg}$ and satisfies
\[
 \ric(\omega)=-\omega+\beta\qquad\text{on } X_{\reg},
\]
where $\beta$ is a smooth non-negative real closed $(1,1)$-form with smooth
local potentials on all of $X$.  
Then the metric completion of $(X, \omega, \omega^n)$ 
is a compact noncollapsed $\RCD(-1,2n)$-space.
\end{proposition}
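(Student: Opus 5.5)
We will deduce Proposition~\ref{prop:smoothtwist} from Honda's criterion, Proposition~\ref{thm:criterion}. Since $\beta\ge0$, we have $\ric(\omega)\ge-\omega$ on $M=X_{\reg}$. It therefore suffices to show that every eigenfunction $f\in W^{1,2}(X)$ with $-\Delta f=\lambda f$ is Lipschitz on $\overline X$. By Lemma~\ref{lem:bootstrap}, this reduces to the two integral bounds \eqref{eq:seedtarget}. These are
\[
\int_M (h^2+|\nabla h|^2)\,dV_g<\infty \quad\text{and}\quad \int_M|B|^2\,dV_g<\infty,
\]
where $h=\log(1+|\nabla f|^2)$ and $B$ is the $J$-invariant part of the Hessian of $f$. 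We will obtain both bounds by approximation. We pass to the log resolution $\pi:Y\to X$ and solve regularized twisted equations
\[
\ric(\omega_j)=-\omega_j+\pi^*\beta+(\text{correction}),\qquad \omega_j=\vartheta+\eps_j\theta_Y+\ddc u_j,
\]
with smooth volume forms approximating $e^{-\psi}\pi^*\Omega$. These converge smoothly to $\pi^*\omega$ away from $E$ and fit the approximation framework \eqref{eq:approximation}. The $L^\infty$ bound on $u_j$ comes from Ko\l odziej's estimate, and the $L^r$ bound on the densities comes from Lemma~\ref{lem:density}. The uniform Sobolev, heat kernel and spectral estimates of Theorem~\ref{thm:background} then hold along the family.

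The $J$-invariant Hessian is controlled by the K\"ahler structure. For $f$ real, $B$ is essentially the complex Hessian $\ddc f$, and $\tr_\omega\ddc f=-\tfrac{\lambda}{2} f$. Its $L^2$ norm can be computed by integration by parts without using the Ricci curvature:
\[
\int |\ddc f|^2\,\omega^n=\int(\Delta_\omega f)^2\,\omega^n,
\]
up to a boundary term. This identity holds for any closed K\"ahler form, since the Ricci terms cancel when $\partial\bar\partial$ is commuted in the $(1,1)$ direction. On the approximations, we will apply this identity to the eigenfunctions $f_j$ of $\omega_j$, which have uniformly bounded eigenvalues and unit $L^2$ norm. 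This gives $\int|B_j|^2\le C\lambda^2$ uniformly. Spectral convergence of $(Y,\omega_j)$ to the target, using the uniform heat kernel bounds and the compact resolvent, gives $f_j\to f$ smoothly on compact subsets of $M$. Fatou's lemma then yields $\int_M|B|^2<\infty$. The bound $\int h^2<\infty$ follows from $h\le|\nabla f|^2$ together with the $L^2$ gradient bound and the Sobolev bound.

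The main obstacle is the bound $\int|\nabla h|^2<\infty$. The approximating metrics have no uniform Ricci lower bound, because the correction term is only controlled in an $L\log L$, Ricci-entropy sense. This is exactly what Proposition~\ref{prop:analytic} is designed to handle. For each $j$, we will use the drift inequality of Lemma~\ref{lem:drift}, with the Ricci term replaced by the negative part $\rho_j$ of $\ric(\omega_j)+\omega_j$. We test it against $h_j$ itself, or against a truncation of it. The $|B|$ terms are absorbed by Young's inequality as in the proof of Lemma~\ref{lem:bootstrap}, which leaves a term of the form $\int\rho_j\,e^{h_j}$ or $\int\rho_j\,h_j$. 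We plan to bound $\int\rho_j h_j$ by the entropy pairing $\int\rho_j\log\rho_j$ together with $\int e^{h_j}$; the latter is $1+\int|\nabla f_j|^2$. This requires a uniform bound on the Ricci entropy of $\rho_j$. We expect to get it by choosing the correction in the twisted equation as $\ddc$ of the regularized log density, and using the $\PSH$ property together with the finiteness of all entropy moments \eqref{eq:nashentropy}. With this bound, $\int|\nabla h_j|^2$ is uniformly bounded, and passing to the limit on compact sets of $M$ gives the first bound in \eqref{eq:seedtarget}. Lemma~\ref{lem:bootstrap} then makes every eigenfunction Lipschitz, and Proposition~\ref{thm:criterion} concludes that the completion is a noncollapsed $\RCD(-1,2n)$ space.
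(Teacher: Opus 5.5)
You follow the paper's own architecture from Proposition~\ref{prop:analytic}. That means Honda's criterion, reduction to \eqref{eq:seedtarget} via Lemma~\ref{lem:bootstrap}, and the identity $\int_Y|B_j|^2=\lambda_j^2/2$ on the closed approximants. It also means testing the drift inequality against a function of $h_j$ with the pairing $\rho h\le\rho\log(1+\rho)+e^h$, then spectral convergence and Fatou.

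There are two small slips in that part. First, $\int_Ye^{h_j}\,dV_{g_j}=\vol(Y,g_j)+\lambda_j$. Second, $\int h^2<\infty$ should come from $h^2\le Ce^h$; your bound $h\le|\nabla f|^2$ would need $\nabla f\in L^4$.

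The genuine gap is the one step specific to the smooth-twist case: a uniform bound $\sup_j\EntRic(g_j)<\infty$. You only expect this to hold, and the mechanism you propose does not give it. The moments \eqref{eq:nashentropy} control only the zeroth-order quantity $\log(\omega^n/\theta^n)$ of the target. By contrast, $\rho_j$ is governed by the negative part of $-\ddc$ of the regularized log density, measured against the degenerating metric $\omega_j$. That involves two derivatives and the inverse metric, and no moment bound on the log density controls it.

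Quasi-plurisubharmonicity does not help either. It is true that $\log(|s_i|_{h_i}^2+\eps)$ is uniformly quasi-psh, and this handles the negative discrepancies. But for $a_i>0$ the term $-a_i\ddc\log(|s_i|_{h_i}^2+\eps)$ has an unbounded negative part $a_iP_{i,\eps}$, where $P_{i,\eps}=\eps(|s_i|_{h_i}^2+\eps)^{-2}\,i\langle Ds_i\wedge\overline{Ds_i}\rangle_{h_i}$ concentrates on $E_i$. This is exactly what must be controlled in $L\log L$.

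The paper's argument uses three ingredients your proposal lacks.

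\emph{Smoothness of $F$.} Normality of $X$ and the extension of pluriharmonic functions show that $F=\varphi+\psi$ is smooth on $X$. So the density to regularize is $e^{-\pi^*F}\pi^*\Omega$, whose only singular factor is $\prod_i|s_i|_{h_i}^{2a_i}$, and one solves $\omega_\eps^n=e^{u_\eps-\pi^*F}\prod_i(|s_i|_{h_i}^2+\eps)^{a_i}\Omega_Y$. Regularizing $e^{-\psi}\pi^*\Omega$, as you write, while keeping the factor $e^{u_j}$ implicit in your formula $\ric(\omega_j)=-\omega_j+\cdots$, would converge to a different equation. The correct choice gives $\rho_\eps\le R_\eps=1+C\tr_{\omega_\eps}\theta_Y+\sum_{a_i>0}a_i\tr_{\omega_\eps}P_{i,\eps}$.

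\emph{Inverse trace bound.} Lemma~\ref{lem:inverse} gives $\tr_{\omega_\eps}\theta_Y\le C\prod_i|s_i|_{h_i}^{-2N}$, from the $C^2$ estimate with poles using the barrier \eqref{eq:barrier}. This yields $\log(1+R_\eps)\le C\bigl(1+\sum_k(-\log|s_k|_{h_k}^2)\bigr)$. Then $(\tr_{\omega_\eps}\eta)\,\omega_\eps^n=n\,\eta\wedge\omega_\eps^{n-1}$ and Poincar\'e--Lelong reduce the entropy to $C\bigl(1+|\log\eps|\sum_k\int_{E_k}(\vartheta+\eps\theta_Y)^{n-1}\bigr)$.

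\emph{Codimension of the exceptional centers (the decisive step).} Because $X$ is normal, each $E_k$ lies over a center of codimension $c_k\ge2$. Hence $\int_{E_k}(\vartheta+\eps\theta_Y)^{n-1}=O(\eps^{c_k-1})=O(\eps)$, and the factor $|\log\eps|$ is absorbed. For a non-exceptional divisor this mass has a nonzero limit and the bound fails. So this is a missing idea, not a routine verification.
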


Proposition \ref{prop:smoothtwist} is a special case of Theorem \ref{thm:main}. In Sections~\ref{sec:approximation}--\ref{sec:bounding}, we establish
the estimates needed for the proof of this proposition, which is
given in Section~\ref{subsec:smoothtwistproof}. The positivity of
$\beta$ is not needed for the entropy estimates on the resolution.
If $\beta\ge0$, one can take $B=1$. We note that this intermediate
result does not require projectivity.

\subsection{Barrier functions}

Throughout Sections~\ref{sec:approximation}--\ref{sec:bounding},
the approximations on the resolution are constructed under the
hypotheses of Proposition~\ref{prop:smoothtwist}. In particular,
\begin{equation}\label{tke}
 \ric(\omega)=-\omega+\beta\qquad\text{on } X_{\reg},
\end{equation}
where $\beta$ is a fixed smooth real closed $(1,1)$-form with
smooth local potentials on $X$. The Ricci lower bound for the target
metric is the separate hypothesis $\ric(\omega)\ge-B\omega$.
We write $\omega=\theta+\ddc\varphi$, where
$\varphi\in\PSH(X,\theta)\cap L^\infty(X)$, and keep the notation
$\psi$ for the density potential in \eqref{eq:class}. On $ X_{\reg}$,
we set
\[
 F=\varphi+\psi.
\]
By the twisted equation, we have
\[
 \ddc F=\beta-\theta-\ric(\Omega)\qquad\text{on } X_{\reg}.
\]
We now verify that $F$ extends smoothly to $X$. Locally, the smooth
form $\beta-\theta-\ric(\Omega)$ admits a smooth real potential $v$,
and thus $F-v$ is pluriharmonic on the regular locus of that
neighborhood. A pluriharmonic function on the regular locus of a
normal complex space extends uniquely to a pluriharmonic function
across its singular set \cite[Corollary 2]{Vajaitu}. Such a function
is locally the real part of a holomorphic function, and hence is
smooth in the sense of ambient embeddings. Adding $v$ gives a smooth
local extension of $F$, and these local extensions agree by
uniqueness on the dense regular locus. Consequently, $F\in C^\infty(X)$
and
\begin{equation}\label{o-ma}
 (\theta+\ddc\varphi)^n=e^{\varphi-F}\Omega,
 \qquad \ddc F+\ric(\Omega)=\beta-\theta.
\end{equation}
The identity of measures holds on all of $X$, since both measures
give zero mass to $X\setminus  X_{\reg}$. We emphasize that it is $F$
which is smooth, whereas $\psi=F-\varphi$ need not extend smoothly.

We choose a log resolution $\pi:Y\to X$, which is an isomorphism over
$ X_{\reg}$ and also principalizes the Jacobian ideal. More precisely,
we choose a resolution obtained by blowups which admits an effective
exceptional divisor whose negative is relatively ample. On a compact
K\"ahler space, this gives a K\"ahler resolution together with smooth
metrics on the divisors such that, after taking a small positive
multiple of the exceptional coefficients,
\begin{equation}\label{eq:barrier}
 \vartheta=\pi^*\theta,\qquad
 \chi=\sum_i b_i\log |s_i|_{h_i}^2-C_0\le0,\quad b_i>0,
 \qquad \vartheta+\ddc\chi\ge c\theta_Y\quad\hbox{off }E.
\end{equation}
Here $E=\bigcup_iE_i$ and $\theta_Y$ is a fixed smooth K\"ahler form
on $Y$. This is the barrier construction along the exceptional
divisor used in \cite[proof of Proposition 3.1]{CCHSTT}; in
particular, it is available in the compact normal K\"ahler setting.
We note that the resolution is chosen to have this property, and we
do not claim it for an arbitrary modification. Geometrically, the
curvature of the relatively ample negative exceptional divisor
controls the contracted directions, while $\vartheta$ controls the
remaining directions. Choosing the coefficients small then gives a
positive form on $Y$.

Let $s_i$ be the defining section of $\mathcal O_Y(E_i)$ and let
$h_i$ be a smooth Hermitian metric on it, whose curvature $\Theta_i$
is normalized by
\begin{equation}\label{eq:poincarelelong}
 \ddc\log |s_i|_{h_i}^2=2\pi[E_i]-\Theta_i.
\end{equation}
After rescaling the $h_i$ by positive constants, we may assume that
$|s_i|_{h_i}^2\le e^{-2}$, and we absorb these changes into the smooth
volume form $\Omega_Y$ in \eqref{eq:adapted}. This changes neither the
curvature forms nor the discrepancies, and we keep the notation
$a_i>-1$ for the latter.

\subsection{Regularization of the volume measure}
Let $\phi=\pi^*\varphi$. By the twisted equation and the normality
argument above, $\pi^*F$ is a fixed smooth function on $Y$. In
particular,
\begin{equation}\label{eq:liftedpsi}
 \begin{gathered}
 \pi^*F=\phi+\pi^*\psi\quad\text{on }Y\setminus E,\\
 \ddc(\pi^*F)=\pi^*(\beta-\theta-\ric(\Omega))\ge-C\theta_Y
 \quad\text{on }Y.
 \end{gathered}
\end{equation}
The equation for the target metric becomes
\begin{equation}\label{eq:targetMA}
 (\vartheta+\ddc\phi)^n
 =e^{\phi-\pi^*F}\prod_i|s_i|_{h_i}^{2a_i}\Omega_Y.
\end{equation}
We keep $\pi^*F$ unchanged, and use the same parameter $\eps$ both to
perturb the class and to regularize the divisor factors. We define
\begin{equation}\label{eq:densities}
 \begin{split}
 \mathcal F_\eps
 &=e^{-\pi^*F}\prod_i(|s_i|_{h_i}^2+\eps)^{a_i}\frac{\Omega_Y}{\theta_Y^n},
       \qquad 0<\eps\le\tfrac14,\\
 \mathcal F_0
 &=e^{-\pi^*F}\prod_i|s_i|_{h_i}^{2a_i}\frac{\Omega_Y}{\theta_Y^n}.
 \end{split}
\end{equation}

\begin{lemma} \label{lem:regularizeddensity}
There exist $r>1$ and constants $c,C>0$, independent of $\eps$, such
that
\begin{equation}\label{eq:densityuniform}
 \begin{gathered}
 \|\mathcal F_\eps\|_{L^r(\theta_Y^n)}\le C,
 \qquad c\le\int_Y\mathcal F_\eps\theta_Y^n\le C,\\
 \mathcal F_\eps\longrightarrow\mathcal F_0
                   \quad\text{in }L^r(\theta_Y^n)\text{ as }\eps\downarrow0.
 \end{gathered}
\end{equation}
\end{lemma}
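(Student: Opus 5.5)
The idea is to treat the factors $e^{-\pi^*F}\Omega_Y/\theta_Y^n$ and $\prod_i(|s_i|^2+\eps)^{a_i}$ separately. The first is a fixed smooth positive function on $Y$, since $\pi^*F$ is smooth by \eqref{eq:liftedpsi}. So it is bounded above and below by constants independent of $\eps$, and it suffices to study
\[
 G_\eps=\prod_i(|s_i|_{h_i}^2+\eps)^{a_i},\qquad G_0=\prod_i|s_i|_{h_i}^{2a_i}.
\]
We then split the indices according to the sign of $a_i$. For $a_i\ge0$ we have
\[
 (|s_i|^2+\eps)^{a_i}\le (e^{-2}+\tfrac14)^{a_i}\le 1,
\]
uniformly in $\eps$. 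For $-1<a_i<0$ we have
\[
 (|s_i|^2+\eps)^{a_i}\le|s_i|^{2a_i}.
\]
Hence
\[
 G_\eps\le G_0^-:=\prod_{a_i<0}|s_i|_{h_i}^{2a_i}
\]
pointwise, uniformly in $\eps\in(0,\frac14]$, and the same bound holds for $G_0$.

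\textbf{Upper bounds.} The key step is to choose $r>1$ with $G_0^-\in L^r(\theta_Y^n)$. Since $E$ is a simple normal crossing divisor, near any point of $Y$ we may take local coordinates in which the $s_i$ meeting there are coordinate functions $z_i$. Then $G_0^-$ is comparable to $\prod|z_i|^{2a_i}$, and this lies in $L^r$ locally as long as $r a_i>-1$ for every $i$. We therefore fix
\[
 1<r<\min_{a_i<0}\frac{1}{|a_i|}.
\]
This choice is possible precisely because of the klt condition $a_i>-1$; equivalently, it is the integrability $h\in L^{1+\delta}$ already noted before Lemma~\ref{lem:density}. A finite cover of $Y$ by such charts gives
\[
 \|\mathcal F_\eps\|_{L^r}\le C\|G_0^-\|_{L^r}\le C,
\]
and in particular the upper bound on the mass, since $Y$ has finite volume.

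\textbf{Lower bound on the mass.} Here the split by sign goes the other way. For $a_i<0$ we have $(|s_i|^2+\eps)^{a_i}\ge(e^{-2}+\frac14)^{a_i}\ge c$. For $a_i\ge0$ we have $(|s_i|^2+\eps)^{a_i}\ge |s_i|^{2a_i}$. Therefore $G_\eps\ge c\prod_{a_i\ge0}|s_i|^{2a_i}$, which is positive off $E$ and has positive integral independent of $\eps$.

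\textbf{Convergence.} On $Y\setminus E$ we have $\mathcal F_\eps\to\mathcal F_0$ pointwise, and $E$ has measure zero. Moreover, both $|\mathcal F_\eps-\mathcal F_0|^r$ are dominated by $C(G_0^-)^r$, which is integrable by the choice of $r$. Dominated convergence then gives the $L^r$ convergence.

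The only real point is the choice of $r$ via the local normal crossing computation, which is where klt enters. Everything else is monotonicity in $\eps$ according to the sign of $a_i$.
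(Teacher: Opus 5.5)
Your proposal is correct and follows the same approach as the paper's proof: you bound the fixed smooth factor above and below, split the divisor factors by the sign of $a_i$ to get the $\eps$-independent dominating function $\prod_{a_i<0}|s_i|_{h_i}^{2a_i}\in L^r$ with $ra_i>-1$ (snc coordinates), then use dominated convergence and H\"older's inequality. The only cosmetic difference is in the mass lower bound. You use the global pointwise bound $G_\eps\ge c\prod_{a_i\ge0}|s_i|_{h_i}^{2a_i}$, while the paper integrates over a fixed ball in $Y\setminus E$; these are equivalent.
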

\begin{proof}
We choose $r>1$ close enough to one so that $ra_i>-1$ for every
negative discrepancy $a_i$. The smooth positive factor
$e^{-\pi^*F}\Omega_Y/\theta_Y^n$ is bounded above and below on $Y$.
For $a_i\ge0$, the factors $(|s_i|_{h_i}^2+\eps)^{a_i}$ are uniformly
bounded, while for $a_i<0$ we have
$(|s_i|_{h_i}^2+\eps)^{ra_i}\le |s_i|_{h_i}^{2ra_i}$ away from $E_i$.
In simple normal crossing coordinates, the resulting product
$\prod_{a_i<0}|s_i|_{h_i}^{2ra_i}$ is integrable. This proves the
uniform $L^r$ bound and, by the dominated convergence theorem, the
stated $L^r$ convergence. The upper bound for the mass follows from
H\"older's inequality. On a fixed ball compactly contained in
$Y\setminus E$, all factors have a common positive lower bound for
$0<\eps\le1/4$, and integrating over this ball gives the lower bound
for the mass.
\end{proof}

\subsection{Regularization by complex Monge--Amp\`ere equations}
For $0<\eps\le1/4$, we let
\begin{equation}\label{eq:alpha}
 \alpha_\eps=\vartheta+\eps\theta_Y.
\end{equation}
This is the smooth K\"ahler reference form in the approximating
class, and should not be confused with the solution metric. For each
$0<\eps\le1/4$, we solve
\begin{equation}\label{eq:approxMA}
 \begin{split}
 \omega_\eps&=\alpha_\eps+\ddc u_\eps>0,\\
 \omega_\eps^n
 &=e^{u_\eps-\pi^*F}
       \prod_i(|s_i|_{h_i}^2+\eps)^{a_i}\Omega_Y
   =e^{u_\eps}\mathcal F_\eps\theta_Y^n.
 \end{split}
\end{equation}
Because of the positive exponential factor, the existence and
uniqueness of a smooth solution follow from the classical theorem of
Aubin and Yau for Monge--Amp\`ere equations with positive exponential
\cite{Aubin,Yau}. In particular, we do not assume the solvability of
any Fano K\"ahler--Einstein equation on $Y$. The additive constant of
$u_\eps$ is determined by the equation, rather than by a separate
normalization. Therefore the factor $e^{u_\eps}$ suffices once the
fixed target density has been incorporated into
$\mathcal F_\eps$ defined in \eqref{eq:densities}.
We note that, in general, the factor $e^{-\pi^*F}$ cannot be dropped
while keeping $\Omega_Y$ fixed: doing so would replace the target
equation \eqref{eq:targetMA} by
$(\vartheta+\ddc\phi)^n=e^\phi\prod_i|s_i|_{h_i}^{2a_i}\Omega_Y$,
which is a different equation unless the Ricci potential is constant,
up to the corresponding normalization.

\begin{lemma} \label{lem:C0}
There exist $r>1$ and $C$ independent of $0<\eps\le1/4$ such that
\begin{equation}\label{eq:uniformbounds}
 \|u_\eps\|_\infty\le C,\qquad
 \left\|\frac{\omega_\eps^n}{\theta_Y^n}\right\|_{L^r(\theta_Y^n)}\le C.
\end{equation}
\end{lemma}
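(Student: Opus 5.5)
We prove the $L^\infty$ bound for $u_\eps$ in two halves, and then read off the density bound from the equation.

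\emph{Upper bound.} Let $x_0$ be a maximum point of $u_\eps$. There $\ddc u_\eps\le0$, so $\omega_\eps^n\le\alpha_\eps^n$ at $x_0$. Since $\alpha_\eps\le\vartheta+\theta_Y\le C\theta_Y$, the equation \eqref{eq:approxMA} gives
\[
 e^{u_\eps(x_0)}\mathcal F_\eps(x_0)\le C.
\]
This is not enough by itself, because $\mathcal F_\eps$ may be small near divisors with $a_i>0$. We therefore argue by integration instead. By Jensen's inequality and \eqref{eq:approxMA},
\[
 \int_Y\omega_\eps^n=\int_Y\alpha_\eps^n\le C .
\]
Hence $\int_Y e^{u_\eps}\mathcal F_\eps\,\theta_Y^n\le C$. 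We combine this with the lower mass bound and the uniform $L^r$ bound of Lemma~\ref{lem:regularizeddensity}. Fix a ball $U\Subset Y\setminus E$ on which $\mathcal F_\eps\ge c$ uniformly. Then $\int_U e^{u_\eps}\le C$, so $\sup_U u_\eps$ is controlled from above in an averaged sense. Since $u_\eps$ is $C\theta_Y$-psh, the standard compactness of normalized quasi-psh functions (uniform $L^1$ bounds for $u-\sup u$) gives $\sup_Y u_\eps\le \frac{1}{|U|}\int_U u_\eps+C\le C$. The last step uses Jensen on $U$ together with $\int_U e^{u_\eps}\le C$.

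\emph{Lower bound.} Set $v=u_\eps-\sup_Y u_\eps$. Then $\alpha_\eps+\ddc v$ has density
\[
 e^{u_\eps}\mathcal F_\eps\le C\mathcal F_\eps ,
\]
which is bounded in $L^r$ with $r>1$ uniformly, by Lemma~\ref{lem:regularizeddensity}. The class $[\alpha_\eps]$ degenerates to the semipositive big class $[\vartheta]$ as $\eps\to0$. The uniform Ko\l odziej estimate for families of semipositive big classes, in the form of \cite{EGZ} or the GPSS-type estimates on $Y$ with $\alpha_\eps\le C\theta_Y$ and $\int\alpha_\eps^n\ge c$, then yields $\|v\|_\infty\le C$.

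It remains to bound $\sup u_\eps$ from below. Here the mass lower bound is used: $\int e^{u_\eps}\mathcal F_\eps=\int\alpha_\eps^n\ge c$ and $\int\mathcal F_\eps\le C$ force $\sup u_\eps\ge -C$. Together these give $\|u_\eps\|_\infty\le C$.

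\emph{Density bound and main obstacle.} The $L^r$ bound in \eqref{eq:uniformbounds} follows at once, since $\omega_\eps^n/\theta_Y^n=e^{u_\eps}\mathcal F_\eps\le C\mathcal F_\eps$. The main obstacle is the uniformity of the Ko\l odziej $L^\infty$ estimate as the reference class degenerates from $\alpha_\eps$ to $\vartheta$. To handle this, we first apply the estimate with respect to $\vartheta$ itself, which is a fixed semipositive big form, by viewing $v$ as $(\vartheta+\eps\theta_Y)$-psh and comparing it with a solution relative to $\vartheta$. Alternatively, we can use the Demailly--Pali/EGZ capacity argument, whose constants depend only on upper bounds for the reference form and a lower bound for its volume.
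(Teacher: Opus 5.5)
Your proposal is correct and takes essentially the paper's route. The paper simply invokes the uniform $L^\infty$ estimates of Eyssidieux--Guedj--Zeriahi and Demailly--Pali for the family $\vartheta\le\alpha_\eps\le C\theta_Y$, with $\int_Y\alpha_\eps^n\ge\int_Y\vartheta^n>0$ and the density control of Lemma~\ref{lem:regularizeddensity}, and then reads the $L^r$ bound off \eqref{eq:approxMA}; you additionally make explicit how the additive constant of $u_\eps$ is pinned down (upper bound via Jensen on a ball in $Y\setminus E$ plus $L^1$-compactness of normalized quasi-psh functions, lower bound via the mass), and note only that the capacity argument you cite also uses the fixed lower bound $\alpha_\eps\ge\vartheta$, while your first alternative (comparing with a solution relative to $\vartheta$) is unnecessary and not clearly workable, since $v$ is not $\vartheta$-psh.
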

This follows from the uniform $L^\infty$ estimate for degenerate
complex Monge--Amp\`ere equations due to Eyssidieux--Guedj--Zeriahi
\cite[Theorems 2.1 and 4.1]{EGZ} and Demailly--Pali
\cite[Theorem 2.2(A),(C) and Lemma 2.14]{DP},
whose hypotheses are satisfied uniformly: $\vartheta\le\alpha_\eps\le C\theta_Y$,
$\int_Y\alpha_\eps^n\ge\int_Y\vartheta^n>0$, and
Lemma~\ref{lem:regularizeddensity} controls the density and its mass.
The second bound then follows from \eqref{eq:approxMA}.
We also refer to
\cite[Lemma 7.1 and the proof of Theorem 3.3 in Section 12]{GPSS2}
for these estimates in the setting of resolution approximations.

\subsection{Estimates for approximating metrics}
We define
\begin{equation}\label{eq:Psi}
 \Psi_\eps=\sum_{a_i>0}a_i\log(|s_i|_{h_i}^2+\eps),\qquad
 \sup_Y\Psi_\eps\le C,\qquad \ddc\Psi_\eps\ge-C\theta_Y.
\end{equation}
Taking the Ricci curvature of \eqref{eq:approxMA}, we obtain
\begin{align}
 \ric(\omega_\eps)
 &=-\omega_\eps+\alpha_\eps+\ddc(\pi^*F)
       +\ric(\Omega_Y)-\sum_i a_i\ddc\log(|s_i|_{h_i}^2+\eps)\notag\\
 &\ge-\omega_\eps-\ddc\Psi_\eps-C\theta_Y.
 \label{eq:ricciPsi}
\end{align}
Here we have used the uniform bound
$\ddc\log(|s_i|_{h_i}^2+\eps)\ge-C\theta_Y$. Thus the negative
discrepancies contribute terms bounded from below, while the positive
discrepancies are kept in $\Psi_\eps$.

\begin{lemma} \label{lem:inverse}
There exist $C,N>0$, independent of $\eps$, such that
\begin{equation}\label{eq:inverse}
 U=\tr_{\omega_\eps}\theta_Y\le C\prod_i|s_i|_{h_i}^{-2N}
 \quad\hbox{on }Y\setminus E.
\end{equation}
\end{lemma}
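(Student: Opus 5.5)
The plan is to prove \eqref{eq:inverse} with a Chern--Lu type inequality for the identity map $(Y,\omega_\eps)\to(Y,\theta_Y)$, combined with the maximum principle. The barrier $\chi$ from \eqref{eq:barrier} and the potential $\Psi_\eps$ from \eqref{eq:Psi} will absorb the lack of a uniform Ricci lower bound and the degeneracy along $E$.

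\textbf{Step 1: Chern--Lu inequality.} Since $\theta_Y$ is a fixed smooth K\"ahler form, its bisectional curvature is bounded above by a constant $C$. The Chern--Lu inequality then gives, on $Y$,
\[
 \Delta_{\omega_\eps}\log U\ge\frac{\langle\ric(\omega_\eps),\theta_Y\rangle_{\omega_\eps}}{U}-CU.
\]
By \eqref{eq:ricciPsi}, $\ric(\omega_\eps)\ge-\omega_\eps-\ddc\Psi_\eps-C\theta_Y$. The term $-\omega_\eps$ contributes $-1$ and $-C\theta_Y$ contributes at least $-CU$. For the term $\ddc\Psi_\eps$, note that $\ddc\Psi_\eps\le C'\theta_Y$ fails, so an upper bound is not available. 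Instead, we write $\ddc\Psi_\eps=\omega_\eps$-trace terms, using $\tr_{\omega_\eps}\ddc\Psi_\eps=\Delta_{\omega_\eps}\Psi_\eps$. We use the standard substitute $\langle\ddc\Psi_\eps,\theta_Y\rangle/U\le\Delta_{\omega_\eps}\Psi_\eps+CU$. This holds because $\ddc\Psi_\eps+C\theta_Y\ge0$, and for a nonnegative form $\eta$ one has $\langle\eta,\theta_Y\rangle_{\omega_\eps}\le\tr_{\omega_\eps}\eta\cdot\tr_{\omega_\eps}\theta_Y$. The result is
\[
 \Delta_{\omega_\eps}(\log U+\Psi_\eps)\ge-C-CU.
\]

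\textbf{Step 2: barrier term.} Next we bring in $\chi$. Since $\alpha_\eps+\ddc\chi\ge\vartheta+\ddc\chi\ge c\theta_Y$ off $E$, we have
\[
 \Delta_{\omega_\eps}(u_\eps-\chi)=n-\tr_{\omega_\eps}(\alpha_\eps+\ddc\chi)\le n-cU\quad\text{off }E.
\]
Take
\[
 G=\log U+\Psi_\eps-A(u_\eps-\chi)
\]
with $A$ large. Off $E$ this gives $\Delta_{\omega_\eps}G\ge(Ac-C)U-C-An$. Since $\chi\to-\infty$ along $E$ while $\omega_\eps$ is smooth on $Y$ and $\Psi_\eps$ is bounded, $G\to-\infty$ at $E$. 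Hence $G$ attains its maximum in $Y\setminus E$, and there $U\le C$. By Lemma~\ref{lem:C0}, $u_\eps$ is bounded, and $\Psi_\eps\le C$. The key issue is a lower bound for $\Psi_\eps$: it is only $\ge\sum_{a_i>0}a_i\log|s_i|^2$, which is fine because it is bounded above at the max point and bounded below by logs at arbitrary points. We conclude $G\le C$ everywhere. Consequently
\[
 \log U\le C-\Psi_\eps+A\chi\cdot(-1)\cdots,
\]
more precisely $\log U\le C-\Psi_\eps-A\chi$. Here $-A\chi\le A\sum b_i\log|s_i|^{-2}+C$ and $-\Psi_\eps\le\sum_{a_i>0}a_i\log|s_i|^{-2}$ (using $|s_i|^2+\eps\ge|s_i|^2$). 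Exponentiating gives \eqref{eq:inverse} with $N=\max_i(Ab_i+a_i^+)$, which is uniform in $\eps$.

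\textbf{Main obstacle.} The delicate point is Step 1: $\ric(\omega_\eps)$ has no uniform lower bound, because of $\ddc\Psi_\eps$ with $a_i>0$. So it must be handled by the trace inequality above and absorbed into $\Delta\Psi_\eps$. One has to check that the constant $C$ in $\ddc\Psi_\eps\ge-C\theta_Y$ is uniform in $\eps$, which holds since $\ddc\log(|s|^2+\eps)\ge-\frac{|s|^2}{|s|^2+\eps}\Theta$. The remaining checks are routine: the maximum principle application near $E$, and the fact that $U$ is smooth on all of $Y$ for fixed $\eps$.
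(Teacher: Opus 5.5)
Your proof is correct, but it takes a different route from the paper. The paper runs no maximum principle for $U$. It quotes the second-order estimate of \cite[Lemma 7.1]{GPSS2}, in its positive-exponential form, for the \emph{other} trace: $\tr_{\theta_Y}\omega_\eps\le C\prod_i|s_i|_{h_i}^{-2N_0}$. It then converts this bound using two facts:
\begin{itemize}
\item the determinant lower bound $\omega_\eps^n/\theta_Y^n\ge c\prod_{a_i>0}|s_i|_{h_i}^{2a_i}$, which comes from \eqref{eq:approxMA} and Lemma~\ref{lem:C0};
\item the inequality $\tr_{\omega_\eps}\theta_Y\le n(\tr_{\theta_Y}\omega_\eps)^{n-1}/(\omega_\eps^n/\theta_Y^n)$.
\end{itemize}

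You instead bound $U$ directly with the Chern--Lu inequality. Your only curvature input is the one-sided structure \eqref{eq:ricciPsi}. The positive discrepancies enter through $-\ddc\Psi_\eps$, and you absorb them into $\Delta_{\omega_\eps}\Psi_\eps$ via $\langle\eta,\theta_Y\rangle_{\omega_\eps}\le(\tr_{\omega_\eps}\eta)\,U$ with $\eta=\ddc\Psi_\eps+C\theta_Y\ge0$. The negative discrepancies only help. The barrier $\chi$ and the uniform bound on $u_\eps$ then close the argument, with the explicit exponent $N=\max_i(Ab_i+a_i^+)$.

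What your route buys is self-containedness. It never needs the cited estimate, whose hypotheses involve second derivatives of $\log\mathcal F_\eps$; near components with $a_i<0$ these are only polynomially controlled, not uniformly bounded below. It also recovers the other trace bound. Combine your estimate with the density upper bound $\omega_\eps^n/\theta_Y^n\le C\prod_{a_i<0}|s_i|_{h_i}^{2a_i}$ and the inequality $\tr_{\theta_Y}\omega_\eps\le n(\tr_{\omega_\eps}\theta_Y)^{n-1}\,\omega_\eps^n/\theta_Y^n$. This gives a polynomial-pole bound for $\tr_{\theta_Y}\omega_\eps$.

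The paper's route is more economical given the citation. The two-sided trace control away from $E$ is also what feeds the local smooth convergence in Corollary~\ref{cor:approximation}, so one reference serves both purposes.

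When you write it up, delete two garbled passages: the line ``$\log U\le C-\Psi_\eps+A\chi\cdot(-1)\cdots$'' and the phrase about writing $\ddc\Psi_\eps$ as ``$\omega_\eps$-trace terms''. The inequalities you actually use are the ones displayed immediately afterwards, and they are correct.
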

By the trace estimate of \cite[Lemma 7.1]{GPSS2}, we have
$\tr_{\theta_Y}\omega_\eps\le C\prod_i|s_i|_{h_i}^{-2N_0}$.
Here we use the modification for equations with positive exponential
described in the proof of Theorem 3.3 in Section 12 of that paper.
Its hypotheses follow from \eqref{eq:barrier},
Lemma~\ref{lem:regularizeddensity} and the fixed smooth factor
$e^{-\pi^*F}\Omega_Y/\theta_Y^n$;
in particular, the derivatives of $\log\mathcal F_\eps$ satisfy
uniform polynomial bounds away from $E$. To derive
\eqref{eq:inverse} from the cited estimate, we use
\[
 \frac{\omega_\eps^n}{\theta_Y^n}\ge c\prod_{a_i>0}|s_i|_{h_i}^{2a_i},
 \qquad
 \tr_{\omega_\eps}\theta_Y
 \le\frac{n(\tr_{\theta_Y}\omega_\eps)^{n-1}}
              {\omega_\eps^n/\theta_Y^n},
\]
and enlarge $N$. The lower bound for the determinant follows from
\eqref{eq:approxMA} and Lemma~\ref{lem:C0}.

\begin{corollary} \label{cor:approximation}
As $\eps\downarrow0$, the metrics $\omega_\eps$ converge locally
smoothly to $\pi^*\omega$ away from $E$. Consequently, every sequence
$\eps_j\downarrow0$ gives an approximation of $\omega$.
\end{corollary}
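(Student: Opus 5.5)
The convergence claim reduces to uniform local higher-order estimates on compact subsets of $Y\setminus E$ together with uniqueness of the limit potential. The approximation property then follows by checking \eqref{eq:approximation} against Lemma~\ref{lem:C0}.

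First, fix a compact set $K\Subset Y\setminus E$. By Lemma~\ref{lem:inverse}, $\tr_{\omega_\eps}\theta_Y\le C_K$ on $K$ uniformly in $\eps$, so $\omega_\eps\ge c_K\theta_Y$. By \eqref{eq:approxMA} and Lemma~\ref{lem:C0}, $\omega_\eps^n/\theta_Y^n=e^{u_\eps}\mathcal F_\eps$ is bounded above on $K$, since the factors $(|s_i|^2+\eps)^{a_i}$ are uniformly bounded above and below away from $E$. The determinant bound and the lower bound then give $\tr_{\theta_Y}\omega_\eps\le C_K$, so $\omega_\eps$ is uniformly equivalent to $\theta_Y$ on $K$.

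With $\omega_\eps$ uniformly equivalent to $\theta_Y$ and the right-hand side $u_\eps-\pi^*F+\sum a_i\log(|s_i|^2+\eps)+\log(\Omega_Y/\theta_Y^n)$ having uniform $C^k$ bounds on $K$ in the $y$-dependence (for the $u_\eps$ term only $L^\infty$ is available a priori), the equation $\log\det(\alpha_\eps+\ddc u_\eps)=u_\eps+G_\eps$ is uniformly elliptic and concave on $K$. The local Evans--Krylov estimate (or Calabi's $C^3$ estimate in local form) gives $C^{2,\gamma}$ bounds for $u_\eps$ on a slightly smaller compact set. Schauder bootstrapping then yields uniform $C^k_{loc}(Y\setminus E)$ bounds for every $k$. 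By Arzel\`a--Ascoli and a diagonal argument, every sequence $\eps_j\downarrow0$ has a subsequence along which $u_{\eps_j}\to u_0$ in $C^\infty_{loc}(Y\setminus E)$, with $\|u_0\|_\infty\le C$.

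Next I identify the limit. On $Y\setminus E$, $u_0$ satisfies $(\vartheta+\ddc u_0)^n=e^{u_0-\pi^*F}\prod|s_i|^{2a_i}\Omega_Y$. It is bounded and $\vartheta$-psh on $Y\setminus E$, so it extends as a bounded $\vartheta$-psh function on $Y$. Bounded functions put no Monge--Amp\`ere mass on the pluripolar set $E$, so the equation holds on $Y$ as an identity of measures, and it coincides with \eqref{eq:targetMA}. A bounded solution of this equation with positive exponential factor is unique: by the comparison principle for bounded $\vartheta$-psh functions (EGZ), on $\{u_0>\phi\}$ the right-hand side for $u_0$ strictly dominates, which forces that set to be negligible, and symmetrically. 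Hence $u_0=\phi$. Since the limit is independent of the subsequence, the whole family converges, and $\omega_\eps\to\pi^*\omega$ in $C^\infty_{loc}(Y\setminus E)$.

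Finally, for any $\eps_j\downarrow0$, the metrics $\omega_{\eps_j}=\vartheta+\eps_j\theta_Y+\ddc u_{\eps_j}$ have uniformly bounded potentials and uniformly $L^r$-bounded densities by Lemma~\ref{lem:C0}. Together with the local smooth convergence just proved, this is exactly \eqref{eq:approximation}.

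The main obstacle is the local higher-order estimate. The available information is only the $C^0$ bound together with the $C^2$ bound coming from Lemma~\ref{lem:inverse}. One therefore needs an interior $C^{2,\gamma}$ estimate for the complex Monge--Amp\`ere equation that does not use global data, and the uniformity of all lower-order terms of $G_\eps$ on $K$ has to be checked explicitly.
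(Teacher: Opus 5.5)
Your proposal is correct and follows the same route as the paper. The paper likewise obtains local smooth convergence on $Y\setminus E$ from the trace bound of Lemma~\ref{lem:inverse} together with local higher-order estimates (citing \cite[Proposition 7.1]{GPSS2}), identifies the bounded limit with $\phi$ by uniqueness for the Monge--Amp\`ere equation with positive exponential \cite[Theorem 4.1]{EGZ}, and reads off the remaining conditions of \eqref{eq:approximation} from \eqref{eq:uniformbounds}; you have written out the steps the paper delegates to these references. On the point you flag as the main obstacle: the Laplacian bound on $K$ already makes $u_\eps$ uniformly $C^{1,\gamma}$ there, so the right-hand side is H\"older and the local Evans--Krylov step goes through.
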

This follows from the convergence argument of \cite[Proposition 7.1 and the
proof of Theorem 3.3 in Section 12]{GPSS2}, applied to
\eqref{eq:approxMA}. The bounded limiting solution coincides with
$\phi$ by the uniqueness of solutions to the equation with positive
exponential \cite[Theorem 4.1]{EGZ}, and the required uniform bounds
are given by \eqref{eq:uniformbounds}.

\section{Ricci entropy}\label{sec:riccientropy}

\begin{definition}
For a smooth metric $g$ on $ X_{\reg}$, we define at each point
$x\in  X_{\reg}$
\[
 \rho(x)=\max\{0,-\lambda_{\min}(\ric(g)(x))\}.
\]
Its Ricci entropy is defined by
\[
 \EntRic(g)=\int_{ X_{\reg}}\rho_g\log(1+\rho_g)\,dV_g.
\]

\end{definition}

This is the pointwise negative Ricci function considered by
Carron--Mondello--Tewodrose \cite{CMT}; in particular,
$\ric(g)\ge-\rho_g g$. The same definition applies to a smooth compact
approximating manifold $Y$, with the integral taken over $Y$. If
$R\ge0$ satisfies $\ric(g)\ge-Rg$, then $\rho_g\le R$ and
$\EntRic(g)\le\int R\log(1+R)\,dV_g$.

\begin{proposition} \label{prop:analytic}
Let $\omega\in\cK_{\theta,\Omega}(p,K)$ be smooth and K\"ahler on
$ X_{\reg}$, with $\ric(\omega)\ge-\omega$ there, and let
$\{\omega_\eps\}$ be an approximation in the sense of
\eqref{eq:approximation}.
For $0<\eps\le1/4$, let $g_\eps$ be the Riemannian metric of
$\omega_\eps$. Suppose that
\begin{equation}\label{eq:entropyhyp}
 \rho_\eps=\rho_{g_\eps},\qquad
 \sup_{0<\eps\le1/4}\EntRic(g_\eps)
 =\sup_{0<\eps\le1/4}\int_Y\rho_\eps\log(1+\rho_\eps)\,dV_{g_\eps}<\infty.
\end{equation}
Then every eigenfunction of the Laplacian of the target metric is
Lipschitz, and the metric completion of the target, equipped with its
Riemannian measure, is a noncollapsed $\RCD(-1,2n)$ space.
\end{proposition}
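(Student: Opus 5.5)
By Proposition~\ref{thm:criterion}, it suffices to show that every eigenfunction of the target Laplacian is Lipschitz. The Ricci lower bound on $M= X_{\reg}$ is a hypothesis, and the remaining ingredients of Honda's criterion come from Theorem~\ref{thm:background}. To get the Lipschitz property I would verify the seed hypotheses \eqref{eq:seedtarget} of Lemma~\ref{lem:bootstrap} for a fixed eigenfunction $f$ with $-\Delta f=\lambda f$; the Moser iteration there then gives the conclusion. The hypotheses cannot be checked directly on the target, since no global Hessian integrability is known there. So I would approximate. By the compact resolvent and the uniform Sobolev and heat kernel bounds of Theorem~\ref{thm:background}, together with the local smooth convergence $\omega_\eps\to\pi^*\omega$ off $E$, the eigenvalues and eigenfunctions $f_\eps$ of $g_\eps$ converge spectrally to those of the target. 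Concretely, $f_\eps\to f$ in $L^2$, locally smoothly on $Y\setminus E$, with $\lambda_\eps\to\lambda$ and $\|f_\eps\|_\infty$ uniformly bounded by Moser iteration. It then suffices to prove $\eps$-uniform bounds for
\[
 \int_Y |B_\eps|^2\,dV_{g_\eps},\qquad \int_Y\bigl(h_\eps^2+|\nabla h_\eps|^2\bigr)dV_{g_\eps},\qquad h_\eps=\log(1+|\nabla f_\eps|^2),
\]
since Fatou's lemma on compact subsets of $Y\setminus E$ transfers them to $f$.

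\emph{Step 1: the $J$-invariant Hessian.} This is where the K\"ahler structure enters. The $J$-invariant part $B_\eps$ of $\nabla^2 f_\eps$ is the real form of $\ddc f_\eps$. On a compact K\"ahler manifold, integration by parts gives $\int|\ddc f|^2=\int(\Delta_\omega f)^2$ with no curvature term, because $\int \ddc f\wedge\ddc f\wedge\omega^{n-2}=0$. Hence
\[
 \int_Y|B_\eps|^2\,dV_{g_\eps}\le C\lambda_\eps^2\int_Y f_\eps^2\,dV_{g_\eps}\le C,
\]
uniformly in $\eps$.

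\emph{Step 2: the logarithmic gradient.} I would apply Lemma~\ref{lem:drift} on $(Y,g_\eps)$ with the pointwise function $\rho_\eps$ in place of the constant $\rho$. The proof of Lemma~\ref{lem:drift} is pointwise, so it gives
\[
 \Delta h_\eps\ge -C(1+\rho_\eps)-Ce^{-h_\eps/2}|B_\eps||\nabla h_\eps|-Ce^{-h_\eps}|B_\eps|^2 .
\]
Integrating against $h_\eps$ (or a truncation of $h_\eps$) on the closed manifold $Y$ and absorbing the mixed term by Young's inequality, the only dangerous term is $\int \rho_\eps h_\eps$. For it I would use the Young inequality for the conjugate pair $t\log t$ and $e^s$:
\[
 \rho h\le \delta^{-1}\rho\log(1+\rho)+C\,e^{\delta h}=\delta^{-1}\rho\log(1+\rho)+C(1+|\nabla f|^2)^{\delta}.
\]
The first term is controlled by the entropy hypothesis \eqref{eq:entropyhyp}, and the second by $\int|\nabla f_\eps|^2\le\lambda_\eps\|f_\eps\|_2^2$. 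This yields $\int|\nabla h_\eps|^2\le C$. Since $h_\eps^2\le C(1+|\nabla f_\eps|^2)$ and $\int|\nabla f_\eps|^2\le C$, we also get $\int h_\eps^2\le C$ uniformly. This is precisely why $L\log L$ suffices in place of $L^2$.

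\emph{Main obstacle.} The delicate point is Step~2 together with its passage to the limit. The test function $h_\eps$ grows like $\log$ of the gradient, and the estimate must avoid any bound on the full Hessian. In particular $\int h_\eps\,\Delta h_\eps$ must not generate $|H|^2$, which I would handle by working directly with the drift inequality rather than with Bochner's identity. I also need the bounds to be independent of how $\omega_\eps$ degenerates near $E$. If a direct global integration proves awkward, I would first try the capacity route mentioned in the introduction: localize with the cutoffs \eqref{eq:cutoffs} pulled back to $Y$, use the fact that $E$ has zero capacity uniformly (from $\gamma$ and the uniform Sobolev inequality), and then let $\eps\to0$ before removing the cutoffs. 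Once \eqref{eq:seedtarget} is established for $f$, Lemma~\ref{lem:bootstrap} gives Lipschitz eigenfunctions, and Proposition~\ref{thm:criterion} gives the noncollapsed $\RCD(-1,2n)$ structure.
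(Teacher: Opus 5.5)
Your proposal is correct and follows essentially the same route as the paper: spectral convergence of approximating eigenbases, the curvature-free identity $\int_Y|B|^2\,dV_g=\lambda^2/2$, the logarithmic energy bound from integrating the drift inequality against $h$ (equivalently, integrating $\Delta(1+h)^2$) with the entropy duality $\rho h\le\rho\log(1+\rho)+e^h$, then Fatou's lemma, Lemma~\ref{lem:bootstrap} and Proposition~\ref{thm:criterion}. Two small remarks: your $\delta$-weighted Young inequality is valid but unnecessary, since $\int_Y e^h\,dV_g=\vol(Y,g)+\lambda$ already controls the $\delta=1$ case; and for eigenvalues with multiplicity you should, as the paper does, pass to the limit of a full eigenbasis and use that the recovered eigenfunctions span each finite-dimensional eigenspace, rather than approximating a prescribed $f$ directly.
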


The condition \eqref{eq:entropyhyp} can be formulated as a norm
bound. We write $|\ric(g)_-|:=\rho_g$ for the magnitude of the most
negative Ricci eigenvalue, and consider the Luxemburg norm
\[
 \|v\|_{L\log L(Y,dV_g)}
 :=\inf\left\{a>0:\int_Y\Phi\left(\frac{|v|}{a}\right)dV_g\le1\right\},
 \qquad \Phi(t)=t\log(1+t).
\]
Since $\Phi(2t)\le4\Phi(t)$, the entropy bound and the norm bound
are equivalent in the following uniform sense:
\[
 \eqref{eq:entropyhyp}
 \quad\Longleftrightarrow\quad
 \sup_{0<\eps\le1/4}
 \bigl\||\ric(g_\eps)_-|\bigr\|_{L\log L(Y,dV_{g_\eps})}<\infty.
\]
By comparison, the curvature assumption in
\cite[(3.3) and Proposition 3.1]{GSCalabi} is
\[
 \sup_{0<\eps\le1/4}
 \bigl\||\ric(g_\eps)_-|\bigr\|_{L^2(Y,dV_{g_\eps})}<\infty.
\]

Proposition~\ref{prop:analytic} provides a natural
improvement of the curvature hypothesis in the approach of Guo--Song
to RCD structures. We note that the density and convergence
assumptions in \eqref{eq:approximation} remain in force, while the
uniform $W^{2,2}$ and $W^{1,4}$ estimates of
\cite[Theorem 1.1]{GSCalabi} are not needed here.

In the smooth-twist case, Corollary~\ref{cor:approximation} already
shows that the metrics constructed there satisfy
\eqref{eq:approximation}. The proposition itself only uses the
stated approximation properties and the analytic background of
Section~\ref{sec:prelim}. We emphasize that the approximating metrics
are not required to have a uniform pointwise Ricci lower bound; the
curvature bound in the conclusion is the bound $-1$ of the target
metric.

\subsection{An integral Hessian identity}
\begin{lemma}\label{lem:hessian}
On a closed K\"ahler manifold, the component $B$ in
\eqref{eq:hesssplit} satisfies
\begin{equation}\label{eq:hessianidentity}
 \int_Y|B|^2\,dV_g=\frac12\int_Y(\Delta f)^2\,dV_g.
\end{equation}
In particular, for a normalized eigenfunction $-\Delta f=\lambda f$,
we have $\int_Y|B|^2\,dV_g=\lambda^2/2$.
\end{lemma}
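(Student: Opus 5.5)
The plan is to pass to complex notation, where the $J$-invariant part $B$ of the real Hessian is exactly the complex Hessian $f_{i\bar j}=\partial_i\partial_{\bar j}f$. The point is that, unlike the full Bochner formula, the integration by parts for $\int|f_{i\bar j}|^2$ on a Kähler manifold produces no curvature term. The argument therefore has two parts: a pointwise algebraic comparison of norms, and a global integration by parts.

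\emph{Pointwise algebra.} At a point, choose holomorphic coordinates with $g_{i\bar j}=\delta_{ij}$, so that $\partial/\partial x_k,\partial/\partial y_k$ form a real orthonormal frame. The real Hessian splits as $H=B+B'$. The $J$-invariant part $B$ is determined by the Hermitian matrix $(f_{i\bar j})$, and the $J$-anti-invariant part $B'$ is determined by $(f_{ij})$. I will verify the identities
\[
 |B|^2=8\sum_{i,j}|f_{i\bar j}|^2,\qquad
 \Delta f=4\sum_i f_{i\bar i}=2\Delta_\omega f .
\]
Both are linear and quadratic identities that are unitarily invariant, so it suffices to check them on diagonal model cases. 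As a calibration of the constants, take $f=|z_1|^2=x_1^2+y_1^2$. Then $H=2\,\mathrm{Id}$ on a real $2$-plane, so $|B|^2=8$; also $f_{1\bar1}=1$ and $\Delta f=4$. The constants match the claimed identity pointwise in this model, since $(\Delta f)^2/2=8$.

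\emph{Integration by parts.} On the closed manifold $Y$, I will show
\[
 \int_Y\sum_{i,j}|f_{i\bar j}|^2\,dV_g=\int_Y(\Delta_\omega f)^2\,dV_g .
\]
Writing covariant derivatives with respect to the Chern (equivalently Levi-Civita) connection, integrate by parts in the $\bar i$ direction:
\[
 \int f_{i\bar j}f_{\bar i j}=-\int f_{i\bar j\bar i}\,f_{j}.
\]
Because $\omega$ is Kähler, the torsion vanishes and $\nabla_{\bar k}f_{i\bar j}$ is symmetric in $\bar j,\bar k$. Equivalently, $f_{i\bar j\bar i}=f_{i\bar i\bar j}=(\Delta_\omega f)_{\bar j}$, with no curvature term, since only antiholomorphic derivatives are commuted on a $(1,0)$-index. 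A second integration by parts then gives $-\int(\Delta_\omega f)_{\bar j}f_j=\int(\Delta_\omega f)^2$.

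\emph{Combining.} Putting the two parts together,
\[
 \int|B|^2=8\int(\Delta_\omega f)^2=2\int(\Delta f)^2\cdot\tfrac14=\tfrac12\int(\Delta f)^2 .
\]
For a normalized eigenfunction with $-\Delta f=\lambda f$ and $\int f^2\,dV_g=1$, this yields $\int|B|^2=\lambda^2/2$.

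\emph{Main obstacle.} The only delicate point is bookkeeping of normalizations: the real versus complex norms, the factor $2$ in $\Delta=2\Delta_\omega$, and the convention for $|B|^2$ as a real $(0,2)$-tensor norm. The model computation above fixes these constants unambiguously. The curvature-free commutation is the standard Kähler fact $\nabla_{\bar k}\partial_i\partial_{\bar j}f=\nabla_{\bar j}\partial_i\partial_{\bar k}f$, which follows from $d\omega=0$.
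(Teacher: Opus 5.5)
Your strategy is sound, but the constants in your write-up do not close: there are two factor-of-$4$ errors that cancel. Your pointwise identities $|B|^2=8\sum_{i,j}|f_{i\bar j}|^2$ and $\Delta f=4\sum_i f_{i\bar i}=2\Delta_\omega f$ are correct in a frame where $\partial_{x_k},\partial_{y_k}$ are orthonormal.

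In that same frame, however, your integration by parts yields $\int_Y\sum_{i,j}|f_{i\bar j}|^2\,dV_g=\int_Y(\sum_i f_{i\bar i})^2\,dV_g$. Since $\sum_i f_{i\bar i}=\frac12\Delta_\omega f$, your displayed identity $\int_Y\sum_{i,j}|f_{i\bar j}|^2\,dV_g=\int_Y(\Delta_\omega f)^2\,dV_g$ is off by a factor $4$. For example, for $f=\cos x_1$ on a flat torus, $\sum|f_{i\bar j}|^2=\frac1{16}\cos^2x_1$ while $(\Delta_\omega f)^2=\frac14\cos^2x_1$.

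The slip is the step $f_{i\bar i\bar j}=(\Delta_\omega f)_{\bar j}$, which identifies $g^{i\bar j}f_{i\bar j}$ with $\tr_\omega\ddc f$. That identification holds for $\omega=i g_{i\bar j}dz^i\wedge d\bar z^j$. Your constants $8$ and $4$, however, presuppose the normalization in which $g_{i\bar j}=\delta_{ij}$ makes $\partial_{x_k},\partial_{y_k}$ orthonormal, and there $\Delta_\omega f=2g^{i\bar j}f_{i\bar j}$.

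Your final line then compensates with an unexplained factor: $8\int(\Delta_\omega f)^2=2\int(\Delta f)^2$, not $2\int(\Delta f)^2\cdot\frac14$. The consistent chain is
\[
 \int_Y|B|^2\,dV_g=8\int_Y\sum_{i,j}|f_{i\bar j}|^2\,dV_g
 =8\int_Y\Bigl(\sum_i f_{i\bar i}\Bigr)^2dV_g
 =8\int_Y\Bigl(\frac{\Delta f}{4}\Bigr)^2dV_g
 =\frac12\int_Y(\Delta f)^2\,dV_g .
\]
The constant $\frac12$ is the whole content of the lemma, and you explicitly said the model computation fixed the constants, so this must be corrected. Your own model $f=|z_1|^2$ would have caught it. There $\sum|f_{i\bar j}|^2=1$ but $(\Delta_\omega f)^2=4$, even though for a rank-one complex Hessian the two sides of the correct identity agree pointwise.

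With this correction your proof is valid, and it takes a somewhat different route from the paper's. The paper avoids index calculus. Stokes gives $\int_Y\ddc f\wedge\ddc f\wedge\omega^{n-2}=0$. In a unitary frame diagonalizing $B$ with paired eigenvalues $\nu_\alpha$, this integrand is a positive multiple of $\bigl((\sum_\alpha\nu_\alpha)^2-\sum_\alpha\nu_\alpha^2\bigr)\omega^n$. The relations $|B|^2=2\sum\nu_\alpha^2$ and $\Delta f=2\sum\nu_\alpha$ then give the constant directly in terms of the real eigenvalues of $B$, so no complex normalization ever enters.

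Both arguments amount to the vanishing of $\int_Y\sigma_2$ of the complex Hessian. Yours shows the absence of a curvature term through torsion-freeness and the symmetry $\nabla_{\bar k}f_{i\bar j}=\nabla_{\bar j}f_{i\bar k}$, and it works uniformly in $n$; the paper handles $n=1$ separately as a pointwise identity. The paper's form-level argument makes closedness of $\omega$ the only input and sidesteps exactly the convention bookkeeping where your slip occurred.
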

\begin{proof}
For $n\ge2$, since the K\"ahler form is closed, Stokes' theorem gives
\[
 \int_Y\ddc f\wedge\ddc f\wedge\omega^{n-2}=0.
\]
At a given point, we choose a unitary frame which diagonalizes the
$J$-invariant real tensor $B$, and write its eigenvalues, which occur
in pairs, as $\nu_1,\nu_1,\ldots,\nu_n,\nu_n$. The trace identity for
the square of the real $(1,1)$-form $\ddc f$ then implies
\[
 \int_Y\left(\sum_\alpha \nu_\alpha^2\right)dV_g
 =\int_Y\left(\sum_\alpha \nu_\alpha\right)^2dV_g.
\]
Since $|B|^2=2\sum \nu_\alpha^2$ and $\Delta f=2\sum \nu_\alpha$,
this is exactly \eqref{eq:hessianidentity}. In complex dimension one,
the identity holds pointwise. We note that no Ricci curvature term
appears in this identity, which only controls the $J$-invariant
component of the Hessian.
\end{proof}

\subsection{Logarithmic gradient energy}
\begin{lemma}\label{lem:logenergy}
Let $(Y,g)$ be a closed K\"ahler manifold, and suppose that
$-\Delta f=\lambda f$ and $\|f\|_2=1$. We write $\rho=\rho_g$. Then,
for $\lambda$ in a fixed bounded subset of $[0,\infty)$,
\begin{equation}\label{eq:logenergy}
 \int_Y(h^2+|\nabla h|^2)\,dV_g
 \le C\left(\vol(Y,g)+\lambda+\lambda^2+
              \int_Y\rho\log(1+\rho)\,dV_g\right).
\end{equation}
\end{lemma}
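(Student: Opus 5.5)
The plan is to integrate the pointwise differential inequality for $h$ against $h$ itself on the closed manifold $Y$. The Ricci entropy then appears through a Young-type inequality, and the $J$-invariant Hessian term is controlled by Lemma~\ref{lem:hessian}. Throughout, $\|f\|_2=1$ gives $\int_Y|\nabla f|^2\,dV_g=\lambda$, and $0\le h\le|\nabla f|^2$.

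\emph{Step 1: the zeroth order term.} Since $\log^2(1+t)\le C(1+t)$ for $t\ge0$, we get
\[
 \int_Yh^2\,dV_g\le C\bigl(\vol(Y,g)+\lambda\bigr).
\]

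\emph{Step 2: a pointwise inequality with variable Ricci bound.} The proof of Lemma~\ref{lem:drift} is purely pointwise. At each point it only uses $\ric(\nabla f,\nabla f)\ge-\rho(x)|\nabla f|^2$, so it holds with the function $\rho=\rho_g$ in place of the constant $\rho$. The one change is to keep the constant explicit instead of writing $-C(1+\rho)$; this gives
\[
 \Delta h\ge-2\lambda-2\rho-Ce^{-h/2}|B||\nabla h|-Ce^{-h}|B|^2 .
\]
This is valid everywhere on $Y$, including critical points of $f$, where $\Delta h=2|H|^2\ge0$.

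\emph{Step 3: test against $h$.} Since $Y$ is closed and $h$ is smooth and nonnegative, integration by parts and Step~2 give
\[
 \int_Y|\nabla h|^2=-\int_Y h\Delta h
 \le\int_Y h\bigl(2\lambda+2\rho+Ce^{-h/2}|B||\nabla h|+Ce^{-h}|B|^2\bigr).
\]
We bound the four terms separately.
\begin{itemize}
\item The $\lambda$-term is at most $2\lambda\int|\nabla f|^2=2\lambda^2$.
\item For the $\rho$-term, the Young inequality $st\le s\log(1+s)+e^{t}$ for $s,t\ge0$ (a convenient form of Legendre duality) gives $\rho h\le\rho\log(1+\rho)+1+|\nabla f|^2$. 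Hence $\int\rho h\le \EntRic(g)+\vol(Y,g)+\lambda$.
\item The mixed term is bounded by $\tfrac12|\nabla h|^2+C\,h^2e^{-h}|B|^2$, and $h^2e^{-h}$ is bounded.
\item In the last term $he^{-h}$ is bounded.
\end{itemize}
Absorbing $\tfrac12\int|\nabla h|^2$ into the left side leaves a right-hand side controlled by $\int|B|^2$. By Lemma~\ref{lem:hessian} this equals $\lambda^2/2$. Combining with Step~1 yields \eqref{eq:logenergy}.

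The main obstacle I expect is Step~2, namely making sure the drift inequality holds with the nonconstant function $\rho$ and with constants independent of $\rho$. In particular, the case split $|\nabla f|\ge1$ versus $|\nabla f|<1$ in the proof of Lemma~\ref{lem:drift} must produce only the mixed and $|B|^2$ terms, without any hidden factor of $\rho$. I expect this to hold because $\ric$ enters only linearly through Bochner's identity. The choice of test function $h$ is exactly what turns the $L^1$-type curvature term into the $L\log L$ entropy, with the remaining factor $e^h=1+|\nabla f|^2$ controlled by the energy.
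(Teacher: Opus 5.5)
Your proposal is correct and follows essentially the paper's argument. The paper integrates $\Delta(1+h)^2=2(1+h)\Delta h+2|\nabla h|^2$ over $Y$, which amounts to testing the drift inequality against $1+h$ instead of $h$; the two differ only by $\int_Y\Delta h=0$. It then uses the same inequality $\rho h\le\rho\log(1+\rho)+e^h$, the identity $\int_Ye^h=\vol(Y,g)+\lambda$ and Lemma~\ref{lem:hessian}. Your explicit check that Lemma~\ref{lem:drift} holds pointwise with the variable function $\rho_g$ and constants independent of $\rho$ is exactly what the paper uses implicitly.
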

\begin{proof}
We apply Lemma~\ref{lem:drift} and the chain rule to $(1+h)^2$:
\[
 \Delta(1+h)^2=2(1+h)\Delta h+2|\nabla h|^2.
\]
By Young's inequality, the term $C(1+h)e^{-h/2}|B||\nabla h|$ can be
absorbed into one copy of $|\nabla h|^2$, at the cost of
$C(1+h)^2e^{-h}|B|^2$. Since
$\sup_{h\ge0}(1+h)^2e^{-h}<\infty$, it follows that
\begin{equation}\label{eq:chainentropy}
 \Delta(1+h)^2
 \ge |\nabla h|^2-C|B|^2-C(1+h)(1+\rho).
\end{equation}
The elementary entropy inequality
\begin{equation}\label{eq:entropydual}
 \rho h\le \rho\log(1+\rho)+e^h\qquad(h\ge0,\ \rho\ge0)
\end{equation}
follows from $\rho h\le \rho\log\rho-\rho+e^h$, with the convention
$0\log0=0$.
Moreover, $\rho\le C(1+\rho\log(1+\rho))$ and
$h^2\le Ce^h$. Integrating \eqref{eq:chainentropy} over the closed
manifold and using
\[
 \int_Y e^h\,dV_g=\vol(Y,g)+\lambda,
 \qquad \int_Y|B|^2\,dV_g=\lambda^2/2,
\]
we obtain \eqref{eq:logenergy}. We note that this integration takes
place entirely on the smooth approximating manifold, and uses neither
integration by parts on the singular space nor any RCD estimate for
the target.
\end{proof}

\subsection{Spectral convergence}
\begin{lemma} \label{lem:spectral}
Let $0=\lambda_0\le\lambda_1\le\cdots$ be the eigenvalues of the
nonnegative Friedrichs Laplacian of the target metric, counted with
multiplicities. Then, for an approximation, $\lambda_{k,j}\to\lambda_k$
for every $k\ge0$. Moreover, after passing to a single subsequence,
there exist orthonormal real eigenbases $\{f_{k,j}\}_{k\ge0}$ on $Y$
such that
\[
 -\Delta_{g_j}f_{k,j}=\lambda_{k,j}f_{k,j},\qquad
 \lambda_{k,j}\longrightarrow\lambda_k,\qquad
 f_{k,j}\longrightarrow f_k\quad\hbox{in }C^\infty_{\mathrm{loc}}(M),
\]
where $\{f_k\}$ is an orthonormal eigenbasis for the target, including
all multiplicities.
\end{lemma}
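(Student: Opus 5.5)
The plan is the standard compactness-and-identification argument for spectral convergence, run with the uniform estimates of Theorem~\ref{thm:background}. The two ingredients are a uniform Sobolev/heat kernel bound along the approximation, which gives compactness of eigenfunctions, and the local smooth convergence $\omega_j\to\pi^*\omega$ on $M=Y\setminus E$, which identifies the limits. Two further facts tie the approximations to the target. First, $\mu_j:=\omega_j^n$ has uniform $L^r(\theta_Y^n)$ bounds. Second, $E$ has zero capacity for the target, by \eqref{eq:cutoffs}.

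\emph{Step 1 (upper bound, $\limsup\lambda_{k,j}\le\lambda_k$).} Take the first $k+1$ target eigenfunctions. By \eqref{eq:domain} they can be approximated in $W^{1,2}(X)$ by $C_c^\infty(M)$ functions, spanning a $(k+1)$-dimensional space $V$ with Rayleigh quotients at most $\lambda_k+\delta$. Since $\operatorname{supp}$ of these functions is a compact $K\subset M$ on which $g_j\to g$ smoothly, the Rayleigh quotients and Gram matrices computed with $g_j$ converge to those computed with $g$. Min–max then gives $\lambda_{k,j}\le\lambda_k+2\delta$ for large $j$.

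\emph{Step 2 (compactness).} Fix $k$. We have $\lambda_{k,j}\le C$, and eigenvalues are lower semicontinuous, so \eqref{eq:spectrum} and \eqref{eq:heat} give uniform bounds. From $f_{k,j}=e^{\lambda_{k,j}t}H_j(t)f_{k,j}$ with $t=1$ we get $\|f_{k,j}\|_\infty\le C$, uniformly in $j$. On each compact $K\subset M$ the metrics are uniformly equivalent with uniformly bounded derivatives. Interior elliptic estimates then give $C^\ell(K)$ bounds, and a diagonal argument produces a subsequence with $f_{k,j}\to f_k$ in $C^\infty_{\rm loc}(M)$ for all $k$ simultaneously, together with $\lambda_{k,j}\to\mu_k\le\lambda_k$.

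\emph{Step 3 (no mass loss, identification).} This is the main obstacle. Orthonormality passes to the limit only if no $L^2$ mass of $f_{k,j}$ escapes into a neighbourhood $U_\delta$ of $E$. Here I would use uniform integrability. We have $\int_{U_\delta}f_{k,j}^2\,d\mu_j\le C\mu_j(U_\delta)$, and this tends to $0$ uniformly in $j$ as $\delta\to0$, because of the uniform $L^r$ density bound and the fact that $\theta_Y^n(U_\delta)\to0$. Hence $\int f_kf_l\,\omega^n=\delta_{kl}$. Fatou and local convergence give $\int_M|\nabla f_k|^2\le\liminf\mu_{k,j}$, so Lemma~\ref{lem:removal} places $f_k$ in $W^{1,2}(X)$. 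The equation $-\Delta f_k=\mu_kf_k$ holds smoothly on $M$. Testing it against $\chi_k\zeta$ and removing the cutoffs via \eqref{eq:cutoffs}, as in Lemma~\ref{lem:removal}, shows that $f_k$ is a weak eigenfunction of the Friedrichs Laplacian. We now have $k+1$ orthonormal target eigenfunctions with eigenvalues $\mu_0\le\dots\le\mu_k$, so min–max forces $\lambda_k\le\mu_k$. Combined with Step 1 this gives $\mu_k=\lambda_k$. The system $\{f_k\}$ is orthonormal and contains, for every eigenvalue, as many functions as its multiplicity, so it is an eigenbasis.

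The convergence $\lambda_{k,j}\to\lambda_k$ along the full sequence follows because every subsequence has a further subsequence converging to the same limit $\lambda_k$.
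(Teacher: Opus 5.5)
Your proposal is correct and follows essentially the same route as the paper. It has the same sequence of steps: the min--max upper bound via $C_c^\infty(M)$ approximants of the target eigenfunctions, uniform $L^\infty$ bounds from the heat kernel, local smooth compactness, no loss of $L^2$ mass near $E$ thanks to the uniform $L^r$ density bound, identification of the limits as Friedrichs eigenfunctions, a second min--max giving $\lambda_k\le\mu_k$, and the subsequence argument for the full sequence.

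The differences are minor:
\begin{itemize}
\item The paper takes $t=\tfrac12$ so that $H_j(2t,x,x)$ stays in the range $0<t\le1$ of \eqref{eq:heat}. With your $t=1$, bound $|f_{k,j}(x)|\le e^{\lambda_{k,j}}\sup H_j(1,\cdot,\cdot)\,\|f_{k,j}\|_{1}$ instead.
\item The paper extends the weak equation by density of $C_c^\infty(M)$ in $W^{1,2}(X)$, whereas you use cutoffs.
\item Your appeal to lower semicontinuity of eigenvalues in Step 2 is unnecessary and should be dropped, since that lower bound is exactly what Step 3 proves.
\end{itemize}
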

\begin{proof}
We use the uniform heat kernel estimate of
\cite[Theorem 2.2]{GPSS2}, which was verified for the approximating
family in Section~\ref{sec:prelim}, and the density of
$C_c^\infty(M)$ in $W^{1,2}(X)$ established in
\cite[Proposition 8.1]{GPSS2}. Here $M$ is identified with
$Y\setminus E$, and all inner products and energy forms below are
taken with respect to the corresponding Riemannian measures.

First, we will obtain upper bound for the eigenvalues. Fix $k$ and $\delta>0$. We approximate the first $k+1$ elements of an
orthonormal eigenbasis of the target in $W^{1,2}(X)$ by functions
$\phi_0,\ldots,\phi_k\in C_c^\infty(M)$. These functions can be chosen
so that their span has dimension $k+1$ and every nonzero element
$\phi$ of the span has Rayleigh quotient at most $\lambda_k+\delta$.
Indeed, under these approximations, the $L^2$ Gram matrix and the
energy matrix converge to those of the chosen eigenfunctions. We
extend the $\phi_i$ by zero to $Y$. Since $g_j$ converges locally
smoothly on the common compact support of these functions, both
matrices computed with respect to $g_j$ converge as well. The min--max
principle then gives
\[
 \limsup_{j\to\infty}\lambda_{k,j}\le\lambda_k+\delta.
\]
Letting $\delta\downarrow0$, we obtain the upper bound. In particular,
the first $k+1$ eigenvalues of the approximating metrics are
uniformly bounded.

Now we will obtain the heat kernel bound for eigenfunctions. For an $L^2$-normalized eigenfunction $f_{i,j}$, the symmetry of the
heat kernel and the semigroup identity give, for $0<t\le\tfrac12$,
\begin{align*}
 |f_{i,j}(x)|
 &=e^{t\lambda_{i,j}}
   \left|\int_YH_j(t,x,y)f_{i,j}(y)\,dV_{g_j}(y)\right|\\
 &\le e^{t\lambda_{i,j}}
       \left(\int_YH_j(t,x,y)^2\,dV_{g_j}(y)\right)^{1/2}\\
 &=e^{t\lambda_{i,j}}H_j(2t,x,x)^{1/2}.
\end{align*}
Taking $t=\tfrac12$ in \eqref{eq:heat}, we obtain
$\|f_{i,j}\|_\infty\le C_k$ for $0\le i\le k$, uniformly in $j$.
We note that this estimate does not use any Ricci bound.

By local elliptic estimates and the local smooth convergence of the
metrics, after passing to a subsequence, we have
$\lambda_{i,j}\to\ell_i$ and
$f_{i,j}\to f_i$ in $C^\infty_{loc}(M)$ for $0\le i\le k$.
The uniform $L^r$ bound for the densities gives, for every
neighborhood $U$ of $E$,
\begin{equation}\label{eq:uniformintegrability}
 \sup_j\vol_{g_j}(U)\le C\left(\int_U\theta_Y^n\right)^{1-1/r}.
\end{equation}
Consequently,
\[
 \sup_j\int_U|f_{i,j}f_{m,j}|\,dV_{g_j}
 \le C_k^2\sup_j\vol_{g_j}(U)\longrightarrow0
 \quad\hbox{as }U\downarrow E.
\]
By local convergence and Fatou's lemma, the same bound holds for the
limiting products. Therefore, integrating first outside $U$ and then
letting $U\downarrow E$, we obtain
$\int_M f_i f_m\,dV_g=\delta_{im}$. In particular, no normalized
eigenfunction loses mass along the exceptional locus.

Applying Fatou's lemma on a compact exhaustion, we have
\[
 \int_M|\nabla f_i|^2\,dV_g
 \le\liminf_j\int_Y|\nabla f_{i,j}|^2\,dV_{g_j}=\ell_i.
\]
Hence $f_i\in W^{1,2}(X)$ by Lemma~\ref{lem:removal}. Passing to the
limit in the equations tested against $\phi\in C_c^\infty(M)$, we
obtain
\[
 \int_M\langle\nabla f_i,\nabla\phi\rangle\,dV_g
 =\ell_i\int_M f_i\phi\,dV_g.
\]
Both sides are continuous with respect to the $W^{1,2}(X)$ norm, and
by density the identity extends to every $\phi\in W^{1,2}(X)$.
Therefore the limit $f_i$ is a Friedrichs eigenfunction with
eigenvalue $\ell_i$, and its energy equals $\ell_i$.
The span of $f_0,\ldots,f_k$ has dimension $k+1$ and maximal
Rayleigh quotient $\ell_k$, so a second application of the min--max
principle yields
\[
 \lambda_k\le\ell_k
 \le\limsup_j\lambda_{k,j}\le\lambda_k.
\]
Since this argument applies to a further subsequence of any
subsequence, we conclude that $\lambda_{k,j}\to\lambda_k$ for the
whole sequence.

Finally, we take a diagonal subsequence over $k$ and over a compact
exhaustion of $M$. The limits are orthonormal eigenfunctions with
the ordered eigenvalues $\lambda_k$. Since the resolvent is compact,
each eigenspace is finite dimensional. For each eigenvalue, the number
of recovered orthonormal eigenfunctions equals its multiplicity, and
hence they span the corresponding eigenspace. Therefore the recovered
family is a complete eigenbasis. We note that no basis needs to be
prescribed in advance in an eigenspace of higher multiplicity.
\end{proof}

\subsection{Proof of Proposition~\ref{prop:analytic}}
\begin{proof}
We choose any sequence $\eps_j\downarrow0$ and apply
Lemma~\ref{lem:spectral} to the approximating metrics
$\omega_{\eps_j}$. Let $f$ be an element of the recovered eigenbasis
of the target. Its smooth approximations have bounded eigenvalues and
unit $L^2$ norm. By Lemmas~\ref{lem:hessian} and \ref{lem:logenergy},
the corresponding tensors $B_j$ and the logarithmic gradients satisfy
uniform $L^2$ bounds. By the local smooth convergence of the metrics
and the functions, together with Fatou's lemma on an exhaustion of
$M$, we obtain
\[
 \int_M|B|^2\,dV_g<\infty,
 \qquad \int_M(h^2+|\nabla h|^2)\,dV_g<\infty.
\]
It then follows from Lemma~\ref{lem:bootstrap} that $f$ is Lipschitz.
Since each eigenspace is finite dimensional and spanned by recovered
eigenfunctions, every eigenfunction is Lipschitz. The asserted
noncollapsed RCD conclusion now follows from
Proposition~\ref{thm:criterion}.
\end{proof}

\section{Bounding Ricci entropy}\label{sec:bounding}

Throughout this section, all estimates are uniform for
$0<\eps\le1/4$.

\subsection{Negative Ricci representation}
We define the smooth nonnegative $(1,1)$-forms
\begin{equation}\label{eq:spike}
 P_{i,\eps}=\ddc\log(|s_i|_{h_i}^2+\eps)
                   +\frac{|s_i|_{h_i}^2}{|s_i|_{h_i}^2+\eps}\Theta_i.
\end{equation}
If $Ds_i$ denotes the $(1,0)$ Chern derivative of the section $s_i$,
then a direct calculation in a local frame gives
\begin{equation}\label{eq:spikeformula}
 P_{i,\eps}
 =\frac{\eps}{(|s_i|_{h_i}^2+\eps)^2}
       i\langle Ds_i\wedge\overline{Ds_i}\rangle_{h_i}\ge0,
 \qquad P_{i,\eps}\le\frac{C}{|s_i|_{h_i}^2}\theta_Y\quad\hbox{off }E_i.
\end{equation}
For the last estimate, we use that the section and its covariant
derivative satisfy fixed smooth bounds, and that
$\eps/(|s_i|_{h_i}^2+\eps)^2\le1/(4|s_i|_{h_i}^2)$.

Substituting \eqref{eq:spike} into the Ricci identity
\eqref{eq:ricciPsi} and using the fixed lower bound for
$\ddc(\pi^*F)$, we obtain
\begin{equation}\label{eq:riccispike}
 \ric(\omega_\eps)\ge-\omega_\eps-C\theta_Y
                                  -\sum_{a_i>0}a_iP_{i,\eps}.
\end{equation}
Hence the smooth nonnegative function
\begin{equation}\label{eq:rho}
 R_\eps
 =1+C\tr_{\omega_\eps}\theta_Y+
       \sum_{a_i>0}a_i\tr_{\omega_\eps}P_{i,\eps}
\end{equation}
dominates the intrinsic negative Ricci function
$\rho_\eps:=\rho_{g_\eps}$, that is,
\[
 \ric(g_\eps)\ge-R_\eps g_\eps,
 \qquad 0\le\rho_\eps\le R_\eps.
\]
Here the constant term $1$ accounts for $-\omega_\eps$ in
\eqref{eq:riccispike}.

\begin{lemma} \label{lem:logrho}
There exists a constant $C$, independent of $\eps$, such that on
$Y\setminus E$,
\begin{equation}\label{eq:logrho}
 \log(1+R_\eps)\le C\left(1+\sum_k\bigl(-\log|s_k|_{h_k}^2\bigr)\right).
\end{equation}
In particular, the estimate holds almost everywhere on $Y$ for every
smooth approximating metric.
\end{lemma}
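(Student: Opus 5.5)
The plan is to bound each of the three summands of $R_\eps$ in \eqref{eq:rho} by a polynomial in $\prod_k|s_k|_{h_k}^{-2}$, uniformly in $\eps$. Taking logarithms then gives \eqref{eq:logrho}, since $\log(1+C\prod_k|s_k|^{-2N})\le C'(1+N\sum_k(-\log|s_k|^2_{h_k}))$. Here we use the normalization $|s_k|_{h_k}^2\le e^{-2}$, which makes every $-\log|s_k|^2_{h_k}\ge2$ positive. This normalization is what lets additive constants be absorbed into the right-hand side.

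\textbf{Step 1 (trace term).} Lemma~\ref{lem:inverse} gives directly, uniformly in $\eps$ on $Y\setminus E$,
\[
 \tr_{\omega_\eps}\theta_Y\le C\prod_i|s_i|_{h_i}^{-2N}.
\]

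\textbf{Step 2 (spike terms).} For $a_i>0$ I would use the pointwise comparison in \eqref{eq:spikeformula},
\[
 P_{i,\eps}\le C|s_i|_{h_i}^{-2}\theta_Y \quad\text{off } E_i .
\]
Since $P_{i,\eps}\ge0$ and $\omega_\eps>0$, taking traces preserves this inequality:
\[
 \tr_{\omega_\eps}P_{i,\eps}\le C|s_i|_{h_i}^{-2}\tr_{\omega_\eps}\theta_Y\le C|s_i|_{h_i}^{-2}\prod_k|s_k|_{h_k}^{-2N}.
\]
The constant is uniform, because $\eps/(|s_i|^2+\eps)^2\le 1/(4|s_i|^2)$ holds for all $\eps>0$.

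\textbf{Step 3 (conclusion).} Summing gives, on $Y\setminus E$,
\[
 1+R_\eps\le C\prod_k|s_k|_{h_k}^{-2(N+1)},
\]
where the $1$ is absorbed because each $|s_k|^{-2}\ge e^{2}\ge1$. Taking logarithms gives \eqref{eq:logrho} with a constant independent of $\eps$. The "almost everywhere" statement follows because $E$ is a divisor, hence has measure zero for the smooth measure $dV_{g_\eps}$.

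I do not expect a real obstacle here: the only substantive input is the uniform inverse trace bound of Lemma~\ref{lem:inverse}. The one point to check carefully is that the bound in \eqref{eq:spikeformula} is genuinely $\eps$-independent near $E_i$.
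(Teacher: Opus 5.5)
Your proposal is correct and follows essentially the same route as the paper. You combine the inverse trace bound of Lemma~\ref{lem:inverse} with the $\eps$-uniform pointwise spike bound in \eqref{eq:spikeformula} to get a polynomial bound for $R_\eps$ in $\prod_k|s_k|_{h_k}^{-2}$, then take logarithms using the normalization $-\log|s_k|_{h_k}^2\ge2$.
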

\begin{proof}
By \eqref{eq:inverse} and \eqref{eq:spikeformula}, there exist fixed
constants $C,N>0$ such that
\[
 R_\eps\le
 C\prod_i|s_i|_{h_i}^{-2N}\left(1+\sum_i|s_i|_{h_i}^{-2}\right)
 \quad\hbox{off }E.
\]
Taking logarithms and using $-\log|s_k|_{h_k}^2\ge2$, we obtain the
estimate. The last assertion follows since $E$ has zero volume.
\end{proof}

\subsection{The entropy estimate}

\begin{lemma}\label{lem:logmoment}
For each exceptional component $E_k$,
\begin{equation}\label{eq:logmoment}
 \int_Y\bigl(-\log|s_k|_{h_k}^2\bigr)\,\theta_Y\wedge\omega_\eps^{n-1}\le C.
\end{equation}
\end{lemma}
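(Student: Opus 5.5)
We prove \eqref{eq:logmoment} by integrating by parts against the closed positive form $\theta_Y\wedge\omega_\eps^{n-1}$, using the Poincar\'e--Lelong formula \eqref{eq:poincarelelong} and the uniform bound $\|u_\eps\|_\infty\le C$ of Lemma~\ref{lem:C0}. The point is that $-\log|s_k|_{h_k}^2$ is, up to a smooth term, quasi-plurisubharmonic with a pole, so its integral against a positive closed current in a fixed cohomology class is controlled by cohomological data and a uniform lower bound. Write $\omega_\eps=\alpha_\eps+\ddc u_\eps$ with $\alpha_\eps\le C\theta_Y$, and set $\ell_k=\log|s_k|_{h_k}^2\le-2$.

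\textbf{Step 1: a global normalization.} Integrating $\ell_k$ against the fixed smooth volume form $\theta_Y^n$ gives a finite constant, since $\log|s_k|^2\in L^1$. \textbf{Step 2: replace $\omega_\eps$ by $\alpha_\eps$ one factor at a time.} Write $\omega_\eps^{n-1}=\alpha_\eps\wedge\omega_\eps^{n-2}+\ddc u_\eps\wedge\omega_\eps^{n-2}$. The first piece gives $\int(-\ell_k)\theta_Y\wedge\alpha_\eps\wedge\omega_\eps^{n-2}\le C\int(-\ell_k)\theta_Y^2\wedge\omega_\eps^{n-2}$, since $-\ell_k\ge0$. For the second piece, use Stokes' theorem; this is justified on the compact manifold $Y$ because $\omega_\eps$ and $u_\eps$ are smooth and $\ell_k$ is $L^1$ with $\ddc\ell_k$ a current of order zero. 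We get
\[
 \int_Y(-\ell_k)\,\theta_Y\wedge\ddc u_\eps\wedge\omega_\eps^{n-2}
 =\int_Y u_\eps\,\theta_Y\wedge(\Theta_k-2\pi[E_k])\wedge\omega_\eps^{n-2}.
\]
The smooth part is bounded by $\|u_\eps\|_\infty\int_Y\theta_Y\wedge|\Theta_k|\wedge\omega_\eps^{n-2}\le C$, since $|\Theta_k|\le C\theta_Y$ and the mixed masses $\int\theta_Y^{j}\wedge\omega_\eps^{n-j}$ are cohomological and uniformly bounded. The divisor part is $2\pi\int_{E_k}(-u_\eps)\theta_Y\wedge\omega_\eps^{n-2}\le C\|u_\eps\|_\infty\int_{E_k}\theta_Y\wedge\omega_\eps^{n-2}$, which is again cohomological, namely $[E_k]\cdot[\theta_Y]\cdot[\alpha_\eps]^{n-2}$, and hence bounded. \textbf{Step 3: induct.} Repeating the same splitting on $\int(-\ell_k)\theta_Y^{j}\wedge\omega_\eps^{n-j}$ for $j=1,\dots,n$ reduces everything to $\int(-\ell_k)\theta_Y^n\le C$ plus uniformly bounded error terms. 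Each error term involves $\|u_\eps\|_\infty$ times mixed intersection numbers of $[\theta_Y]$, $[\alpha_\eps]$, $[\Theta_k]$ and $[E_k]$.

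\textbf{Main obstacle.} The main difficulty is rigor in the integration by parts across the pole of $\ell_k$. To handle it, we would first replace $\ell_k$ by the smooth regularization $\log(|s_k|_{h_k}^2+\delta)$. For this function, Stokes' theorem holds classically, and $\ddc\log(|s_k|^2+\delta)=P_{k,\delta}-\frac{|s_k|^2}{|s_k|^2+\delta}\Theta_k$ by \eqref{eq:spike}. The term involving $u_\eps P_{k,\delta}$ is bounded by $\|u_\eps\|_\infty\int P_{k,\delta}\wedge\theta_Y\wedge\omega_\eps^{n-2}$. Because $P_{k,\delta}\ge0$ is closed modulo the bounded term, this mass equals a cohomological quantity plus $O(1)$, uniformly in $\delta$. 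We then let $\delta\to0$ by monotone convergence, since $-\log(|s_k|^2+\delta)$ increases to $-\ell_k$. Signs need care, since $u_\eps$ has no sign; but only $|u_\eps|\le C$ and positivity of $P_{k,\delta}$ are used, so the bound is two-sided and no sign is needed.
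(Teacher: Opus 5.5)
Your proof is correct and follows the paper's argument. The paper likewise integrates by parts using $\omega_\eps-\alpha_\eps=\ddc u_\eps$, the Poincar\'e--Lelong formula, $\|u_\eps\|_\infty\le C$ and cohomologically bounded mixed masses; the only differences are that it expands $\omega_\eps^{n-1}-\alpha_\eps^{n-1}$ telescopically in one step instead of peeling off one factor at a time with $\alpha_\eps\le C\theta_Y$, and that it leaves the integration by parts to the distributional Poincar\'e--Lelong formula rather than using your (harmless) regularization $\log(|s_k|_{h_k}^2+\delta)$.
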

\begin{proof}
We expand $\omega_\eps^{n-1}-\alpha_\eps^{n-1}$ using
$\omega_\eps-\alpha_\eps=\ddc u_\eps$. By the Poincar\'e--Lelong
formula and integration by parts, we have
\begin{align*}
 &\left|\int_Y\bigl(-\log|s_k|_{h_k}^2\bigr)\theta_Y\wedge
              (\omega_\eps^{n-1}-\alpha_\eps^{n-1})\right|\\
 &\quad\le C\|u_\eps\|_\infty
 \left(\int_Y\theta_Y^2\wedge\alpha_\eps^{n-2}
       +\int_{E_k}\theta_Y\wedge\alpha_\eps^{n-2}\right)\le C.
\end{align*}
Here the mixed masses are bounded using $|\Theta_k|\le C\theta_Y$ and
$[\omega_\eps]=[\alpha_\eps]$, and all test forms are smooth for fixed
$\eps$. The reference integral is uniformly bounded since
$\alpha_\eps\le C\theta_Y$ and
$-\log|s_k|_{h_k}^2\in L^1(Y,\theta_Y^n)$.
\end{proof}

\begin{lemma} \label{lem:spikemoment}
For all $i,k$,
\begin{equation}\label{eq:spikemoment}
 \int_Y\bigl(-\log|s_k|_{h_k}^2\bigr)P_{i,\eps}\wedge\omega_\eps^{n-1}
 \le C+2\pi|\log\eps|\int_{E_k}\alpha_\eps^{n-1}.
\end{equation}
\end{lemma}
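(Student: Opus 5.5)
The plan is to split $P_{i,\eps}$ according to its definition \eqref{eq:spike}, move the $\ddc$ from $\log(|s_i|_{h_i}^2+\eps)$ onto the weight $-\log|s_k|_{h_k}^2$, and then apply the Poincar\'e--Lelong formula \eqref{eq:poincarelelong}. Smooth terms are controlled by Lemma~\ref{lem:logmoment}, and the divisor term produces the $|\log\eps|$ contribution. For fixed $\eps$ all forms except the weight are smooth. The weight $-\log|s_k|_{h_k}^2\ge2$ is integrable and quasi-plurisubharmonic up to sign, so every manipulation is legitimate for fixed $\eps$.

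\emph{Step 1: the curvature term.} The second term of \eqref{eq:spike} satisfies $0\le\frac{|s_i|^2}{|s_i|^2+\eps}\le1$ and $|\Theta_i|\le C\theta_Y$. Its contribution is therefore bounded by
\[
 C\int_Y\bigl(-\log|s_k|_{h_k}^2\bigr)\theta_Y\wedge\omega_\eps^{n-1},
\]
which is at most $C$ by Lemma~\ref{lem:logmoment}.

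\emph{Step 2: integration by parts.} For the first term, I would write
\[
 \int_Y\bigl(-\log|s_k|^2\bigr)\ddc\log(|s_i|^2+\eps)\wedge\omega_\eps^{n-1}
 =\int_Y\log(|s_i|^2+\eps)\,\ddc\bigl(-\log|s_k|^2\bigr)\wedge\omega_\eps^{n-1}.
\]
The right-hand side equals
\[
 \int_Y\log(|s_i|^2+\eps)\,\Theta_k\wedge\omega_\eps^{n-1}
 -2\pi\int_{E_k}\log(|s_i|^2+\eps)\,\omega_\eps^{n-1}.
\]
To justify the first identity, I would replace $-\log|s_k|^2$ by the smooth functions $-\log(|s_k|^2+\delta)$, apply Stokes' theorem on the closed manifold $Y$, and let $\delta\downarrow0$. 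The left-hand side converges by monotone convergence. On the right, $\ddc\log(|s_k|^2+\delta)$ converges weakly to $2\pi[E_k]-\Theta_k$, tested against the smooth form $\log(|s_i|^2+\eps)\,\omega_\eps^{n-1}$.

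\emph{Step 3: bounding the two pieces.} Since $|s_i|^2\le e^{-2}$ and $\eps\le1/4$, we have $\log(|s_i|^2+\eps)\le0$. This gives two bounds. First,
\[
 |\log(|s_i|^2+\eps)|\le\min\bigl(-\log|s_i|^2,\;|\log\eps|\bigr).
\]
With $|\Theta_k|\le C\theta_Y$, the smooth piece is therefore bounded by $C\int_Y(-\log|s_i|^2)\,\theta_Y\wedge\omega_\eps^{n-1}$, which is at most $C$, again by Lemma~\ref{lem:logmoment} (applied with index $i$). Second, for the divisor piece the minus sign and $\log(|s_i|^2+\eps)\ge\log\eps$ give
\[
 -2\pi\int_{E_k}\log(|s_i|^2+\eps)\,\omega_\eps^{n-1}
 \le2\pi|\log\eps|\int_{E_k}\omega_\eps^{n-1}.
\]
Finally, $\omega_\eps^{n-1}$ is a smooth closed form cohomologous to $\alpha_\eps^{n-1}$, so $\int_{E_k}\omega_\eps^{n-1}=\int_{E_k}\alpha_\eps^{n-1}$. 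This yields \eqref{eq:spikemoment}.

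The only delicate point is the justification of the integration by parts in Step~2 against the singular weight. I expect it to be harmless because $\eps$ is fixed, so $\log(|s_i|^2+\eps)$ is smooth. The essential observation is that the divisor term carries exactly the sign that produces $|\log\eps|$ rather than an uncontrolled quantity.
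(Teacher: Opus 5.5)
Your proposal is correct and follows the paper's proof essentially step for step. You split $P_{i,\eps}$, move $\ddc$ onto $-\log|s_k|_{h_k}^2$, apply the Poincar\'e--Lelong formula, bound the $\Theta_i$ and $\Theta_k$ terms by Lemma~\ref{lem:logmoment} via $|\log(|s_i|_{h_i}^2+\eps)|\le-\log|s_i|_{h_i}^2$, and bound the $E_k$ term using $\log(|s_i|_{h_i}^2+\eps)\ge\log\eps$ and $[\omega_\eps]=[\alpha_\eps]$. One small fix in Step~2: the measure $\ddc\log(|s_i|_{h_i}^2+\eps)\wedge\omega_\eps^{n-1}$ is signed, so the limit $\delta\downarrow0$ on the left should come from dominated convergence with dominant $-\log|s_k|_{h_k}^2$, not monotone convergence. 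Alternatively you can skip the approximation, since the identity is just the distributional definition of $\ddc$ of the $L^1$ function $\log|s_k|_{h_k}^2$ tested against the smooth form $\log(|s_i|_{h_i}^2+\eps)\,\omega_\eps^{n-1}$.
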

\begin{proof}
We apply the Poincar\'e--Lelong formula to \eqref{eq:spike}. After
integrating by parts, the terms involving $\Theta_i$ and $\Theta_k$
are uniformly bounded by Lemma~\ref{lem:logmoment}, since
$|\log(|s_i|_{h_i}^2+\eps)|\le-\log|s_i|_{h_i}^2$ and
$0\le |s_i|_{h_i}^2/(|s_i|_{h_i}^2+\eps)\le1$. Hence
\begin{align}
 &\int_Y\bigl(-\log|s_k|_{h_k}^2\bigr)P_{i,\eps}\wedge\omega_\eps^{n-1}\notag\\
 &\quad\le C-2\pi\int_{E_k}\log(|s_i|_{h_i}^2+\eps)
                                      \omega_\eps^{n-1}\notag\\
 &\quad\le C+2\pi|\log\eps|\int_{E_k}\alpha_\eps^{n-1}.
 \label{eq:spikeibp}
\end{align}
where the last step uses $\log(|s_i|_{h_i}^2+\eps)\ge\log\eps$ and
$[\omega_\eps]=[\alpha_\eps]$. This calculation remains valid at the
crossings of the divisors, since $\log(|s_i|_{h_i}^2+\eps)$ is smooth
for fixed $\eps>0$.
\end{proof}

\begin{lemma}\label{lem:exceptionalmass}
Let $Z_k=\pi(E_k)$, $d_k=\dim_{\mathbb C}Z_k$, and $c_k=n-d_k$.
Then $c_k\ge2$ and
\begin{equation}\label{eq:exceptionalmass}
 \begin{split}
 \int_{E_k}\alpha_\eps^{n-1}
 &=\sum_{r=0}^{d_k}\binom{n-1}{r}\eps^{n-1-r}
                   \int_{E_k}\vartheta^r\wedge\theta_Y^{n-1-r}\\
 &\le C_k\eps^{c_k-1}\le C_k\eps.
 \end{split}
\end{equation}
\end{lemma}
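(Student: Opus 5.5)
The plan is to expand $\alpha_\eps^{n-1}=(\vartheta+\eps\theta_Y)^{n-1}$ by the binomial theorem, then use the vanishing of high powers of the pulled-back form $\vartheta=\pi^*\theta$ on $E_k$. First we justify $c_k\ge2$. Since $\pi$ is an isomorphism over $X_{\reg}$, each exceptional component $E_k$ maps into the singular set of $X$. Because $X$ is normal, its singular set has complex codimension at least two. Hence $d_k\le n-2$, that is, $c_k\ge2$.

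Next, we expand
\[
 \alpha_\eps^{n-1}=\sum_{r=0}^{n-1}\binom{n-1}{r}\eps^{n-1-r}\vartheta^r\wedge\theta_Y^{n-1-r},
\]
restrict to $E_k$ and integrate. The key claim is that $\vartheta^r|_{E_k}=0$ pointwise for $r>d_k$. Indeed, $\vartheta|_{E_k}=(\pi|_{E_k})^*(\theta|_{Z_k})$, and $\theta|_{Z_k}$ is a smooth form on the $d_k$-dimensional complex space $Z_k$ (smooth in the sense of local ambient embeddings). At a point $y\in E_k$, the differential of $\pi|_{E_k}$ factors through the Zariski tangent space of $Z_k$. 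On a dense open subset of $E_k$, namely the preimage of $(Z_k)_{\reg}$ where $\pi|_{E_k}$ has constant rank, this image has dimension at most $d_k$. So the pullback of any $(r,r)$-form with $r>d_k$ vanishes there, hence everywhere on $E_k$ by continuity. Alternatively, one can argue via the generic rank of $d(\pi|_{E_k})$, which is at most $d_k$; the vanishing of $\wedge^{r}$ of the image then holds on a dense open set, and continuity extends it. This truncates the sum at $r=d_k$ and gives the displayed identity.

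Finally, each remaining integral $\int_{E_k}\vartheta^r\wedge\theta_Y^{n-1-r}$ is a fixed finite number independent of $\eps$, since $\vartheta\le C\theta_Y$ and $E_k$ is compact. The smallest power of $\eps$ occurring is $\eps^{n-1-d_k}=\eps^{c_k-1}$. Since $\eps\le1/4<1$, all higher powers are bounded by it, and summing gives $\le C_k\eps^{c_k-1}$. Because $c_k-1\ge1$, this is at most $C_k\eps$.

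The only delicate step is the pointwise vanishing of $\vartheta^r$ on $E_k$ for $r>d_k$, since $Z_k$ may itself be singular. The generic-rank plus continuity argument handles this. An alternative that avoids pointwise statements is to note that $\int_{E_k}\vartheta^r\wedge\theta_Y^{n-1-r}$ is an intersection number. By the projection formula it involves $[\theta|_{Z_k}]^r=0$ in the cohomology of the $d_k$-dimensional space $Z_k$ for $r>d_k$.
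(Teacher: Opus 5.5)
Your proposal is correct and follows the paper's proof. Normality gives $c_k\ge2$, the fact that $\pi|_{E_k}$ has rank at most $d_k$ forces $\vartheta^r|_{E_k}=0$ for $r>d_k$, and the binomial expansion then has lowest power $\eps^{c_k-1}$. Your density-plus-continuity argument is a more detailed version of the paper's one-line rank remark; the continuity step can be skipped, since the maximal rank of a holomorphic map on the irreducible $E_k$ equals $\dim Z_k$, so the rank bound holds at every point.
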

\begin{proof}
Since $Z_k\subset X_{\mathrm{sing}}$ and $X$ is normal, we have
$c_k\ge2$. The map $\pi|_{E_k}$ has rank at most $d_k$, and hence
$(\vartheta|_{E_k})^{d_k+1}=0$. Expanding
$(\vartheta+\eps\theta_Y)^{n-1}$ gives \eqref{eq:exceptionalmass},
since the coefficients of the expansion are fixed and nonnegative,
and the smallest power of $\eps$ that appears is
$n-1-d_k=c_k-1\ge1$.
\end{proof}

This is precisely where complex codimension at least two is used in
the entropy construction. For an isolated singularity, the mass is
exactly $\eps^{n-1}\int_{E_k}\theta_Y^{n-1}$, and for a center of
positive dimension it still tends to zero at least linearly. By
contrast, a nonexceptional divisor has $c_k=1$, and its mass in
general has a nonzero limit. For this reason, the calculation does
not give the same bound for cone singularities along divisors with
negative curvature concentrated there, such as cone angles greater
than $2\pi$.

\begin{proposition} \label{prop:entropy}
The solutions of \eqref{eq:approxMA} satisfy
\begin{equation}\label{eq:entropyestimate}
 \int_Y\rho_\eps\log(1+\rho_\eps)
                  \omega_\eps^{n}
 \le C\left(1+|\log\eps|
                      \sum_k\eps^{c_k-1}\right)\le C.
\end{equation}
Consequently, every sequence $\eps_j\downarrow0$ gives an
approximation $\omega_{\eps_j}$ with uniformly bounded
Ricci entropy.
\end{proposition}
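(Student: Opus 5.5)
Since $0\le\rho_\eps\le R_\eps$ and $t\mapsto t\log(1+t)$ is increasing, it suffices to bound
\[
 \int_Y R_\eps\log(1+R_\eps)\,\omega_\eps^n .
\]
We use Lemma~\ref{lem:logrho} to replace the factor $\log(1+R_\eps)$ by
\[
 L:=C\Bigl(1+\sum_k\bigl(-\log|s_k|_{h_k}^2\bigr)\Bigr),
\]
which holds almost everywhere. The problem then reduces to bounding $\int_Y R_\eps L\,\omega_\eps^n$. The point is that each of the three pieces of $R_\eps$ in \eqref{eq:rho} is a trace with respect to $\omega_\eps$. The identity
\[
 (\tr_{\omega_\eps}\eta)\,\omega_\eps^n=n\,\eta\wedge\omega_\eps^{n-1}
\]
turns each weighted trace integral into a mixed integral of the type controlled in Lemmas~\ref{lem:logmoment} and~\ref{lem:spikemoment}.

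We now treat the pieces in turn.
\begin{itemize}
\item The constant term contributes $\int_Y L\,\omega_\eps^n$. By Lemma~\ref{lem:C0}, $\omega_\eps^n$ has a uniformly $L^r$ density with respect to $\theta_Y^n$, while $-\log|s_k|_{h_k}^2$ lies in every $L^{r'}(\theta_Y^n)$. H\"older's inequality then bounds this term uniformly.
\item The term $C\tr_{\omega_\eps}\theta_Y$ gives $nC\int_Y L\,\theta_Y\wedge\omega_\eps^{n-1}$. This is bounded by the total mass $\int_Y\theta_Y\wedge\alpha_\eps^{n-1}\le C$ together with Lemma~\ref{lem:logmoment}, applied for each $k$.
\item Each spike term $a_i\tr_{\omega_\eps}P_{i,\eps}$, with $a_i>0$, gives $n a_i\int_Y L\,P_{i,\eps}\wedge\omega_\eps^{n-1}$. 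Its constant part is the cohomological mass of $P_{i,\eps}$. This is bounded because $P_{i,\eps}$ equals $\ddc$ of a function plus $\tfrac{|s_i|^2}{|s_i|^2+\eps}\Theta_i$, the latter being bounded by $C\theta_Y$. Its logarithmic parts are bounded by Lemma~\ref{lem:spikemoment}, namely by $C+2\pi|\log\eps|\int_{E_k}\alpha_\eps^{n-1}$.
\end{itemize}

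Summing over $i$ and $k$ and inserting Lemma~\ref{lem:exceptionalmass}, $\int_{E_k}\alpha_\eps^{n-1}\le C_k\eps^{c_k-1}$, gives
\[
 C\Bigl(1+|\log\eps|\sum_k\eps^{c_k-1}\Bigr).
\]
Since $c_k\ge2$, we have $|\log\eps|\,\eps^{c_k-1}\le|\log\eps|\,\eps\le C$ for $\eps\le1/4$. This is \eqref{eq:entropyestimate}.

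For the final assertion, Corollary~\ref{cor:approximation} shows that every sequence $\omega_{\eps_j}$ is an approximation. Since $\omega_\eps^n$ is a fixed multiple of $dV_{g_\eps}$, the bound just proved is \eqref{eq:entropyhyp}.

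The main obstacle is the spike term. Lemma~\ref{lem:spikemoment} depends on the Poincar\'e--Lelong integration by parts producing only a boundary term on $E_k$ with weight $|\log\eps|$, and this is harmless only because the exceptional mass decays at least linearly, which uses normality through $c_k\ge2$. I would also check that the pointwise bound of Lemma~\ref{lem:logrho}, which holds only off $E$, suffices; it does, since $E$ is null for $\omega_\eps^n$.
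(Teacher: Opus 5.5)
Your proposal is correct and follows essentially the same argument as the paper. You reduce to $R_\eps$, weight by the bound of Lemma~\ref{lem:logrho}, and convert each trace to a mixed integral; the constant term is then handled by H\"older, the $\theta_Y$ term by Lemma~\ref{lem:logmoment}, and the spike terms by their cohomological mass, Lemma~\ref{lem:spikemoment} and Lemma~\ref{lem:exceptionalmass}.
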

\begin{proof}
Since the function $t\mapsto t\log(1+t)$ is increasing for $t\ge0$,
it suffices to bound the same integral with $R_\eps$ in
place of $\rho_\eps$. We multiply \eqref{eq:rho} by the upper bound
\eqref{eq:logrho}, and use the trace identity
\[
 (\tr_{\omega_\eps}\eta)\omega_\eps^n
       =n\eta\wedge\omega_\eps^{n-1}.
\]
The contribution of the constant term $1$ is uniformly bounded.
Indeed, if $r'=r/(r-1)$, then the uniform density bound and
H\"older's inequality give
\[
 \int_Y\left(1+\sum_k\bigl(-\log|s_k|_{h_k}^2\bigr)\right)\omega_\eps^n
 \le
 \left\|\frac{\omega_\eps^n}{\theta_Y^n}\right\|_{L^r(\theta_Y^n)}
 \left\|1+\sum_k\bigl(-\log|s_k|_{h_k}^2\bigr)\right\|_{L^{r'}(\theta_Y^n)}
 \le C.
\]
Here each function $-\log|s_k|_{h_k}^2$ has finite Lebesgue moments
of all orders, since $s_k$ is a fixed section defining a divisor,
measured with a smooth Hermitian metric.
The logarithmically weighted $\theta_Y$ terms are bounded by
Lemma~\ref{lem:logmoment}, and the logarithmically weighted
$P_{i,\eps}$ terms by Lemma~\ref{lem:spikemoment}. The corresponding
unweighted masses are also uniformly bounded; for the spike terms,
this follows from
\[
 \int_YP_{i,\eps}\wedge\omega_\eps^{n-1}
 =\int_Y\frac{|s_i|_{h_i}^2}{|s_i|_{h_i}^2+\eps}
                  \Theta_i\wedge\omega_\eps^{n-1}
 \le C\int_Y\theta_Y\wedge\alpha_\eps^{n-1};
\]
where we have used that the integral of
$\ddc\log(|s_i|_{h_i}^2+\eps)\wedge\omega_\eps^{n-1}$ is zero.
We have thus bounded the entropy of $R_\eps$, and since
$\rho_\eps\le R_\eps$, we obtain the more precise estimate
\begin{equation}\label{eq:entropyintersection}
 \int_Y\rho_\eps\log(1+\rho_\eps)
                       \omega_\eps^n
 \le C\left(1+|\log\eps|
                        \sum_k\int_{E_k}\alpha_\eps^{n-1}\right).
\end{equation}
Combined with Lemma~\ref{lem:exceptionalmass}, this gives
\eqref{eq:entropyestimate}.

Since $c_k-1\ge1$ and $0<\eps\le1/4$, we have
\[
 |\log\eps|\sum_k\eps^{c_k-1}
 \le C\eps|\log\eps|\longrightarrow0
 \qquad\hbox{as }\eps\downarrow0.
\]
This proves the uniform bound in \eqref{eq:entropyestimate}.
The assertion for every sequence $\eps_j\downarrow0$ then follows
from Corollary~\ref{cor:approximation}.
\end{proof}

\section{Proof of Theorem 1.1}\label{sec:proof}

\subsection{The smooth-twist case}\label{subsec:smoothtwistproof}

\begin{proof}[Proof of Proposition~\ref{prop:smoothtwist}]
By the twisted equation \eqref{tke} and the normality argument in
Section~\ref{sec:approximation}, the function
$F=\varphi+\psi$ appearing in \eqref{o-ma} is smooth.
For each $0<\eps\le1/4$, we solve \eqref{eq:approxMA} with the fixed
smooth factor $e^{-\pi^*F}$, and Lemma~\ref{lem:regularizeddensity}
gives uniform bounds for the density on its right hand side.
By Lemma~\ref{lem:C0}, Lemma~\ref{lem:inverse} and
Corollary~\ref{cor:approximation}, any sequence
$\omega_{\eps_j}$ with $\eps_j\downarrow0$ is an
approximation of the prescribed target metric, with a fixed density
exponent $r>1$, and all constants in the approximation are uniform.

We write $\rho_\eps=\rho_{g_\eps}$ for the intrinsic negative Ricci
functions.  \eqref{eq:rho} satisfies
$\rho_\eps\le R_\eps$, and by Proposition~\ref{prop:entropy} we have
\[
 \sup_{0<\eps\le1/4}\int_Y\rho_\eps\log(1+\rho_\eps)\,dV_{g_\eps}<\infty.
\]
Passing from $\omega_\eps^n$ to $dV_{g_\eps}=\omega_\eps^n/n!$
only changes a fixed constant factor. We note that all the estimates
above use the smoothness of the fixed form $\beta$ only through the
smooth function $F$, and do not require any sign condition on $\beta$.

We now apply Proposition~\ref{prop:analytic} after scaling by $B$.
The metrics $B\omega_\eps$ give a tame approximation of $B\omega$,
with background forms $B\theta$ and $B\theta_Y$, and the scaled target
satisfies $\ric(B\omega)=\ric(\omega)\ge-B\omega$.
For the corresponding Riemannian metrics, we have
\[
 \rho_{Bg_\eps}=B^{-1}\rho_{g_\eps},\qquad
 dV_{Bg_\eps}=B^n dV_{g_\eps}.
\]
Since $B\ge1$, it follows that
\[
 \EntRic(Bg_\eps)
 =B^{n-1}\int_Y\rho_{g_\eps}
             \log(1+B^{-1}\rho_{g_\eps})\,dV_{g_\eps}
 \le B^{n-1}\EntRic(g_\eps).
\]
Hence the required entropy bound holds uniformly for the scaled
approximation, and Proposition~\ref{prop:analytic} shows that
$B\omega$ has a noncollapsed $\RCD(-1,2n)$ completion. Scaling the
metric and its Riemannian measure back, we obtain the noncollapsed
$\RCD(-B,2n)$ assertion for $\omega$. Finally, rescaling the measure
by a constant gives the same RCD assertion with $\mu=\omega^n$.
\end{proof}

\subsection{Regularization on a compact K\"ahler space}
\label{subsec:kahlerreg}

We shall use the following versions, for compact normal K\"ahler spaces, of
the regularization and convergence arguments in
\cite[Lemmas 2.4--2.5 and 6.1]{FGS}.

\begin{lemma}
\label{lem:twistregularization}
Let $X$ be a compact normal K\"ahler space with klt singularities,
with smooth K\"ahler form $\theta$ and adapted measure $\Omega$.
Suppose that
\[
 \phi \in\PSH(X,A\theta)\cap C^\infty( X_{\reg}),\qquad
 \sup_X \phi = 0,\qquad e^{-\phi}\in L^p(X,\Omega),\quad p>1,
\]
where $A>0$ is fixed. Then there exist $1<p_0<p$, a  
function $e^{-\phi'}\in L^{p_0}(X,\Omega)$, a constant $A'>0$ and functions
$\phi_j\in C^\infty(X)$ such that
\begin{equation}\label{eq:qregularization}
 \begin{gathered}
 \phi_j\le0,\qquad \ddc \phi_j\ge-A'\theta,\qquad
 \phi_j\longrightarrow \phi\quad\text{in }C^\infty_{\mathrm{loc}}( X_{\reg}),\\
 1\le e^{-\phi_j}\le e^{-\phi'},\qquad
 e^{-\phi_j}\longrightarrow e^{-\phi}\quad\text{in }L^{p_0}(X,\Omega).
 \end{gathered}
\end{equation}
We do not claim that the sequence is monotone, nor that $\phi_j\ge \phi$.
\end{lemma}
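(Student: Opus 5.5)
We will regularize $\phi$ by a regularized maximum with a function carrying a logarithmic pole along $X_{\mathrm{sing}}$, in the spirit of \cite[Lemmas 2.4--2.5]{FGS}.

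\emph{Choice of pole and barrier.} Fix a smooth function $\sigma\le -1$ on $X_{\reg}$ with $\sigma\in\PSH(X,A_1\theta)$ and $\sigma(x)\to-\infty$ as $x\to X_{\mathrm{sing}}$, with analytic singularities along $X_{\mathrm{sing}}$. Locally we take $\sigma=\log\sum_\ell|g_\ell|^2$, where the $g_\ell$ are generators of the ideal of $X_{\mathrm{sing}}$ in an ambient embedding, and glue these local pieces with a partition of unity; the gluing errors are absorbed into $A_1\theta$. Since $X$ is klt and $\pi^*\sigma\ge C\sum_i\log|s_i|^2_{h_i}-C$ on the log resolution, the function $e^{-\delta\sigma}$ lies in $L^{s}(X,\Omega)$ for some $s>1$ once $\delta>0$ is small. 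This follows from \eqref{eq:adapted}: the exponents $a_i-\delta s b_i$ remain $>-1$.

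\emph{Construction.} Choose $A$-psh smooth regularizations on $X$ with controlled loss of positivity. We use Demailly-type regularization on the compact Kähler space (via a resolution, or by the local-embedding gluing method). This gives smooth $\tilde\phi_j\downarrow\phi$ with $\ddc\tilde\phi_j\ge -A_2\theta$. Then set
\[
\phi_j=\widetilde{\max}\bigl(\tilde\phi_j,\ \delta\sigma - j\bigr)\ \text{ or rather }\ \phi_j=\widetilde{\max}\bigl(\phi,\ \delta\sigma-j\bigr)-1 .
\]
The second expression uses $\phi$ itself: $\phi$ is smooth on $X_{\reg}$ while $\delta\sigma-j\to-\infty$ at $X_{\mathrm{sing}}$. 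The regularized max therefore equals $\phi$ on $\{\delta\sigma-j<\phi-1\}$, and this set exhausts $X_{\reg}$ as $j\to\infty$. Near $X_{\mathrm{sing}}$ it equals $\delta\sigma-j$, provided $\phi$ is bounded below there; in general it is not. To handle this, we first replace $\phi$ near the singular set by the smooth $\tilde\phi_k$ with $k=k(j)$ large. Concretely, we take $\phi_j=\widetilde{\max}(\phi,\tilde\phi_{k}-j)$ on a neighborhood, so that $\phi_j=\tilde\phi_k-j$ near $X_{\mathrm{sing}}$ and $\phi_j=\phi$ on $\{\sigma>-j\}$. The cutoff between the regions is realized through $\sigma$, i.e.\ $\widetilde{\max}(\phi,\tilde\phi_k-j+\delta\sigma\cdot(\text{small}))$. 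Here $\tilde\phi_k\ge\phi$, and the $\widetilde{\max}$ of $A'\theta$-psh functions is $A'\theta$-psh with $A'=A+A_1+A_2$. Smoothness on all of $X$ holds because near $X_{\mathrm{sing}}$ only the smooth branch is active. We subtract a constant so that $\phi_j\le 0$, possible since $\sup\tilde\phi_k$ is bounded.

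\emph{Integrability.} The step we expect to be the main obstacle is the uniform domination $1\le e^{-\phi_j}\le e^{-\phi'}$. Each $\phi_j$ is at least $\max(\phi,\ \text{pole branch})-C$, so $e^{-\phi_j}\le Ce^{-\phi}$ wherever $\phi$ is the active branch. On the other branch we need $\tilde\phi_k-j$ bounded below by $\phi-C$ or by $\delta\sigma-C$. The plan is to insert the pole into the competitor, using $\widetilde{\max}(\phi,\tilde\phi_k+\delta\sigma-j)$ minus a constant. Since $\tilde\phi_k\ge\phi$, this quantity is always $\ge\phi-C$. We then set $\phi'=\phi-C$, and $e^{-\phi'}\in L^p$ since $p_0<p$; the lower bound $1\le e^{-\phi_j}$ follows from $\phi_j\le0$. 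Smoothness at $X_{\mathrm{sing}}$ then fails, because $\delta\sigma=-\infty$ there. We will cure this by replacing $\sigma$ with $\log(e^{\sigma}+\eta_j)$, where $\eta_j\to0$ is chosen so fast that the active region of the second branch is still contained in a shrinking neighborhood of $X_{\mathrm{sing}}$; on that neighborhood $\phi\le\tilde\phi_k+\delta\log\eta_j-j$ is not needed pointwise. The role of the $L^{s}$-integrability of $e^{-\delta\sigma}$ is to justify $p_0<p$ via Hölder: $e^{-\phi_j}\le e^{-\phi}+e^{-\tilde\phi_k-\delta\sigma+j}$ on the shrinking sets, which gives a common $L^{p_0}$ majorant $e^{-\phi'}:=C(e^{-\phi}+e^{-\delta\sigma})$. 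Finally, $L^{p_0}$ convergence follows from dominated convergence, since $e^{-\phi_j}=e^{-\phi}$ off sets shrinking to $X_{\mathrm{sing}}$ (which has $\Omega$-measure zero). $C^\infty_{\mathrm{loc}}(X_{\reg})$ convergence holds because $\phi_j=\phi$ on compact subsets for $j$ large.
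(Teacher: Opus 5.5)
\textbf{Gap: your $\phi_j$ is never smooth across $X_{\mathrm{sing}}$.} In every variant you write down, the pole (or the shift $-j$) sits on the smooth competitor. That competitor is therefore the smaller branch near $X_{\mathrm{sing}}$, so the regularized maximum there is just $\phi$.

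\begin{itemize}
\item In $\widetilde{\max}(\phi,\delta\sigma-j)$, the branch $\delta\sigma-j$ tends to $-\infty$ at $X_{\mathrm{sing}}$. So it is $\phi$, not $\delta\sigma-j$, that is active there; your sentence states the opposite.
\item In $\widetilde{\max}\bigl(\phi,\tilde\phi_k+\delta\log(e^\sigma+\eta_j)-j\bigr)$, the smooth branch is active only where $\tilde\phi_k-\phi>j-O(1)$. If $\phi$ is, say, continuous on $X$, Dini's theorem gives $\tilde\phi_k-\phi\to0$ uniformly. So for large $j$ the function equals $\phi$ on a neighborhood of $X_{\mathrm{sing}}$, where $\phi$ is not smooth in the ambient sense.
\end{itemize}

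The inequality you dismiss as ``not needed pointwise'' is exactly what smoothness requires. Two further problems:
\begin{itemize}
\item The bound $e^{-\tilde\phi_k-\delta\sigma+j}$ is not uniform in $j$, so it gives no common majorant. Domination is in fact trivial here, since $\widetilde{\max}(\phi,\cdot)\ge\phi$.
\item The smooth decreasing approximation $\tilde\phi_k\downarrow\phi$ is a heavy input on a singular compact K\"ahler space. Passing through a resolution does not produce functions smooth on $X$. The paper's argument avoids this input altogether.
\end{itemize}

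\textbf{Fix: put the pole on $\phi$ and truncate by a constant.} Take $\sigma\le0$ as in your first step, $\delta_j\downarrow0$ with $\delta_1$ small, and $\eta_j\downarrow0$ with $\eta_j\le1$. Set $\phi_j=\widetilde{\max}_{\eta_j}(\phi+\delta_j\sigma,\,-j)-\eta_j$. Here $\widetilde{\max}_\eta$ equals $\max$ when its arguments differ by at least $\eta$, and lies between $\max$ and $\max+\eta$. Then:
\begin{itemize}
\item Since $\phi\le0$, the first branch is $<-j-1$ on the neighborhood $\{\sigma<-(j+1)/\delta_j\}$ of $X_{\mathrm{sing}}$. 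So $\phi_j$ is constant there and hence smooth on $X$.
\item On a compact subset of $X_{\mathrm{reg}}$, $\phi_j=\phi+\delta_j\sigma-\eta_j$ for large $j$, which converges smoothly to $\phi$.
\item $\phi_j\le0$ and $\ddc\phi_j\ge-(A+\delta_1A_1)\theta$.
\item $\phi_j\ge\phi':=\phi+\delta_1\sigma-1$. By H\"older and your klt bound on $e^{-s\delta_1\sigma}$, $e^{-\phi'}\in L^{p_0}$ with $1/p_0=1/p+1/s$.
\item Dominated convergence then gives $e^{-\phi_j}\to e^{-\phi}$ in $L^{p_0}$.
\end{itemize}
This is the ``small perturbation by the pole followed by a smooth truncation'' of the paper. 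It needs no approximation from above, and it is where $p_0<p$ genuinely comes from. With your choice $\phi'=\phi-C$, the integrability of the pole plays no role.
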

\begin{proof}
We adapt the regularized maximum construction of
\cite[Lemma 2.4]{FGS}, using the logarithmic pole along
$X_{\mathrm{sing}}$ provided by \cite[Lemma 5]{DemaillyQ}.
By the klt integrability condition, a sufficiently small perturbation
by this pole admits a fixed bound in $L^{p_0}(X,\Omega)$, and a
smooth truncation then gives \eqref{eq:qregularization}.
This replaces the projective approximation from above used in
\cite{FGS}, and requires neither projectivity nor rationality.
\end{proof}

\begin{lemma}[Approximation by smooth twists]\label{lem:kahlerapprox}
Under the hypotheses of Theorem~\ref{thm:main}, there exist
constants $A\ge1$ and $p_0>1$, a nonnegative function
$H\in L^{p_0}(X,\Omega)$, smooth nonnegative  real closed $(1,1)$-forms
$\beta_j$  on $X$, and metrics $\omega_j\in[\theta]$ with
bounded potentials, smooth and K\"ahler on $ X_{\reg}$, such that
\begin{equation}\label{eq:kahlerapprox}
 \begin{gathered}
 \ric(\omega_j)=-A\omega_j+\beta_j\ge-A\omega_j,\\
 \omega_j\longrightarrow\omega
       \quad\text{in }C^\infty_{\mathrm{loc}}( X_{\reg}).
 \end{gathered}
\end{equation}
Moreover, writing $\omega_j=\theta+\ddc\varphi_j$, we have, with
constants independent of $j$,
\begin{equation}\label{eq:kahleruniform}
 \|\varphi_j\|_\infty\le C,\qquad
 \omega_j^n\le CH\Omega,\qquad
 \omega_j\ge C^{-1}\theta,\qquad
 \operatorname{diam}(\overline X_j,d_j)\le D.
\end{equation}
Here $(\overline X_j,d_j)$ denotes the intrinsic metric completion
associated with $\omega_j$. Each $\omega_j$ satisfies the adapted
density condition with the common exponent $p_0>1$ and a common bound.
\end{lemma}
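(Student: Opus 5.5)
The plan is to rewrite the Ricci lower bound for $\omega$ as a quasi-plurisubharmonicity statement for a "Ricci potential". We then regularize that potential with Lemma~\ref{lem:twistregularization} and solve twisted Monge--Amp\`ere equations with positive exponential on $X$.

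\textbf{Step 1: the Ricci potential.} Write $\omega=\theta+\ddc\varphi$ and $\omega^n=e^{-\psi}\Omega$. Fix $A\ge1$ and set $F_0=A\varphi+\psi$ on $X_{\reg}$. Then
\[
\ric(\omega)=\ric(\Omega)+\ddc\psi=-A\omega+A\theta+\ric(\Omega)+\ddc F_0 .
\]
So $\ric(\omega)\ge-\omega$ gives $\ddc F_0\ge(A-1)\omega-A\theta-\ric(\Omega)\ge-C\theta$ on $X_{\reg}$.

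Since $F_0$ is bounded above near $X_{\rm sing}$ ($\varphi$ is bounded and $e^{-\psi}\in L^p$ forces only upper control), it extends across the singular set of codimension $\ge2$ as a $C\theta$-psh function on the normal space $X$. We normalize $\sup F_0=0$ by adding a constant.

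Moreover $e^{-F_0}=e^{-A\varphi}e^{-\psi}\in L^p(X,\Omega)$, because $\varphi$ is bounded. Then $\omega$ solves
\[
\omega^n=e^{A\varphi-F_0+c_0}\Omega ,
\]
and by the uniqueness theorem for equations with positive exponential \cite[Theorem 4.1]{EGZ} it is the unique bounded solution of that equation.

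\textbf{Step 2: regularize and solve.} Apply Lemma~\ref{lem:twistregularization} to $F_0$ in place of $\phi$, with the constant $C$ in place of $A$. This gives smooth $F_j\le0$ on $X$, a constant $A'$, an exponent $p_0>1$, and a function $e^{-F'}\in L^{p_0}$, with the following properties:
\begin{itemize}
\item $\ddc F_j\ge-A'\theta$;
\item $F_j\to F_0$ in $C^\infty_{\rm loc}(X_{\reg})$;
\item $e^{-F_j}\le e^{-F'}$ and $e^{-F_j}\to e^{-F_0}$ in $L^{p_0}$.
\end{itemize}
Enlarge $A$ so that $A\theta+\ric(\Omega)-A'\theta\ge0$; since $\ric(\Omega)$ is smooth this is possible. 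Then solve
\[
(\theta+\ddc\varphi_j)^n=e^{A\varphi_j-F_j+c_0}\Omega
\]
with bounded $\varphi_j$, using \cite{EGZ} on the klt space, which handles the positive-exponential case.

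Smoothness of $\varphi_j$ on $X_{\reg}$ follows from the resolution approximation of Section~\ref{sec:approximation}, now with the smooth function $F_j$ in place of $\pi^*F$. By construction,
\[
\ric(\omega_j)=-A\omega_j+\beta_j,\qquad \beta_j:=A\theta+\ric(\Omega)+\ddc F_j\ge0 .
\]
This $\beta_j$ is smooth with smooth local potentials.

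\textbf{Step 3: uniform bounds and convergence.} All the estimates below come from the common dominating density $H:=Ce^{-F'}\in L^{p_0}$.
\begin{itemize}
\item \emph{Potentials.} The densities $e^{A\varphi_j-F_j}$ are bounded by $e^{A\sup\varphi_j}e^{-F'}$. Kolodziej's $L^\infty$ estimate in the form of \cite{EGZ,DP} then gives $\|\varphi_j\|_\infty\le C$. The upper bound on $\sup\varphi_j$ comes from the volume normalization.
\item \emph{Density.} It follows that $\omega_j^n\le CH\Omega$, so each $\omega_j$ satisfies the adapted density condition with the common exponent $p_0$.
\item \emph{Diameter.} The uniform bound $\operatorname{diam}(\overline X_j,d_j)\le D$ then follows from Theorem~\ref{thm:background}, i.e.\ \cite{GPSS1}.
\item \emph{Uniform convergence.} By Kolodziej's stability theorem, the $L^{p_0}$ convergence of the right-hand sides gives uniform convergence of $\varphi_j$ along subsequences. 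Any limit solves the equation of Step~1, so by uniqueness it equals $\varphi$, and the whole sequence converges.
\item \emph{Local smooth convergence.} On compact subsets of $X_{\reg}$, local $C^2$ bounds come from the Chern--Lu and Aubin--Yau Laplacian estimates, using the uniform bound $\ddc F_j\ge-A'\theta$. Evans--Krylov and Schauder estimates then upgrade this to $C^\infty_{\rm loc}$ convergence.
\end{itemize}

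\textbf{Step 4: the lower bound $\omega_j\ge C^{-1}\theta$ (main obstacle).} This is the step I expect to be hardest. I would apply the Chern--Lu inequality to $\log\tr_{\omega_j}\theta$, lifted to the resolution $Y$. The inputs are the lower bound $\ric(\omega_j)\ge-A\omega_j$ and an upper bound for the bisectional curvature of $\vartheta=\pi^*\theta$, which holds because $\theta$ is smooth via local ambient embeddings.

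The quantity to study is
\[
Q=\log\tr_{\omega_j}\vartheta-B\varphi_j ,
\]
and Chern--Lu gives $\Delta_{\omega_j}Q\ge-C+\tr_{\omega_j}\vartheta$ for suitable $B$. The difficulty is to run the maximum principle although $\omega_j$ is not known to be smooth across $E$. I would first try to do this at the level of the $\varepsilon$-approximations $\omega_{j,\varepsilon}$ on $Y$. Their Ricci lower bound fails near $E$ only through the spike terms $P_{i,\varepsilon}$ of \eqref{eq:spike} coming from positive discrepancies. For this reason I would alternatively perturb $Q$ by $\delta\chi$, with $\chi$ the barrier from \eqref{eq:barrier}, which pushes the maximum off $E$, and then let $\delta\to0$. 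Since $\vartheta$ is only semipositive, one obtains a bound on $\tr_{\omega_j}\vartheta$, which is exactly the statement $\omega_j\ge C^{-1}\theta$ on $X_{\reg}$.
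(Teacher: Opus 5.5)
There is a genuine gap, and it sits exactly where the paper relies on \cite[Theorem 1.2]{CCHSTT}.

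\textbf{The sign of the twist in Step~2 rests on a circular choice of constants.} The constant $A'$ comes from regularizing $F_0=A\varphi+\psi$. The only smooth lower bound you feed into Lemma~\ref{lem:twistregularization} is $\ddc F_0\ge-A\theta-\ric(\Omega)$: the term $(A-1)\omega$ is discarded, and that lemma gives no quantitative relation between $A'$ and the class of its input. So the input class grows with $A$ at the same rate, and the requirement $A\theta+\ric(\Omega)-A'\theta\ge0$ cannot be arranged by enlarging $A$.

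What you actually obtain is $\beta_j\ge-C_0\theta$, i.e.\ $\ric(\omega_j)\ge-A\omega_j-C_0\theta$. This is the same information the paper has after regularizing $q=\varphi+\psi$, namely $\ric(\omega_j)\ge-\omega_j-C_0\theta$. In the paper, $\ric(\omega_j)\ge-A\omega_j$ is derived only \emph{after} the uniform bound $\omega_j\ge C^{-1}\theta$, which turns $-C_0\theta$ into $-C_0C\,\omega_j$. Your device could be rescued only if you already had two things. First, $\omega\ge c\theta$ for the target, so that $(A-1)\omega$ contributes $(A-1)c\,\theta$. Second, a regularization with quantified loss. But that lower bound is again the estimate of \cite{CCHSTT}. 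Either way, everything hinges on your Step~4.

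\textbf{Step~4 does not establish $\omega_j\ge C^{-1}\theta$.} Chern--Lu itself tolerates the twisted bound $\ric(\omega_j)\ge-A\omega_j-C_0\theta$; the problem is the maximum principle, and neither of your devices closes it.

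\emph{On the $\eps$-approximations on $Y$.} The Ricci term in Chern--Lu picks up the contribution of $-a_iP_{i,\eps}$ from \eqref{eq:riccispike}. Now $P_{i,\eps}$ has size $\eps^{-1}$ relative to $\theta_Y$ where $|s_i|_{h_i}^2\lesssim\eps$, so this term cannot be absorbed uniformly in $\eps$.

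\emph{On $X_{\reg}$.} The barrier $\delta\chi$ pushes the maximum of $Q+\delta\chi$ off $E$ only if $\delta b_i$ exceeds the a priori pole order of $\tr_{\omega_j}\vartheta$. That order is only known through the bound $C\prod_i|s_i|_{h_i}^{-2N}$, as in Lemma~\ref{lem:inverse}. For such $\delta$ the maximum principle gives $\tr_{\omega_j}\vartheta\le Ce^{-\delta\chi}$, which still has poles along $E$, and letting $\delta\to0$ is not available.

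A weak maximum principle across the zero-capacity singular set, as in Lemma~\ref{lem:bootstrap}, would instead require a global energy bound for $\log\tr_{\omega_j}\vartheta$, which you do not have. The paper does not do this by hand. It invokes \cite[Theorem 1.2]{CCHSTT}, checking its integrability hypothesis via Lemma~\ref{lem:density} applied with respect to $\theta^n$. Without that theorem or a genuine substitute, neither $\omega_j\ge C^{-1}\theta$ nor $\ric(\omega_j)\ge-A\omega_j$ is proved.

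A smaller point concerns Step~1. Your reason for $F_0$ being bounded above near $X_{\mathrm{sing}}$ is not valid, since $e^{-\psi}\in L^p$ gives no upper bound for $\psi$. The extension of $F_0$ should instead come from extending quasi-plurisubharmonic functions across analytic sets of codimension at least two on normal spaces, as in \cite[proof of Proposition 3.1]{CCHSTT}.
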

\begin{proof}
We apply Lemma~\ref{lem:twistregularization} to the normalized
quasi-plurisubharmonic function $q=\varphi+\psi$, whose extension to
$X$ is obtained as in \cite[proof of Proposition 3.1]{CCHSTT}.
We then solve the Monge--Amp\`ere equations with positive exponential
in \cite[equations (2.3)--(2.4)]{FGS} with these smooth twists.
The proofs of \cite[Lemmas 2.5 and 6.1]{FGS} carry over, using
the estimates for bounded potentials \cite{EGZ,DP}, the diameter
estimate of Section~\ref{sec:prelim}, and
\cite[Theorem 1.2]{CCHSTT} for the uniform lower bound
$\omega_j\ge c\theta$.
The uniform integrability hypothesis of the latter theorem is
verified by the density reduction of Lemma~\ref{lem:density}, applied
also with respect to $\theta^n$. The fixed bound gives
\eqref{eq:kahleruniform}, and the lower bound for the metrics converts
$\ric(\omega_j)\ge-\omega_j-C_0\theta$ into
\eqref{eq:kahlerapprox}.
\end{proof}

\subsection{Spectral approximations}
\label{subsec:singularspectral}

\begin{lemma} 
\label{lem:singularspectral}
Let $\omega_j$ be the metrics of
Lemma~\ref{lem:kahlerapprox}, and suppose that their metric
completions, equipped with the Riemannian measures, are
$\RCD(-A,2n)$ spaces. Then every Friedrichs eigenfunction of the
target metric $\omega$ is Lipschitz with respect to its intrinsic
distance, and extends to a Lipschitz function on its metric
completion.
\end{lemma}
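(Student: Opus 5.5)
The plan is to transfer uniform Lipschitz bounds for eigenfunctions from the approximating RCD spaces $(\overline X_j,d_j,\omega_j^n)$ to the target, by means of spectral convergence as in Lemma~\ref{lem:spectral}. The key quantitative input on the RCD side is the standard gradient estimate for eigenfunctions on compact $\RCD(K,N)$ spaces. Since each completion is $\RCD(-A,2n)$ with diameter at most $D$ by \eqref{eq:kahleruniform}, an $L^2$-normalized eigenfunction with eigenvalue $\lambda$ satisfies
\[
 \|f\|_\infty+\|\nabla f\|_\infty\le C(n,A,D,\lambda,V),
\]
with $V$ a lower volume bound. One obtains this from the Bakry--\'Emery estimate $|\nabla P_tf|^2\le e^{2At}P_t|\nabla f|^2$ together with the $L^\infty$ bound on the heat kernel, or equivalently from Cheng--Yau type gradient estimates valid in RCD spaces. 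Noncollapsing and volume comparison make all constants uniform in $j$. On $X_{\reg}$, where $\omega_j$ is smooth, this says $|\nabla_{g_j}f_{k,j}|\le C_k$ pointwise.

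First, I will establish spectral convergence for the family $\omega_j\to\omega$. The proof follows Lemma~\ref{lem:spectral} verbatim, with $X_{\reg}$ in place of $Y\setminus E$. The upper bound $\limsup_j\lambda_{k,j}\le\lambda_k$ comes from min--max using test functions in $C_c^\infty(X_{\reg})$, which are dense in $W^{1,2}(X)$ by \eqref{eq:domain}, together with local smooth convergence. A uniform $L^\infty$ bound on eigenfunctions follows from the RCD heat kernel bound, or from the Sobolev inequality, which holds uniformly because $\omega_j^n\le CH\Omega$ with $H\in L^{p_0}$.

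To show that no mass is lost near $X_{\mathrm{sing}}$, I will use uniform integrability: $\vol_{\omega_j}(U)\le C\|H\|_{L^{p_0}}\Omega(U)^{1-1/p_0}$, which tends to $0$ as $U$ shrinks to $X_{\mathrm{sing}}$. Local elliptic estimates, Fatou's lemma and Lemma~\ref{lem:removal} then show that subsequential limits $f_k$ are orthonormal Friedrichs eigenfunctions of $\omega$. The second min--max argument identifies $\ell_k=\lambda_k$ and shows that the limits span each eigenspace.

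Next, I will pass the gradient bound to the limit. On compact subsets of $X_{\reg}$ the convergence $f_{k,j}\to f_k$ is smooth and $g_j\to g$ smoothly, so $|\nabla_g f_k|\le C_k$ on all of $X_{\reg}$. Since the distance of $\omega$ is the intrinsic length distance of $(X_{\reg},g)$, integrating along curves in $X_{\reg}$ shows that $f_k$ is $C_k$-Lipschitz, and hence it extends to $\overline X$. Because each eigenspace is finite dimensional and spanned by recovered eigenfunctions, every eigenfunction is Lipschitz.

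I expect the main obstacle to be the uniformity of the RCD gradient estimate: it needs the constants to depend only on $A$, $n$, $D$, the eigenvalue bound and a volume lower bound, all uniform in $j$. Volumes are fixed by cohomology and diameters by \eqref{eq:kahleruniform}, so the plan is to invoke the standard RCD Li--Yau/Bakry--\'Emery gradient bound for eigenfunctions at this step. A secondary concern is that the convergence is only local and smooth on $X_{\reg}$, not Gromov--Hausdorff. This is harmless, because the Lipschitz conclusion is obtained pointwise on $X_{\reg}$ and only the intrinsic length structure of the target is used.
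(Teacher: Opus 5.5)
Your proposal is correct and follows essentially the same route as the paper. You obtain uniform gradient bounds for the approximating eigenfunctions from the Bakry--\'Emery estimate on the $\RCD(-A,2n)$ completions together with a uniform heat kernel bound, and you get spectral convergence from the min--max argument of Lemma~\ref{lem:spectral} on $X_{\reg}$, where the domination $\omega_j^n\le CH\Omega$ from \eqref{eq:kahleruniform} prevents loss of $L^2$ mass; you then pass the pointwise gradient bounds to the locally smooth limit, spelling out along the way the uniformity of constants (diameter bound, fixed volume, Bishop--Gromov) that the paper's terse proof leaves implicit.
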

\begin{proof}
We use the heat kernel and form domain results in
\cite[Theorem 2.2, Lemma 8.2 and Proposition 8.1]{GPSS2}, together
with the gradient estimate for the heat semigroup on RCD spaces
\cite{AGS}. The min--max argument of Lemma~\ref{lem:spectral} applies
on the common smooth locus, since the domination in
\eqref{eq:kahleruniform} prevents any loss of $L^2$ mass at the
singular set. This argument recovers a complete eigenbasis for the
target, and the uniform gradient bounds pass to the locally smooth
limit.
\end{proof}

\subsection{The Ricci lower bound}

\begin{proof}[Proof of Theorem~\ref{thm:main}]
By Lemma~\ref{lem:kahlerapprox} and
Proposition~\ref{prop:smoothtwist}, the metric completions of the
approximating metrics are noncollapsed $\RCD(-A,2n)$ spaces.
It then follows from Lemma~\ref{lem:singularspectral} that every
eigenfunction of the target is Lipschitz. Honda's criterion, in the
form of Proposition~\ref{thm:criterion} (cf.\ \cite[Lemma 3.3]{FGS}),
now applies with the original bound $\ric(\omega)\ge-\omega$,
and gives the noncollapsed $\RCD(-1,2n)$ conclusion for
$\omega^n$. The assertion for $\mu=\omega^n$ follows by rescaling
the measure by a constant.
\end{proof}

We note that only the regularization argument of
\cite[Section 6]{FGS} is used here, with its resolution
assumption~(5.1) replaced by Proposition~\ref{prop:smoothtwist}.
In particular, no prior topological identification of the metric
completions is needed.

\subsection{Local holomorphic geometry and singular sets}
\label{subsec:local-singularities}

In this subsection, we adapt the local holomorphic arguments of
\cite[Section 4]{Szekelyhidi} and \cite[Sections 5--6]{FGS}.
Let $Z=\overline X$, $M= X_{\reg}$, and $\Gamma=Z\setminus M$.
By Lemma~\ref{lem:kahlerapprox} and
\cite[Theorem 1.2]{CCHSTT}, we have $\omega\ge c\theta$ on $M$.
Therefore the identity map extends to a Lipschitz surjection
$\iota:Z\to X$ with $\iota^{-1}( X_{\reg})=M$.
We choose Stein neighborhoods $U_0\Subset U_1\Subset U_2$ on which
$\omega=\ddc\phi_U$ with $\phi_U$ bounded, and set
$\widehat U_a=\iota^{-1}(U_a)$.
The trivial bundle $L_U=U_2\times\mathbb C$, equipped with the metric
$h_U=e^{-\phi_U}$, provides a local polarization for any
real K\"ahler class; compare \cite[the remark after Lemma 5.15]{FGS}
and \cite[Appendix A]{FGSW}.

\begin{lemma} \label{lem:local-sections}
The weighted $L^2$ correction, as well as the interior estimates for
sections and their gradients used in the construction of Gaussian
sections, hold for $L_U^k$ for all sufficiently large integers $k$.
When the norms are measured with respect to $h_U^k$ and $k\omega$,
the constants on $\widehat U_0$ are independent of $k$.
Moreover, the corrected holomorphic sections extend over $U_1$.
\end{lemma}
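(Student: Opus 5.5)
The plan is to run Hörmander's $L^2$ method on the Stein neighborhood $U_2$, using the complete Kähler metric that $U_2$ carries away from its singular set. The singular set of $U_2$ is an analytic subset of codimension at least two, so $U_2\cap X_{\reg}$ is a weakly pseudoconvex Kähler manifold. We work on $U_2\cap X_{\reg}$ with the weight $k\phi_U$ and with the metric $k\omega$. The curvature of $(L_U^k,h_U^k)$ is $k\omega$. Since $\ric(\omega)\ge-\omega$ on $M$, we get
\[
 \ric(k\omega)+k\omega\ge\tfrac{k-1}{k}\,k\omega\ge\tfrac12\,k\omega
\]
for $k\ge2$. For a $(0,1)$-form $\zeta$ with $\bar\partial\zeta=0$ and $\int|\zeta|^2_{h_U^k,k\omega}(k\omega)^n<\infty$, Hörmander's estimate (in the Demailly form, for complete metrics on weakly pseudoconvex manifolds, applied to $K_M\otimes L_U^k\otimes K_M^{-1}$ with the Ricci term absorbed) gives a solution of $\bar\partial v=\zeta$ with
\[
 \int|v|^2\le 2\int|\zeta|^2 .
\]
The constant does not depend on $k$, since all norms are rescaled by $k$. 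Completeness is obtained by adding $\delta$ times a complete Kähler metric on $U_2\cap X_{\reg}$, built from a strictly psh exhaustion of $U_2$ and $-\log$ of the distance to $X_{\mathrm{sing}}$. We then let $\delta\to0$ using the uniform bound, and $L^2$ limits stay holomorphic. This gives the weighted $L^2$ correction.

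For the interior estimates on $\widehat U_0$, we use that each $k\omega$-ball of radius $O(1)$ in the region of interest has Ricci curvature $\ge -1/k\ge-1$ on its regular part. By Theorem~\ref{thm:main}, the completion is a noncollapsed $\RCD(-1,2n)$ space, hence so is its rescaling. For a holomorphic section $s$, the function $|s|^2_{h_U^k}$ satisfies
\[
 \Delta_{k\omega}|s|^2\ge -n|s|^2
\]
on the regular part, since the curvature term is $-n$ after rescaling. By Lemma~\ref{lem:removal} (the cutoffs \eqref{eq:cutoffs} have vanishing capacity, and this is scale invariant), the inequality holds weakly on the completion. Moser iteration with the scale-invariant local Sobolev/Poincaré inequality and volume doubling of RCD spaces then gives
\[
 \sup_{B(x,1)}|s|\le C\|s\|_{L^2(B(x,2))},
\]
with $C=C(n)$ independent of $k$. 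For the gradient, we apply the Bochner formula to $|\nabla s|^2$, which satisfies $\Delta|\nabla s|^2\ge-C|\nabla s|^2-C|s|^2$ with $k$-independent constants because $\ric\ge-1/k$. The Caccioppoli inequality and the same Moser argument, again extended across $\Gamma$ by Lemma~\ref{lem:removal} once $|\nabla s|\in L^2$ is known from Caccioppoli, give the gradient bound. The main obstacle I expect is exactly this step: justifying Moser iteration across the singular set $\Gamma$, uniformly in $k$. My plan is to use the RCD structure together with the capacity-zero cutoffs, rather than any smooth approximation.

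Finally, for the extension over $U_1$, the corrected sections $\sigma=\chi s_0-v$ are holomorphic on $U_2\cap X_{\reg}$. Their norms with respect to $h_U^k$ are locally bounded, by the interior estimate and the boundedness of $\phi_U$. Hence they are locally bounded holomorphic functions on $U_1\cap X_{\reg}$, which is the complement of a codimension-two analytic set in the normal space $U_1$. By the Riemann extension theorem for normal spaces, they extend holomorphically to $U_1$.
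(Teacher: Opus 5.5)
Your route is the paper's: H\"ormander's estimate on the regular part with positivity $k\omega+\ric(\omega)\ge(k-1)\omega$, Bochner and Moser arguments pushed across $\Gamma$ with the zero-capacity cutoffs \eqref{eq:cutoffs}, and normality for the extension. The $\bar\partial$-step is fine as you set it up. The only slip there is that $U_2\cap X_{\reg}$ need not be weakly pseudoconvex; this is harmless, because you only use that it carries a complete K\"ahler metric, and the complement of an analytic set in a Stein space does. You also correctly identify the passage across $\Gamma$ as the main difficulty, but the mechanism you propose for it does not work as stated.

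Lemma~\ref{lem:removal} applies to $u$ only if $u\in W^{1,2}$ and $F\in L^2$ up to $\Gamma$. For $u=|\nabla s|^2$, knowing $|\nabla s|\in L^2$ is not enough: you need $|\nabla s|\in L^4$ and $|\nabla s|\,|\nabla\nabla s|\in L^2$ near $\Gamma$. If you test the Bochner formula against $\eta^2\chi_j^2$ to obtain these, the error term $\int\eta^2|\nabla\chi_j|^2|\nabla s|^2$ is removed by $\|\nabla\chi_j\|_2\to0$ only if $|\nabla s|$ is already locally bounded near $\Gamma$, which is what you want to prove. The same problem occurs for $u=|s|^2$, and there your order is circular: you get local boundedness, and hence the extension over $U_1$, from the interior estimate, which itself needed that boundedness.

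The missing input is an a priori local bound near $\Gamma$. It may depend on $k$, since it is only used to make the cutoff error terms vanish. For $s$ this is free if you reverse the order. Since $X$ is normal and $\operatorname{codim}X_{\mathrm{sing}}\ge2$, every holomorphic function on $U_2\cap X_{\reg}$ extends holomorphically to $U_2$ with no boundedness hypothesis. This gives both the extension over $U_1$ and the local boundedness of $|s|_{h_U^k}$, and Caccioppoli then puts $|s|^2$ in $W^{1,2}_{\mathrm{loc}}$ across $\Gamma$.

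For the gradient, write $|\nabla s|_{h_U^k,k\omega}=e^{-k\phi_U/2}|\partial s-ks\,\partial\phi_U|_{k\omega}$. The term $|\partial s|_\omega\le C|\partial s|_\theta$ is locally bounded by the holomorphic extension and the lower bound $\omega\ge c\theta$, the same bound the paper combines with normality. The term $|\partial\phi_U|_\omega$ is locally bounded because the bounded function $\phi_U$ solves $\Delta_\omega\phi_U=n$. By the cutoffs again, it is then a weak solution on the $\RCD$ space, where solutions of the Poisson equation with bounded data are locally Lipschitz. With these a priori bounds, your Bochner--Moser argument across $\Gamma$ goes through with constants independent of $k$.
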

\begin{proof}
We use the local versions of \cite[Lemmas 4.1--4.3]{FGS}, together
with H\"ormander's estimate as in \cite[Theorem 18]{Szekelyhidi}.
The required positivity is given by
$k\omega+\ric(\omega)\ge(k-1)\omega$.
The cutoff functions of \cite[Lemma 8.2 and Proposition 8.1]{GPSS2}
justify the Bochner and Moser arguments across $\Gamma$.
Finally, the lower bound for the metric, together with normality,
gives the extension of the corrected holomorphic sections, as in
\cite[Lemma 5.9]{FGS} and \cite[Appendix A]{FGSW}.
\end{proof}

We use the notation
\begin{equation}\label{eq:local-volume-density}
 \nu_Z(z)=\lim_{r\downarrow0}
 \frac{\mathcal H^{2n}(B(z,r))}{\alpha_{2n}r^{2n}},
 \qquad \mathcal R_\epsilon(Z)=\{\nu_Z>1-\epsilon\},
\end{equation}
where $\alpha_{2n}$ denotes the volume of the Euclidean unit ball.

\begin{lemma} \label{lem:local-density-gap}
There exists $\epsilon_X>0$ such that
\[
 \nu_Z(z)\le1-\epsilon_X\quad(z\in\Gamma),
 \qquad \mathcal R_{\epsilon_X}(Z)=\mathcal R(Z)=M.
\]
In particular, the metric singular set of $Z$ is closed.
\end{lemma}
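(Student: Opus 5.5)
We argue by contradiction, following the density gap argument of \cite[Proposition 12]{Szekelyhidi} and \cite[Section 5]{FGS}, with the global polarization replaced by the local polarization $(L_U,h_U)$ of Lemma~\ref{lem:local-sections}.

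First, the inclusion $M\subset\mathcal R(Z)$ and $\nu_Z=1$ on $M$ are immediate, since $\omega$ is smooth on $M$. The whole content is therefore the uniform gap on $\Gamma$. Since $X$ is compact, finitely many triples $U_0\Subset U_1\Subset U_2$ cover $X$, so it suffices to produce a gap for each $U$.

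Suppose there are points $z_i\in\Gamma\cap\widehat U_0$ with $\nu_Z(z_i)\to1$. By volume convergence for noncollapsed $\RCD(-1,2n)$ spaces (Theorem~\ref{thm:main}) and Bishop--Gromov, we can choose scales $r_i\downarrow0$ such that the rescaled balls $(Z,r_i^{-2}d,z_i)$ have volume ratio at scale $1$ arbitrarily close to Euclidean. By Cheeger--Colding type $\epsilon$-regularity for noncollapsed RCD spaces, they are then $\delta_i$-Gromov--Hausdorff close to $B_{\mathbb C^n}(0,1)$ on a fixed multiple of the unit ball. On the rescaled smooth part, with $k_i=r_i^{-2}$, we build Gaussian-type holomorphic sections of $L_U^{k_i}$ from the Euclidean model. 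Concretely, we take the functions $e^{-|w|^2/2}$ and $w_a e^{-|w|^2/2}$ transplanted through almost-isometries, cut off away from a small neighborhood of $\Gamma$. That neighborhood has small capacity by the cutoff functions \eqref{eq:cutoffs}. We then correct the $\bar\partial$-error by the weighted $L^2$ estimate of Lemma~\ref{lem:local-sections}. The interior gradient estimates with $k$-independent constants show that the $n$ corrected sections $s_1,\dots,s_n$ (divided by the peak section $s_0$) give a holomorphic map $F_i$ on $\widehat U_1$. This map is a $\Psi(\delta_i)$-GH approximation to the Euclidean ball, and it extends across $\Gamma$ by normality, as in Lemma~\ref{lem:local-sections}.

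Next, we show that $F_i$ is a local biholomorphism at $z_i$, which contradicts $z_i\in\Gamma$. Since $\omega\ge c\theta$ and $\iota$ is Lipschitz, the map $F_i\circ\iota^{-1}$ is a holomorphic map from a neighborhood of $\iota(z_i)$ in the normal space $X$ to $\mathbb C^n$. Because $F_i$ is almost an isometry, its fibers are finite, so $F_i$ is a finite branched cover onto its image. Its degree is computed by comparing volumes: the volume of $F_i^{-1}(B(0,1/2))$ is close to $\alpha_{2n}2^{-2n}$, so the degree is $1$. A finite degree-one holomorphic map onto a smooth domain from a normal space is a biholomorphism by Zariski's main theorem. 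This forces $\iota(z_i)\in X_{\reg}$, hence $z_i\in M$, a contradiction.

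The main obstacle is this last step. One must rule out that $F_i$ has degree one but $\iota(z_i)$ is still singular, and control the behaviour of $F_i$ on $\Gamma$ where no smooth convergence is available. I would handle this via the $L^2$ gradient bounds of Lemma~\ref{lem:local-sections}, showing that $\det dF_i$ is bounded below in $L^2$ on $M$. Alternatively, one can use the holomorphic charts of \cite{LiuSzekelyhidi}: at the almost-Euclidean scale they give holomorphic coordinates on the metric ball, and comparing these with the ambient embedding of $U_1$ identifies the germ $(X,\iota(z_i))$ with $(\mathbb C^n,0)$. Once the gap is established, $\mathcal R_{\epsilon_X}(Z)=\mathcal R(Z)=M$ follows, because regular points have $\nu_Z=1$. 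Closedness of the singular set then follows from the lower semicontinuity of $\nu_Z$ on noncollapsed RCD spaces: $\{\nu_Z\le1-\epsilon_X\}$ is closed.
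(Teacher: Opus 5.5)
The bookkeeping in your proposal is fine: $M\subset\mathcal R(Z)$, the identification $\mathcal R_{\epsilon_X}(Z)=\mathcal R(Z)=M$ once a gap is known, and closedness from lower semicontinuity of $\nu_Z$. The step that carries the whole lemma, ``degree one by comparing volumes'', does not follow. Let $O$ be the component of $F_i^{-1}(W)$ through $\iota(z_i)$, for a small ball $W\subset\mathbb C^n$. The degree of the finite map $F_i|_O$ is given by
\[
 \deg(F_i|_O)\,\vol(W)=\int_{O\cap X_{\reg}}J_{F_i}\,dV_\omega,
 \qquad J_{F_i}=|\det\partial F_i|_\omega^2 .
\]
Knowing that $\vol_\omega(O)$ is close to $\vol(W)$ therefore gives nothing unless $J_{F_i}$ is at most $1+\Psi$ on average. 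A Gromov--Hausdorff approximation carries no Jacobian information. The $k$-independent gradient bound of Lemma~\ref{lem:local-sections} only gives $J_{F_i}\le C$, hence $\deg(F_i|_O)\le C(1+\Psi)$, not $1$. Your proposed repair, an $L^2$ lower bound for $\det dF_i$, points the wrong way: by the same formula it bounds the degree from below.

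What you would actually need is an estimate such as $\vol(B)^{-1}\int_B|J_{F_i}-1|\,dV_\omega\le\Psi$ on the rescaled ball $B=B(z_i,r_i)$. This could come, for instance, from $W^{1,2}$-closeness of $F_i$ to a $J$-compatible harmonic splitting map together with the uniform gradient bound. That is a genuine extra estimate on the singular space, and it is not in your outline. Two smaller issues sit in the same step. First, $\iota$ is not yet known to be injective, so $F_i\circ\iota^{-1}$ is undefined. Before comparing volumes you must check that $O\cap X_{\reg}$ stays inside the metric ball about $z_i$, for example via connectedness of the regular part of the normal germ at $\iota(z_i)$. Second, with only a Ricci lower bound there are no smooth almost-isometries through which to transplant Gaussians. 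The chart must instead come from the splitting-map construction of \cite{LiuSzekelyhidi}.

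Your fallback, that the Liu--Sz\'ekelyhidi charts identify the germ $(X,\iota(z_i))$ with $(\mathbb C^n,0)$, is circular at a point of $\Gamma$. A chart built on $Z$ at a fixed almost-Euclidean scale is only a Gromov--Hausdorff approximation, and that it is biholomorphic near a point of $\Gamma$ is exactly what must be shown. The injectivity statements in \cite{LiuSzekelyhidi} concern smooth K\"ahler manifolds and their Ricci limits, and $Z$ is not known to be such a limit. The resolution approximations have no uniform Ricci lower bound, and those of Lemma~\ref{lem:kahlerapprox} are themselves singular.

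The paper avoids any degree count. It localizes Sz\'ekelyhidi's finite-order and three-annulus argument \cite[Lemmas 20--22, Propositions 23--24]{Szekelyhidi}. The klt integrability and the $L^p$ density bound give a finite vanishing order for local holomorphic functions vanishing on $X_{\mathrm{sing}}$. The Liu--Sz\'ekelyhidi charts, with the weighted corrections of Lemma~\ref{lem:local-sections}, replace the K\"ahler--Einstein convergence step. A finite cover of $X$ then produces the constant $\epsilon_X$, which is why the gap depends on $X$ rather than only on $n$.
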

\begin{proof}
We localize the finite order and three-annulus arguments of
\cite[Lemmas 20--22 and Propositions 23--24]{Szekelyhidi}.
The klt integrability condition and the $L^p$ bound for the density
provide the finite order estimate for local holomorphic
functions vanishing on $X_{\mathrm{sing}}$.
In the proof of Proposition~23 there, we replace the convergence step
for K\"ahler--Einstein metrics by the holomorphic charts of
\cite[Theorem 1.4 and Propositions 2.4, 3.2]{LiuSzekelyhidi} under
Ricci lower bounds, using Lemma~\ref{lem:local-sections} for the
weighted corrections. A finite covering of $X$ then gives the stated
density gap on the fixed space.
\end{proof}

\begin{lemma}[Local cones and cutoffs]\label{lem:local-cones}
The following statements hold for the fixed space $Z$.
\begin{enumerate}[label=(\roman*)]
\item No blow-up sequence centered in $\Gamma$ can converge
to a cone $\mathbb R^{2n-2}\times C(S^1_\gamma)$ with
$0<\gamma<2\pi$.
\item For some $\epsilon>0$, the set
$V\setminus\mathcal R_\epsilon(V)$ has locally zero
$W^{1,2}$ capacity for every tangent cone $V$ of $Z$.
\item Gaussian sections and point-separating sections can
be constructed in the local bundles $L_U^k$ at every
point of $\widehat U_0$.
\end{enumerate}
\end{lemma}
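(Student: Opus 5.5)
We prove the three items in the order (i), (ii), (iii), since each uses the previous one. Throughout we work with the fixed compact noncollapsed $\RCD(-1,2n)$ space $Z$ from Theorem~\ref{thm:main}. We also use the density gap of Lemma~\ref{lem:local-density-gap} and the local polarizations $(L_U,h_U)$ of Lemma~\ref{lem:local-sections}. The common mechanism is the one in \cite[Section 4]{Szekelyhidi} and \cite[Sections 5--6]{FGS}. Along a blow-up sequence $(Z,k\,d,z_k)$ with $z_k\in\Gamma$, the rescaled local bundles $L_U^k$ carry the holomorphic data. Holomorphic charts from \cite{LiuSzekelyhidi}, available under the Ricci lower bound, then transfer information between the limit cone and the fixed complex space $X$.

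For (i), we argue by contradiction. Suppose a blow-up at points $z_k\in\Gamma$ converges to $V=\mathbb R^{2n-2}\times C(S^1_\gamma)$ with $0<\gamma<2\pi$. On the regular part of $V$, we build Gaussian-type holomorphic sections of $L_U^k$ using Lemma~\ref{lem:local-sections}, and pass them back to $Z$ by H\"ormander correction. This produces $n$ holomorphic functions near $\iota(z_k)$ which form an approximate cone chart. Because the cone angle is $\gamma$, one coordinate behaves like $w^{2\pi/\gamma}$, giving a holomorphic map of degree one onto a neighborhood in $\mathbb C^n$. Normality of $X$ and the extension statement of Lemma~\ref{lem:local-sections} then show that this map is a local biholomorphism at $\iota(z_k)$. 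That would force $\iota(z_k)\in X_{\reg}$, contradicting $\iota^{-1}(X_{\reg})=M$. Equivalently, in density terms: the cone has density $\gamma/2\pi$, which we show must equal $1$. This step is the main obstacle. Degree one must be deduced uniformly from $L^2$ estimates with constants independent of $k$, and the finite-order estimate used in the proof of Lemma~\ref{lem:local-density-gap} has to be re-run here.

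For (ii), take any tangent cone $V$ of $Z$, with $\epsilon\le\epsilon_X$. By the Cheeger--Colding and De~Philippis--Gigli stratification for noncollapsed RCD spaces, $V\setminus\mathcal R_\epsilon(V)$ is contained in the top stratum $\mathcal S^{2n-2}$ together with a set of Hausdorff dimension at most $2n-3$. A point of $\mathcal S^{2n-2}\setminus\mathcal S^{2n-3}$ in the $\epsilon$-singular set has a tangent cone $\mathbb R^{2n-2}\times C(S^1_\gamma)$ with $\gamma<2\pi$. A diagonal argument realizes such a cone as a blow-up limit of $Z$ centered at points of $\Gamma$, using the closedness of $\Gamma$ from Lemma~\ref{lem:local-density-gap}. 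Item (i) excludes this. Hence $V\setminus\mathcal R_\epsilon(V)$ has codimension at least three. Sets of Hausdorff codimension greater than two have zero $W^{1,2}$ capacity on noncollapsed RCD spaces, by volume doubling and standard cutoffs.

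For (iii), we combine (ii) with Lemma~\ref{lem:local-sections}. At any point of $\widehat U_0$, pass to a tangent cone $V$. Its regular part carries the flat Gaussian section of the cone's trivial line bundle. The capacity-zero cutoffs from (ii) allow us to transplant it with small $\bar\partial$-error on $\widehat U_0$ at scale $k^{-1/2}$. Weighted H\"ormander correction with constants independent of $k$ then gives a genuine holomorphic section of $L_U^k$ with peak at the chosen point. Point separation and gradient control follow in the same way from sections built using the cone coordinates, as in \cite[Theorem 18]{Szekelyhidi}.
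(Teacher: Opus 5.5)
Your outline of (i) and (iii) follows the route the paper cites: the cone exclusion of \cite[Proposition 10]{Szekelyhidi} and \cite[Lemmas 6.4--6.5]{FGS} with the charts of \cite{LiuSzekelyhidi}, and Gaussian sections corrected by the local $L^2$ estimate. Your proof of (ii), however, has a genuine gap.

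You try to show that $V\setminus\mathcal R_\epsilon(V)$ has codimension at least three. To do this, you realize an iterated cone $\mathbb R^{2n-2}\times C(S^1_\gamma)$ at $y\in V$ as a blow-up of $Z$ centered in $\Gamma$. The diagonal argument does give centers $y_i\to y$ in $Z$, but nothing places them in $\Gamma$, or even within distance $O(\text{scale})$ of $\Gamma$.

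\begin{itemize}
\item Closedness of $\Gamma$ only says that $M$ is open. It gives no lower bound on the size of smooth neighborhoods relative to the blow-up scale.
\item The density gap of Lemma~\ref{lem:local-density-gap} does not pass to the limit. Under noncollapsed pointed convergence one only has $\nu_V(y)\le\liminf_i\nu_Z(y_i)$, which is empty when $y_i\in M$.
\end{itemize}

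If $d(y_i,\Gamma)$ is large compared with the scale, the rescaled balls lie in $M$, and (i) says nothing about them. With only a lower Ricci bound, nothing in the hypotheses rules this case out. Smooth Kähler regions with $\ric(\omega)\ge-\omega$ can be Gromov--Hausdorff close, at small scales, to $\mathbb C^{n-1}\times C(S^1_\gamma)$. Think of a metric that smooths a cone angle along a divisor in $X_{\reg}$, with smoothing scale degenerating toward a singular point. Such cone points are analytically smooth, so the holomorphic contradiction of (i) is unavailable there.

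Relatedly, your remark in (i) that the density $\gamma/2\pi$ must equal $1$ overstates the argument. Degree one yields analytic smoothness of $\iota(z_k)$, and the contradiction comes only from $\iota^{-1}(X_{\reg})=M$.

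So the codimension-three claim is not established, and in this one-sided setting it should not be expected for tangent cones of $Z$. Statement (ii) only asks for zero capacity, and that survives codimension-two singularities. It does not follow from the covering argument you quote, which needs $\mathcal H^{2n-2}=0$. Codimension two is the borderline case: logarithmic cutoffs transverse to the top stratum, together with volume bounds for its tubular neighborhoods, give $\int|\nabla\chi|^2\to0$. This is what the paper's appeal to \cite[Lemma 6.7]{FGS} and the cutoff construction of \cite[Proposition 5.1]{LiuSzekelyhidi} supplies.

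Before invoking the stratification, you should also say why $V$ has no boundary. For instance, $\Gamma$ has zero capacity in $Z$ whereas a codimension-one boundary would not, and boundary is stable under noncollapsed convergence.

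Finally, your (iii) transplants sections using the cutoffs from (ii). It should therefore rest on this capacity statement rather than on codimension three; with that change, it goes through along the lines you describe.
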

\begin{proof}
For (i), we apply the cone exclusion arguments of
\cite[Proposition 10]{Szekelyhidi} and
\cite[Lemmas 6.4--6.5]{FGS} locally, using
Lemma~\ref{lem:local-density-gap} and the holomorphic charts
of \cite{LiuSzekelyhidi} under Ricci lower bounds.
Part (ii) is a localization of \cite[Lemma 6.7]{FGS},
using the cutoff construction of
\cite[Proposition 5.1]{LiuSzekelyhidi}.
Part (iii) follows from the construction of Gaussian sections in
\cite[Proposition 3.1]{LiuSzekelyhidi} and
\cite[Lemma 6.4 and Corollary 6.1]{FGS}, where
Lemma~\ref{lem:local-sections} provides the corrections in
$L_U^k$; the local formulation is also described in
\cite[Appendix A]{FGSW}.
\end{proof}

\begin{proof}[Proof of Corollary~\ref{cor:main}]
Applying the point separation argument of
\cite[Lemma 5.9 and Corollary 6.2]{FGS} to the local
sections of Lemma~\ref{lem:local-cones}(iii), we see that $\iota$
is injective, and hence a homeomorphism.
Lemma~\ref{lem:local-density-gap} identifies the regular locus
and shows that the singular set is closed.
The cone exclusion in Lemma~\ref{lem:local-cones}(i), combined
with the stratification of RCD spaces
\cite[Theorem 1.8]{DePhilippisGigli}, gives
\eqref{eq:cor-codim-three}, as in \cite[Lemma 6.5]{FGS}.
Under the additional Ricci upper bound, we apply the argument of
\cite[Lemma 6.6]{FGS} for Ricci-flat tangent cones,
using \cite{CCT} and \cite[Propositions 27--28]{Szekelyhidi}.
This excludes nonflat tangent cones splitting off a factor
$\mathbb R^{2n-3}$, and the stratification then gives
\eqref{eq:cor-codim-four}.
\end{proof}

\section{The polarized partial \texorpdfstring{$C^0$}{C0} estimate}
\label{sec:partialc0}

The proof follows \cite[Section 9]{FGS}, now in arbitrary dimension,
where Theorem~\ref{thm:main} and Corollary~\ref{cor:main} provide
the RCD structure and the identification of the regular locus.
We write $dV_\omega=\omega^n$ and denote by
$\mathscr P(n,D,v)$ the polarized class considered in
Theorem~\ref{thm:partialc0}.

\subsection{Uniform analytic estimates}
By RCD comparison geometry, the noncollapsing and Sobolev constants
are uniform and depend only on $n,D,v$. Moreover, the Sobolev
inequality remains uniform under the rescaling $\omega\mapsto k\omega$,
$k\ge1$; see also \cite{ZhangPartial}.
Consequently, the proofs of \cite[Lemmas 4.1--4.3]{FGS}, together
with the cutoff functions of zero capacity from
\cite[Lemma 8.2]{GPSS2}, give
\begin{equation}\label{eq:polarized-sections}
 \|s\|_{L^\infty(h^\ell)}
 +\|\nabla s\|_{L^\infty(h^\ell,\ell\omega)}
 \le C(n,D,v)\|s\|_\ell
 \quad(s\in H^0(X,L^\ell),\ \ell\ge1).
\end{equation}
The $L^2$ correction estimate is uniform for $\ell\ge2$, since
$\ell\omega+\ric(\omega)\ge\tfrac12\ell\omega$;
compare \cite[Theorem 18]{Szekelyhidi}.
The local lower bound for the metric needed for the extension of
sections is provided by \cite[Theorem 1.2]{CCHSTT}.

\subsection{The  peak section constructions}
We use the normalized volume density defined in
\eqref{eq:local-volume-density}. The following lemma is the uniform
version of the geometric input.

\begin{lemma}\label{lem:polarized-cones}
For the class $\mathscr P(n,D,v)$, the following statements hold.
\begin{enumerate}[label=(\roman*)]
\item There exists $\epsilon_n>0$ such that
$\nu_X(x)\le1-\epsilon_n$ at every $x\in X\setminus  X_{\reg}$.
\item Let $Z$ be a pointed noncollapsed limit of spaces
$(X_j,\sqrt{k_j}d_j,x_j)$, where
$(X_j,\omega_j,L_j,h_j)\in\mathscr P(n,D,v)$ and $k_j\ge1$.
Every tangent cone $V$ of $Z$, and every iterated tangent cone
of $V$, has an open regular locus carrying a smooth Ricci-flat
K\"ahler metric. Its closed singular set has locally zero
$W^{1,2}$ capacity. Compact subsets of that regular locus
are obtained, along a diagonal rescaling sequence, from
subsets of $X_j^\circ$ with convergence of the polarized
K\"ahler data sufficient for the peak section construction.
\end{enumerate}
\end{lemma}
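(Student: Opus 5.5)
The plan is to prove both parts by compactness and contradiction, transferring the fixed-space statements of Lemma~\ref{lem:local-density-gap} and Lemma~\ref{lem:local-cones} to the uniform class $\mathscr P(n,D,v)$.

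\emph{Uniform input.} By Theorem~\ref{thm:main} and Corollary~\ref{cor:main}, each member of $\mathscr P(n,D,v)$ has a completion that is a compact noncollapsed $\RCD(-1,2n)$ space homeomorphic to $X$, with $\mathcal R=X_{\reg}$. The diameter bound $D$ and volume bound $v$ make Bishop--Gromov volume ratios uniform. Rescalings by $\sqrt{k}$ with $k\ge1$ stay in $\RCD(-1,2n)$ with the same noncollapsing. Hence every pointed sequence has a noncollapsed pointed measured Gromov--Hausdorff limit, by Gromov precompactness together with De Philippis--Gigli volume convergence.

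\emph{Part (ii).} Let $Z$ be such a limit and $V$ a tangent cone, obtained by a diagonal rescaling sequence $(X_j,\sqrt{k_j'}d_j,x_j')$. Since $|\ric|\le1$ on $X_j^\circ$, the rescaled Ricci curvature tends to zero on the regular parts. I would first show that the set where the density exceeds $1-\epsilon$ is open and smooth. By Anderson/Cheeger--Colding $\epsilon$-regularity under two-sided Ricci bounds, points of $V$ with density close to $1$ have neighbourhoods approximated by harmonic-coordinate balls of $X_j^\circ$. On these balls one has $C^{1,\alpha}$, and by Einstein-type elliptic bootstrapping $C^\infty$, convergence of the metrics. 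The complex structures converge by compactness of $J$ in harmonic charts, and the hermitian connections of $(L_j^{k_j'},h_j^{k_j'})$ converge after gauge choices, since the curvature is $k_j'\omega_j$, which is bounded in rescaled norms. The limit metric is then K\"ahler and Ricci-flat there, and this is the polarized convergence needed for peak sections.

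\emph{Part (i).} Argue by contradiction. Take singular points $x_j\in X_j\setminus X_{j,\reg}$ with $\nu(x_j)\to1$, and blow up at scales where the volume ratio is still close to $1$. The limit would be a cone with density $1$, hence $\mathbb R^{2n}$ by volume rigidity. The $\epsilon$-regularity then forces $x_j$ to be metrically regular, which contradicts $\mathcal R(\overline X_j)=X_{j,\reg}$ with a uniform gap. The real content is a uniform version of the Sz\'ekelyhidi finite-order/three-annulus argument. Using the uniform section estimates \eqref{eq:polarized-sections} and holomorphic charts of \cite{LiuSzekelyhidi} on the limit, one builds holomorphic coordinates on balls of $X_j$ around $x_j$, contradicting singularity of $x_j$. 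The point is that the constants involve only $n,D,v$, and the klt and $L^p$ data enter only qualitatively through Corollary~\ref{cor:main}.

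\emph{Zero capacity.} The zero $W^{1,2}$ capacity of the singular set in (ii) follows from the codimension-four bound. Given part (i) and the Ricci-flat cone exclusion (the Cheeger--Colding--Tian argument of \cite{CCT}, as in Corollary~\ref{cor:main}), the singular set has codimension at least $4$, and cutoffs as in \cite[Proposition 5.1]{LiuSzekelyhidi} then give zero capacity. Iterated tangent cones are handled identically by a further diagonal argument.

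\textbf{Main obstacle.} The step I expect to be hardest is the uniformity in (i). On a fixed space the gap $\epsilon_X$ might depend on $X$, so excluding a degeneration where singular points become nearly Euclidean requires running the holomorphic chart construction in the limit and pulling it back, not merely citing Lemma~\ref{lem:local-density-gap}.
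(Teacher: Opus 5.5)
\textbf{Gap in part (i).} Your part (i) does not go through, and the failure is exactly the uniformity you flagged. The step ``$\epsilon$-regularity then forces $x_j$ to be metrically regular'' is false as a Riemannian statement. Anderson-type $\epsilon$-regularity needs the ball to lie in the smooth part. On spaces that are smooth with bounded Ricci curvature only off a singular set, density close to $1$ does not imply regularity until codimension-two singularities have been excluded: the flat K\"ahler cones $C(S^1_\gamma)\times\mathbb C^{n-1}$ with $\gamma\uparrow2\pi$ have vertex density $\gamma/2\pi\to1$.

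Your fallback, a ``uniform version'' of the finite-order/three-annulus argument, is asserted rather than proved, and its known input is not uniform. In Lemma~\ref{lem:local-density-gap} the finite-order estimate comes from the klt integrability and the $L^p$ bound of the given $X$. Neither is controlled on $\mathscr P(n,D,v)$, which is why that lemma only yields a constant $\epsilon_X$. A purely holomorphic route would need two things you do not supply: a Liu--Sz\'ekelyhidi chart theorem on singular K\"ahler spaces with constants depending only on $n$, and a degree-one argument making the chart injective at $x_j$.

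\textbf{The missing idea.} The uniform constant comes from Ricci-flat cone geometry, not from holomorphic geometry, and no sequence of spaces is needed for (i). The paper works on a fixed $X$ and $x\in X\setminus X_{\reg}$.
\begin{enumerate}
\item The fixed gap $\epsilon_X$ is used only qualitatively: it keeps analytic singular points out of nearly Euclidean balls.
\item Hence the two-sided Ricci bound makes blow-up limits smooth and Ricci-flat on their regular parts.
\item The cone exclusions of \cite{FGS} and \cite{CCT} remove codimension-two cones.
\item The induction on dimension of \cite[Theorem 36]{Szekelyhidi} then gives a density gap, depending only on $n$, for the resulting Ricci-flat cones other than $\mathbb R^{2n}$.
\end{enumerate}
Since $x$ is metrically singular by Corollary~\ref{cor:main}, its tangent cone is such a cone, so $\nu_X(x)\le1-\epsilon_n$.

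\textbf{Consequence for part (ii).} Your claim that points of $V$ with density near $1$ are approximated by harmonic-coordinate balls in $X_j^\circ$ requires that these balls contain no analytic singular points of $X_j$. It is precisely the uniform gap of (i), combined with Bishop--Gromov, that guarantees this. The paper therefore proves (i) first and feeds it into (ii). In your ordering, (ii) rests on a statement you have not yet established.
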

\begin{proof}
We adapt the induction on dimension in
\cite[proof of Theorem 36]{Szekelyhidi}, starting from
Lemma~\ref{lem:local-density-gap} and the cone exclusions in
\cite[Lemmas 5.14--5.16 and 6.6]{FGS}.
The two-sided Ricci bound ensures that the regular parts are smooth
and Ricci-flat after blow-up, while \cite{CCT} excludes nonflat
cones of codimension two. This gives the density gap in (i), which
depends only on the dimension.
The limiting cone argument of \cite[Section 9]{FGS}, together
with the volume convergence for noncollapsed spaces
\cite{DePhilippisGigli}, then gives (ii): the uniform gap keeps
analytic singularities away from the regular region, and the
stratification provides the cutoff functions of zero capacity.
These are precisely the ingredients needed for the polarized
convergence and transplantation arguments in
\cite[Section 3]{DonaldsonSun}.
\end{proof}

\subsection{A common power and all its multiples}
\begin{proof}[Proof of Theorem~\ref{thm:partialc0}]
We apply the peak section argument of \cite[Section 3]{DonaldsonSun}
in the singular form developed in \cite[Section 9]{FGS}, using
Lemma~\ref{lem:polarized-cones} and
\eqref{eq:polarized-sections}. The uniform Sobolev inequality for the
rescaled metrics allows us to apply the argument of
\cite{ZhangPartial} for all multiples:
there exist $Q\ge2$ and $\eta>0$, depending only on $n,D,v$, such
that for every $x$ and $k\ge1$, some $2\le q\le Q$ satisfies
$\rho_{qk,\omega}(x)\ge\eta$.
Taking tensor powers up to the common degree $Q!$ gives the
uniform lower bound for all of its multiples, while the upper bound
follows from \eqref{eq:polarized-sections}.
The construction of sections separating points and coordinates in
\cite[Section 4]{DonaldsonSun}, as used in
\cite[Section 9]{FGS}, gives a uniformly very ample power, and
taking a common multiple yields the required $m$.
Finally, integrating the upper bound for the Bergman kernel and
using the upper bound for the volume, we obtain the uniform ambient
dimension $N$.
\end{proof}

\section{Proofs of the fundamental-group and stability applications}
\label{sec:applications-proofs}

\subsection{Geometric stability and canonical distances}
\label{subsec:stability-proof}

Throughout this subsection, we use the uniform H\"older estimate from \cite{GSSHolder}.
The arguments below are adaptations to klt spaces of those in
\cite[Sections 3--6]{GSSStability}.

\begin{proof}[Proof of Theorem~\ref{thm:canonical-distance}]
We follow \cite[Lemmas 6.1--6.3]{GSSStability}.
The smooth approximation theorem
\cite[Corollary 2.5]{EGZApprox} gives
$f_j\in C^\infty(X)\cap\PSH(X,\lambda\theta)$ with
$f_j\searrow f_\omega$.
Using \cite{EGZ,DP}, we solve $\omega_j^n=a_je^{-f_j}\Omega$, where
the constants $a_j$ normalize the mass.
By dominated convergence, the densities satisfy common $L^p$ bounds
and converge in total variation.
The estimate \cite[Theorem 1.2]{CCHSTT} gives uniform lower bounds
for the metrics and their Ricci curvature, so that
Theorem~\ref{thm:main} applies.
By Theorem~\ref{thm:main}, Corollary~\ref{cor:main} and the
almost everywhere convexity of the regular set \cite{Deng}, the
comparison and segment arguments of
\cite[Lemmas 3.1, 4.1--4.2 and 5.1--5.4]{GSSStability} extend to
$ X_{\reg}$. Together with the uniform H\"older distance estimates and the
Bishop--Gromov lower volume bounds, they imply that the distances
are uniformly Cauchy. The uniform Ho\"der distance estimate 
identifies the topology and separates points.
The independence of the regularization follows by interleaving two
admissible sequences, as in \cite[Lemma 6.2]{GSSStability}, and the
comparison with a constant sequence gives compatibility with the
intrinsic distance whenever the latter is defined.
\end{proof}

\subsection{Almost-Abelian fundamental groups in every dimension}
\label{subsec:fundamental-proof}

\begin{proof}[Proof of Theorem~\ref{thm:fundamental}]
Since the case of curves is classical, we may assume $n\ge2$.
We follow the proof of \cite[Theorem 1.1 and Corollary 1.1]{FGSW}.
For $\Delta=0$, Theorem~\ref{thm:main} and
Corollary~\ref{cor:main} replace the three-dimensional RCD
input \cite[Proposition 2.1]{FGSW} in arbitrary dimension.
For klt boundaries, we use the regularization in
\cite[Section 2.2.2, Proposition 2.3]{FGSW}; for each fixed
$\eps>0$, the geometric stability argument in
Section~\ref{subsec:stability-proof} identifies the limit on $X$, and
Honda's criterion \cite{Honda} recovers the sharper
$\RCD(-\eps,2n)$ bound, as in that proposition.
The boundary perturbation of \cite[Lemma 2.10]{FGSW} then
gives $\RCD(-2\eps,2n)$ metrics in a fixed K\"ahler class
for the log canonical pair.

The uniform estimate for the lengths of generators and the RCD
Margulis argument \cite[Theorem 2.2 and Lemmas 2.11--2.12]{FGSW}
show that $\pi_1(X)$ is virtually nilpotent; we note that no diameter
bound uniform in $\eps$ is needed.
The surjectivity of the Albanese map on finite \emph{\'etale} covers,
together with the group theoretic argument in
\cite[Theorem 1.3, Section 3.5 and Appendix C]{FGSW},
then gives almost-Abelianity. None of these arguments depends on
the complex dimension.
\end{proof}

\begin{proof}[Proof of Corollary~\ref{cor:calabi-yau}]
Since $c_1(K_X+\Delta)=0$ implies that $-(K_X+\Delta)$ is nef, the
corollary follows from Theorem~\ref{thm:fundamental}; the
Calabi--Yau case corresponds to $\Delta=0$.
\end{proof}

\end{document}